\newtheorem{thm}{Theorem}[section]
 \newtheorem{cor}{Corollary}[section]
 \newtheorem{lem}{Lemma}[section]
 \newtheorem{prop}{Proposition}[section]
 \newtheorem{defn}{Definition}[section]%ÊÇ·ñÊÇÈ«ÎļÆÊý£¿
\theoremstyle{remark}
\newtheorem{rem}{Remark}[section]
\title{Relaxation Limit in Besov Spaces for Compressible Euler
Equations}
\author{Jiang Xu\thanks{E-mail: jiangxu\underline{ }79@yahoo.com.cn}, \ Zejun Wang\thanks{E-mail: wangzejun@gmail.com}\\
\small{\textit{Department of Mathematics}},
\\ \small{\textit{Nanjing
University of Aeronautics and Astronautics}}, \\
\small{\textit{Nanjing 211106, P.R.China}}\\[5mm]
}
\date{}
\begin{document}
\maketitle{}
\begin{abstract}
The relaxation limit in critical Besov spaces for the
multidimensional compressible Euler equations is considered. As the
first step of this justification, the uniform (global) classical
solutions to the Cauchy problem with initial data close to an
equilibrium state are constructed in the Chemin-Lerner's spaces with
critical regularity. Furthermore, it is shown that the density
converges towards the solution to the porous medium equation, as the
relaxation time tends to zero. Several important estimates are
achieved, including a crucial estimate of commutator.
\end{abstract}

\hspace{-0.5cm}\textbf{Keywords:}
\small{compressible Euler equations, classical solutions, relaxation limit, Chemin-Lerner's spaces}\\

\hspace{-0.5cm}\textbf{AMS subject classification:} \small{35L25,\
35L45,\ 76N15}

\section{Introduction and Main Results}
In a suitable nondimensional form, the multidimensional compressible
Euler equations for a polytropic fluid read as
\begin{equation}
\left\{
\begin{array}{l}
\partial_{t}\rho + \nabla\cdot(\rho\textbf{v}) = 0 , \\
\partial_{t}(\rho\textbf{v}) +\nabla\cdot(\rho\textbf{v}\otimes\textbf{v}) +
\nabla P =-\frac{\rho\textbf{v}}{\tau} .
\end{array} \right.\label{R-E1}
\end{equation}
Here $\rho = \rho(t, x)$ is the fluid density function of
$(t,x)\in[0,+\infty)\times\mathbb{R}^{d}$ with $d\geq2$;
$\textbf{v}=(v^1,v^2,\cdot\cdot\cdot,v^{d})^{\top}$($\top$
represents the transpose) denotes the fluid velocity. The pressure
$P=P(\rho)$ satisfies the usual $\gamma$-law:
$$
P(\rho)=A\rho^{\gamma}(\gamma\geq 1)
$$
where $A>0$ is some physical constant, the adiabatic exponent
$\gamma>1$ corresponds to the isentropic flow and $\gamma=1$
corresponds to the isothermal flow; $0<\tau\leq1$ is a (small)
relaxation time. The notation $\nabla,\otimes$ are the gradient
operator and the symbol for the tensor products of two vectors,
respectively.

In this paper, we are going to study the Cauchy problem of the
compressible Euler equations (\ref{R-E1}) subject to the initial
data
\begin{equation}
(\rho,\textbf{v})(0,x)=(\rho_{0},\textbf{v}_{0}).\label{R-E2}
\end{equation}

Our first interest is, for fixed $\tau>0$, to investigate the
relaxation effect on the regularity and large-time behavior of
classical solutions. As shown by \cite{STW,WY}, if the initial data
are small in some Sobolev space $H^{s}(\mathbb{R}^{d})$ with
$s>1+d/2$ ($s\in\mathbb{Z}$), the relaxation term which plays the
role of damping, can prevent the development of shock waves in
finite time and the Cauchy problem (\ref{R-E1})-(\ref{R-E2}) admits
a unique global classical solution. Furthermore, it is proved that
the solution in \cite{STW} has the $L^{\infty}$ convergence rate
$(1+t)^{-3/2}(d=3)$ to the constant background state and the optimal
$L^{p}(1<p\leq\infty)$ convergence rate $(1+t)^{-d/2(1-1/p)}$ in
general several dimensions \cite{WY}, respectively. For the
one-dimensional Euler equations with relaxation, the global
existence of a smooth solution with small data was proved by Nishida
\cite{N2}, and the asymptotic behavior of the smooth solution was
studied in many papers, see e.g. the excellent survey paper by
Dafermos \cite{D1} and the book by Hsiao \cite{H}. In addition, for
the large-time behavior of solutions with vacuum, see
\cite{HP2,HMP}.

Another main interest is to justify the singular limit as
$\tau\rightarrow0$ in (\ref{R-E1}). First, we look at the formal
process. To do this, we change the time variable by considering an
``$\mathcal{O}(1/\tau)$" time scale:
\begin{equation}(\rho^{\tau},\textbf{v}^{\tau})(s,x)=\Big(\rho,\textbf{v}\Big)\Big(\frac{s}{\tau},x\Big). \label{R-E3} \end{equation}
Then the new variables satisfy the following equations:
\begin{equation}
\left\{
\begin{array}{l}\partial_{s}\rho^{\tau}+\nabla\cdot(\frac{\rho^{\tau}\textbf{v}^{\tau}}{\tau})=0,\cr
 \tau^2\partial_{s}(\frac{\rho^{\tau}\textbf{v}^{\tau}}{\tau})+\tau^2\nabla\cdot(\frac{\rho^{\tau}\textbf{v}^{\tau}\otimes\textbf{v}^{\tau}}{\tau^2})+\frac{\rho^{\tau}\textbf{v}^{\tau}}{\tau}=-\nabla
 P(\rho^{\tau})
\end{array} \right.\label{R-E4}
\end{equation}
with initial data
\begin{equation}(\rho^{\tau},\textbf{v}^{\tau})(x,0)=(\rho_{0},
\textbf{v}_{0}).\label{R-E5}\end{equation} At the formal level, if
we can show that $\frac{\rho^\tau \mathbf{v}^\tau}{\tau}$ is
uniformly bounded, it is not difficult to see that the limit
$\mathcal{N}$ of $\rho^{\tau}$ as $\tau\rightarrow0$ satisfies the
porous medium equation
\begin{equation} \left\{
\begin{array}{l}\partial_{s}\mathcal{N}-\Delta P(\mathcal{N})=0,\\ \mathcal{N}(x,0)=\rho_{0}.
\end{array} \right.\label{R-E6}
\end{equation}
which is a parabolic equation since $P(\mathcal{N})$ is strictly
increasing.

This singular limit problems for hyperbolic relaxation to parabolic
equations have attracted much attention. By means of compensated
compactness theory, Marcati and his collaborators \cite{MM,MMS,MR}
systematically studied this diffusive limit of generally
quasi-linear hyperbolic system, also including the present Euler
equations (\ref{R-E1}) for weak solutions. When $\gamma=1$, Junca
and Rascle \cite{JR} verified the convergence of solutions to the
isothermal equations (\ref{R-E1}) towards the solution to the heat
equation for arbitrarily large initial data in $BV(\mathbb{R})$ that
are bounded away from the vacuum. Coulombel and Goudon \cite{CG}
fell back on the classical energy approach and constructed the
uniform smooth solutions to the isothermal Euler equations and
justified the relaxation limit in some Sobolev space
$H^{s}(\mathbb{R}^{d})(s>1+d/2,\ s\in \mathbb{Z})$ (in $x$).

In the present paper, we will improve Coulombel and Goudon's work
\cite{CG} such that the result may hold in the critical space with
the regularity index $\sigma=1+d/2$ (a larger space). Indeed, we
choose the critical Besov space $B^{\sigma}_{2,1}(\mathbb{R}^{d})$
in space-variable $x$ rather than $H^{\sigma}(\mathbb{R}^{d})$ as
the functional setting, since $B^{\sigma}_{2,1}(\mathbb{R}^{d})$ is
a subalgebra of $\mathcal{W}^{1,\infty}$. Starting from this simple
consideration, based on the Littlewood-Paley decomposition theory
and Bony's para-product formula, we first construct the (uniform)
global existence of classical solutions in the framework of the
Chemin-Lerner's spaces $\widetilde{L}^{\theta}_{T}(B^{s}_{p,r})$ in
\cite{C2}, which is a refinement of the usual spaces
$L^{\theta}_{T}(B^{s}_{p,r})$. Then, using Aubin-Lions compactness
lemma, we perform the relaxation limit of (\ref{R-E1})-(\ref{R-E2})
in Besov spaces.

Main results are stated as follows.
\begin{thm}\label{thm1.1}
Let $\bar{\rho}>0$ be a constant reference density. Suppose that \
$\rho_{0}-\bar{\rho}$ and $\mathbf{v}_{0}\in
B^{\sigma}_{2,1}(\mathbb{R}^{d})(\sigma=1+d/2)$, there exists a
positive constant $\delta_{0}$ independent of $\tau$ such that if
$$\|(\rho_{0}-\bar{\rho},\mathbf{v}_{0})\|_{B^{\sigma}_{2,1}(\mathbb{R}^{d})}\leq
\delta_{0},$$ then the Cauchy problem (\ref{R-E1})-(\ref{R-E2}) has
a unique global solution $(\rho,\mathbf{v})$ satisfying
\begin{eqnarray*}
(\rho,\mathbf{v})\in \mathcal{C}^{1}(\mathbb{R}^{+}\times
\mathbb{R}^{d})
\end{eqnarray*}and
\begin{eqnarray*}
(\rho-\bar{\rho},\mathbf{v}) \in
\widetilde{\mathcal{C}}(B^{\sigma}_{2,1}(\mathbb{R}^{d}))\cap
\widetilde{\mathcal{C}}^1(B^{\sigma-1}_{2,1}(\mathbb{R}^{d})).
\end{eqnarray*}
Furthermore, the uniform energy inequality holds\begin{eqnarray}
&&\|(\rho-\bar{\rho},\mathbf{v})\|_{\widetilde{L}^\infty(B^{\sigma}_{2,1}(\mathbb{R}^{d}))}
\nonumber\\&&+\lambda_{0}\Big\{\Big\|\frac{1}{\sqrt{\tau}}\mathbf{v}\Big\|_{\widetilde{L}^2(B^{\sigma}_{2,1}(\mathbb{R}^{d}))}
+\Big\|\sqrt{\tau}\nabla\rho\Big\|_{\widetilde{L}^2(B^{\sigma-1}_{2,1}(\mathbb{R}^{d}))}\Big\}
\nonumber\\&\leq& C_{0}\|(\rho_{0}-\bar{\rho},
\mathbf{v}_{0})\|_{B^{\sigma}_{2,1}(\mathbb{R}^{d})}\label{R-E7}
\end{eqnarray}
where $0<\tau\leq1$, $\lambda_{0}$ and $C_{0}$ are some uniform
positive constants independent of $\tau$.
\end{thm}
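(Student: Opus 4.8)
The plan is to prove Theorem~\ref{thm1.1} by a classical continuation (bootstrap) argument, combining a local existence result in the Chemin--Lerner framework with uniform \emph{a priori} estimates that are independent of the relaxation time $\tau$. First I would reformulate the system: set $n=\rho-\bar\rho$ and work with the unknowns $(n,\mathbf v)$, so that \eqref{R-E1} becomes a symmetrizable hyperbolic system with a linear damping term $-\mathbf v/\tau$ on the velocity equation and quasilinear perturbations that are at least quadratic in $(n,\mathbf v)$. Since $B^\sigma_{2,1}(\mathbb R^d)$ with $\sigma=1+d/2$ is an algebra embedded in $W^{1,\infty}$, the standard Littlewood--Paley machinery (Bony's paraproduct decomposition, the product and composition estimates in Besov spaces, and the commutator estimates alluded to in the introduction) applies to control all nonlinear terms. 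Local-in-time existence and uniqueness of a solution $(n,\mathbf v)\in\widetilde{\mathcal C}_T(B^\sigma_{2,1})\cap\widetilde{\mathcal C}^1_T(B^{\sigma-1}_{2,1})$ for small data follows from a now-routine iteration scheme in these spaces, yielding a maximal existence time $T^\ast=T^\ast(\tau)$; the heart of the matter is to show $T^\ast=+\infty$ with norms controlled uniformly in $\tau$.

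The key step is the uniform \emph{a priori} estimate \eqref{R-E7}. I would apply the dyadic blocks $\Delta_q$ to the symmetrized system, producing for each $q$ a frequency-localized system with a commutator source term $[\,\mathbf v\cdot\nabla,\Delta_q]$ and similar commutators coming from the variable coefficients multiplying the transport and $\nabla$ terms. Taking the $L^2$ inner product against the block $(\Delta_q n,\Delta_q\mathbf v)$ (weighted by the symmetrizer) gives the basic energy balance; the damping term contributes $+\tfrac1\tau\|\Delta_q\mathbf v\|_{L^2}^2$, which explains the weight $1/\sqrt\tau$ on $\mathbf v$ in \eqref{R-E7}. The dissipation on the density is \emph{not} visible at this level and must be extracted by a standard ``cross'' argument: using the equations, one forms $\frac{d}{dt}\langle\Delta_q\mathbf v,\nabla\Delta_q n\rangle$ and, after integration by parts and using the mass equation, recovers a positive term of size $\|\nabla\Delta_q n\|_{L^2}^2$ modulo terms absorbable by the velocity dissipation and by nonlinear remainders; this is where the weight $\sqrt\tau$ on $\nabla\rho$ comes from (one must track $\tau$-powers carefully so that the cross term itself is a lower-order perturbation of the energy). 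Summing the resulting differential inequalities after multiplication by $2^{q\sigma}$ (and $2^{q(\sigma-1)}$ for the gradient term) and taking $\ell^1$ in $q$ produces a closed inequality of the schematic form
\begin{equation*}
\frac{d}{dt}\mathcal E(t)+\lambda_0 \mathcal D(t)\le C\,\|(n,\mathbf v)(t)\|_{B^\sigma_{2,1}}\,\mathcal D(t),
\end{equation*}
where $\mathcal E$ is equivalent to $\|(n,\mathbf v)\|_{B^\sigma_{2,1}}^2$ and $\mathcal D$ collects the two dissipation terms; integrating in $t$ and invoking Chemin--Lerner time-integrability gives \eqref{R-E7} as long as the solution stays in a small ball.

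The continuation argument then closes as follows: choose $\delta_0$ so small that $C\,\delta_0<\lambda_0/2$; if $\|(n_0,\mathbf v_0)\|_{B^\sigma_{2,1}}\le\delta_0$, the \emph{a priori} estimate shows that on any interval of existence the solution satisfies $\|(n,\mathbf v)(t)\|_{B^\sigma_{2,1}}\le C_0\delta_0$ with $C_0$ independent of $\tau$ and of $T$; a standard bootstrap (shrinking $\delta_0$ once more so that $C_0\delta_0$ lies in the local-existence ball) rules out finite-time blow-up of the Besov norm, hence $T^\ast=+\infty$. Finally, the classical regularity $(\rho,\mathbf v)\in\mathcal C^1(\mathbb R^+\times\mathbb R^d)$ follows from the embedding $B^\sigma_{2,1}\hookrightarrow W^{1,\infty}$ applied to $(n,\mathbf v)\in\widetilde{\mathcal C}(B^\sigma_{2,1})$ together with $(\partial_t n,\partial_t\mathbf v)\in\widetilde{\mathcal C}(B^{\sigma-1}_{2,1})\hookrightarrow\widetilde{\mathcal C}(L^\infty)$. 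I expect the main obstacle to be the bookkeeping of $\tau$-powers in the cross estimate for the density dissipation: the term $\langle\Delta_q\mathbf v,\nabla\Delta_q n\rangle$ must be scaled (by an appropriate power of $\tau$) so that it is simultaneously dominated by the energy $\mathcal E$ and produces the claimed $\sqrt\tau\,\nabla\rho$ dissipation without leaving behind any $\tau$-singular remainder; handling the variable-coefficient commutators uniformly in $\tau$ is the crucial estimate advertised in the abstract.
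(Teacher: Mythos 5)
Your overall architecture — symmetrize, localize with $\Delta_q$, take $L^2$ inner products to extract the $\frac{1}{\tau}\|\Delta_q\mathbf v\|_{L^2}^2$ dissipation, recover dissipation on $\nabla\rho$ by a cross term involving $\langle\Delta_q\mathbf v,\nabla\Delta_q n\rangle$, then bootstrap — is the same skeleton as the paper's Proposition~\ref{prop4.1}. Two things deserve comment. First, a minor route difference: you work with $n=\rho-\bar\rho$ and invoke a symmetrizer, whereas the paper substitutes the sound speed $\varrho=\frac{2}{\gamma-1}(\psi(\rho)-\bar\psi)$ to put \eqref{R-E1} into the \emph{genuinely} symmetric form \eqref{R-E8}; this removes the variable symmetrizer from the energy and is essential for the clean $\tau$-bookkeeping you correctly flag as the main risk.

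The real gap is the low-frequency block $q=-1$, which you do not address and which the paper explicitly identifies as the key obstruction (see Remark~1.1: ``the low-frequency estimate of density\dots is absent''). Your schematic inequality $\frac{d}{dt}\mathcal E+\lambda_0\mathcal D\le C\|(n,\mathbf v)\|_{B^\sigma_{2,1}}\mathcal D$ relies on every nonlinear remainder being absorbable by $\mathcal D\sim\frac1\tau\|\mathbf v\|^2+\tau\|\nabla n\|^2$. For $q\ge0$ one has $\|\nabla\Delta_q n\|_{L^2}\approx 2^q\|\Delta_q n\|_{L^2}$, so quadratic remainders involving $\Delta_q n$ can be traded for the gradient dissipation. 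At $q=-1$ this equivalence fails, and the standard $L^2$-commutator estimate leaves behind factors of $\|\Delta_{-1}\varrho\|_{L^2}$ that are controlled by neither $\mathcal D$ nor (in the time-integrated $\widetilde L^2_T$ sense) by the energy. The paper's fix — which is the ``crucial commutator estimate'' advertised in the abstract — is to run the $q=-1$ estimate in the off-diagonal scale $L^{2d/(d+2)}\times L^{2d/(d-2)}$ and use Gagliardo--Nirenberg--Sobolev to convert $\|\Delta_{-1}\varrho\|_{L^{2d/(d-2)}}$ into $\|\Delta_{-1}\nabla\varrho\|_{L^2}$, together with the new commutator bounds of Proposition~\ref{prop6.1}, Corollary~\ref{cor6.1} and Corollary~\ref{cor6.2} proved in the Appendix (see display \eqref{R-E14}--\eqref{R-E16}). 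Without this step your inequality does not close, so the argument as you have sketched it is incomplete precisely at the point the authors claim is the novelty over the earlier $H^s$ result of Coulombel--Goudon.
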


\begin{rem}
In comparison with that in \cite{CG}, Theorem \ref{thm1.1} depends
on the low- and high-frequency decomposition methods rather than the
classical energy approach. As shown by ourselves \cite{FX}, the
low-frequency estimate of density for the Euler equations
(\ref{R-E1}) is absent. Then, we overcame the difficulty by using
Gagliardo-Nirenberg-Sobolev inequality (see, e.g., \cite{E}) to
obtain a global classical solution, however, the result fails to
hold in the critical Besov spaces mentioned above. To obtain the
desired result, in the current paper, we add the new context in the
proof of global existence. Indeed, some frequency-localization
estimates in Chemin-Lerner's spaces are developed, including a
crucial estimate of commutator, for details, see Proposition
\ref{prop4.1}, Proposition \ref{prop6.1} and Corollary \ref{cor6.2}.
\end{rem}

Based on Theorem \ref{thm1.1}, using the standard weak convergence
method and Aubin-Lions compactness lemma in \cite{S}, we further
obtain the relaxation limit of (\ref{R-E1})-(\ref{R-E2}) in the
larger framework of Besov spaces.
\begin{thm}\label{thm1.2} Let $(\rho,\mathbf{v})$ be the global solution of Theorem
\ref{thm1.1}. Then
$$\rho^{\tau}-\bar{\rho}\ \ \ \mbox{is uniformly bounded in}\ \ \mathcal{C}(\mathbb{R}^{+},B^{\sigma}_{2,1}(\mathbb{R}^{d}));$$
$$\frac{\rho^{\tau}\mathbf{v}^{\tau}}{\tau}\ \ \ \mbox{is uniformly bounded in}\ \ L^2(\mathbb{R}^{+},B^{\sigma}_{2,1}(\mathbb{R}^{d})).$$
Further, there exists some function $\mathcal{N}\in
\mathcal{C}(\mathbb{R}^{+},
\bar{n}+B^{\sigma}_{2,1}(\mathbb{R}^{d}))$ which is a global weak
solution of (\ref{R-E6}). For any $0<T,R<\infty$,
$\{\rho^{\tau}(s,x)\}$ strongly converges to $\mathcal{N}(s,x)$ in
 $\mathcal{C}([0,T],
(B^{\sigma-\delta}_{2,1}(B_{r}))$ as  $\tau\rightarrow0$, where
$\delta\in(0,1)$ and $B_{r}$ denotes the ball of radius $r$ in
$\mathbb{R}^{d}$. In addition, it holds that
\begin{eqnarray}
\|(\mathcal{N}(s,\cdot)-\bar{\rho}\|_{B^{\sigma}_{2,1}(\mathbb{R}^{d})}\leq
C'_{0}\|(\rho_{0}-\bar{\rho},
\mathbf{v}_{0})\|_{B^{\sigma}_{2,1}(\mathbb{R}^{d})},\ s\geq0,
\label{R-E1000}
\end{eqnarray}
where $C'_{0}>0$ is a uniform constant independent of $\tau$.
\end{thm}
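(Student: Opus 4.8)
The plan is to read off all the assertions from the uniform estimate (\ref{R-E7}) of Theorem \ref{thm1.1} after undoing the time rescaling (\ref{R-E3}), and then to pass to the limit $\tau\to0$ by a compactness argument. First I would write $(\rho^{\tau},\mathbf{v}^{\tau})(s,x)=(\rho,\mathbf{v})(s/\tau,x)$; the change of variable $t=s/\tau$ turns $\|(\rho-\bar{\rho},\mathbf{v})\|_{\widetilde{L}^{\infty}(B^{\sigma}_{2,1})}$ into $\|\rho^{\tau}-\bar{\rho}\|_{\widetilde{L}^{\infty}(\mathbb{R}^{+},B^{\sigma}_{2,1})}$ (hence continuity in $s$ and a uniform bound, the $\widetilde{\mathcal C}$ regularity of Theorem \ref{thm1.1} being preserved), and, crucially, $\|\tau^{-1/2}\mathbf{v}\|_{\widetilde{L}^{2}(B^{\sigma}_{2,1})}$ into $\|\tau^{-1}\mathbf{v}^{\tau}\|_{\widetilde{L}^{2}(\mathbb{R}^{+},B^{\sigma}_{2,1})}$; both are bounded by $C\delta_{0}$ uniformly in $\tau$. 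Since $B^{\sigma}_{2,1}(\mathbb{R}^{d})$ is an algebra, the splitting $\rho^{\tau}\mathbf{v}^{\tau}/\tau=(\rho^{\tau}-\bar{\rho})(\mathbf{v}^{\tau}/\tau)+\bar{\rho}(\mathbf{v}^{\tau}/\tau)$ shows that $\rho^{\tau}\mathbf{v}^{\tau}/\tau$ is uniformly bounded in $L^{2}(\mathbb{R}^{+},B^{\sigma}_{2,1})$, which already yields the first two displayed statements.

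Next I would set up the compactness. The mass equation in (\ref{R-E4}) reads $\partial_{s}\rho^{\tau}=-\nabla\cdot(\rho^{\tau}\mathbf{v}^{\tau}/\tau)$, so $\partial_{s}\rho^{\tau}$ is uniformly bounded in $L^{2}(\mathbb{R}^{+},B^{\sigma-1}_{2,1})$. On any ball $B_{r}$ the embedding $B^{\sigma}_{2,1}(B_{r})\hookrightarrow\hookrightarrow B^{\sigma-\delta}_{2,1}(B_{r})$ is compact, so the Aubin--Lions lemma of \cite{S} provides, for each $T$ and $r$, a subsequence with $\rho^{\tau}\to\mathcal{N}$ strongly in $\mathcal{C}([0,T],B^{\sigma-\delta}_{2,1}(B_{r}))$; a diagonal extraction makes this independent of $T,r$. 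Along a further subsequence $\rho^{\tau}\mathbf{v}^{\tau}/\tau\rightharpoonup\mathbf{m}$ weakly in $L^{2}(\mathbb{R}^{+},B^{\sigma}_{2,1})$, while $\mathbf{v}^{\tau}=\tau(\mathbf{v}^{\tau}/\tau)\to0$ strongly in $L^{2}(\mathbb{R}^{+},B^{\sigma}_{2,1})$.

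Then I would identify $\mathbf{m}$ and $\mathcal{N}$. Rewriting the momentum equation of (\ref{R-E4}) as $\rho^{\tau}\mathbf{v}^{\tau}/\tau=-\nabla P(\rho^{\tau})-\tau^{2}\partial_{s}(\rho^{\tau}\mathbf{v}^{\tau}/\tau)-\nabla\cdot(\rho^{\tau}\mathbf{v}^{\tau}\otimes\mathbf{v}^{\tau})$, the term $\tau^{2}\partial_{s}(\rho^{\tau}\mathbf{v}^{\tau}/\tau)\to0$ in $\mathcal{D}'$ because $\rho^{\tau}\mathbf{v}^{\tau}/\tau$ is bounded in $L^{2}(\mathbb{R}^{+},B^{\sigma}_{2,1})$, and $\rho^{\tau}\mathbf{v}^{\tau}\otimes\mathbf{v}^{\tau}=\tau^{2}\rho^{\tau}(\mathbf{v}^{\tau}/\tau)\otimes(\mathbf{v}^{\tau}/\tau)$ is $O(\tau^{2})$ in $L^{1}(\mathbb{R}^{+},B^{\sigma}_{2,1})$ by the algebra property, hence also $\to0$. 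The local strong convergence of $\rho^{\tau}$ (which stays bounded away from vacuum and in $B^{\sigma}_{2,1}$), together with composition estimates for $P(\rho)=A\rho^{\gamma}$, gives $P(\rho^{\tau})\to P(\mathcal{N})$, so $\mathbf{m}=-\nabla P(\mathcal{N})$. Passing to the limit in $\partial_{s}\rho^{\tau}+\nabla\cdot(\rho^{\tau}\mathbf{v}^{\tau}/\tau)=0$ then yields $\partial_{s}\mathcal{N}-\Delta P(\mathcal{N})=0$ in $\mathcal{D}'$ with $\mathcal{N}(0,\cdot)=\rho_{0}$, i.e., $\mathcal{N}$ is a global weak solution of (\ref{R-E6}) (here $\bar{n}=\bar{\rho}$). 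Finally, the uniform bound $\|\rho^{\tau}(s,\cdot)-\bar{\rho}\|_{B^{\sigma}_{2,1}}\le C_{0}\|(\rho_{0}-\bar{\rho},\mathbf{v}_{0})\|_{B^{\sigma}_{2,1}}$ and weak-$*$ lower semicontinuity of the $B^{\sigma}_{2,1}$ norm give (\ref{R-E1000}).

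The hard part will be the compactness step: Besov spaces on $\mathbb{R}^{d}$ are not compactly embedded, so the Aubin--Lions argument must be localized on balls $B_{r}$ and then globalized by a diagonal procedure, which is precisely why the strong convergence is obtained only in the weaker scale $B^{\sigma-\delta}_{2,1}(B_{r})$ — nonetheless enough to pass to the limit in $P(\rho^{\tau})$ and $\nabla P(\rho^{\tau})$. A secondary difficulty is that a priori all convergences hold only along subsequences, so to obtain convergence of the whole family one invokes uniqueness of the weak solution of (\ref{R-E6}) in this regularity class; one must also check that the product and composition estimates are uniform in $\tau$, so that the $\tau^{2}$ prefactors genuinely annihilate the inertial terms in the limit.
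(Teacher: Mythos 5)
Your proposal follows the paper's argument essentially step for step: read off the uniform bounds from (\ref{R-E7}) after undoing the time rescaling, use the algebra property of $B^{\sigma}_{2,1}$ to bound $\rho^{\tau}\mathbf{v}^{\tau}/\tau$, bound $\partial_{s}\rho^{\tau}$ via the mass equation, apply Proposition \ref{prop2.3} together with the Aubin--Lions lemma of \cite{S} for local strong convergence, and pass to the limit in (\ref{R-E4}) noting the $\tau^{2}$ prefactors annihilate the inertial terms. The added detail on diagonal extraction, identification of $\mathbf{m}=-\nabla P(\mathcal{N})$, and the lower-semicontinuity argument for (\ref{R-E1000}) is just a fuller rendering of what the paper sketches, not a different route.
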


\begin{rem}
Compared with that in \cite{CG}, the relaxation convergence of
classical solutions holds in the Besov spaces with relatively
\textit{lower} regularity.  To the best of our knowledge, this is
the first result for the Euler equations (\ref{R-E1}) in this
direction. Therefore, Theorem \ref{thm1.2}  gives a rigorous
description that the porous medium equation is usually regarded as
an appropriate model for compressible inviscid fluids. In addition,
let us also mention that the limit result is generalized to be true
for general adiabatic exponent $\gamma\geq1$ but not the only case
$\gamma=1$ in \cite{CG}.
\end{rem}

The paper is organized as follows. In Section 2, we briefly review
the Littlewood-Paley decomposition theory and the characterization
of Besov spaces and Chemin-Lerner's spaces. In Section 3, we
reformulate the equations (\ref{R-E1}) as a symmetric hyperbolic
form in order to obtain the effective frequency-localization
estimate and present a local existence result for classical
solutions. In Section 4, using the high- and low-frequency
decomposition methods, we deduce the frequency-localization estimate
in Chemin-Lerner's spaces, which is used to achieve the global
existence of uniform classical solutions. Section 5 is devoted to
justify the relaxation limit for the Euler equations (\ref{R-E1}).
Finally, the paper ends with an appendix, where we give the proof of
estimates of commutator.

\textbf{Notations}. Throughout this paper, $C>0$ is a generic
constant independent of $\tau$. Denote by $\mathcal{C}([0,T],X)$
(resp., $\mathcal{C}^{1}([0,T],X)$) the space of continuous (resp.,
continuously differentiable) functions on $[0,T]$ with values in a
Banach space $X$. We often label
$\|(a,b,c,d)\|_{X}=\|a\|_{X}+\|b\|_{X}+\|c\|_{X}+\|d\|_{X}$, where
$a,b,c,d\in X$. Here and below, we omit the space dependence for
simplicity, since all functional spaces are considered in
$\mathbb{R}^{d}$. Moreover, the integral $\int_{\mathbb{R}^{d}}fdx$
is labeled as $\int f$ without any ambiguity.

\section{Preliminary}
\setcounter{equation}{0}

For convenience of reader, we try to make the context
self-contained, in this section, we briefly review the
Littlewood-Paley decomposition theory and some properties of Besov
spaces and Chemin-Lerner's spaces. For more details, the reader is
referred to \cite{BCD,D}.

Let $(\varphi, \chi)$ be a couple of smooth functions valued in $[0,
1]$ such that $\varphi$ is supported in the shell
$\textbf{C}(0,\frac{3}{4},\frac{8}{3})
=\{\xi\in\mathbb{R}^{d}|\frac{3}{4}\leq|\xi|\leq\frac{8}{3}\}$,
$\chi$ is supported in the ball $\textbf{B}(0,\frac{4}{3})=
\{\xi\in\mathbb{R}^{d}||\xi|\leq\frac{4}{3}\}$ and
$$
\chi(\xi)+\sum_{q=0}^{\infty}\varphi(2^{-q}\xi)=1,\ \ \
\xi\in\mathbb{R}^{d}.
$$
Let $\mathcal{S'}$ be the dual space of the Schwartz class
$\mathcal{S}$. For $f\in\mathcal{S'}$, the nonhomogeneous dyadic
blocks are defined as follows:
$$
\Delta_{-1}f:=\chi(D)f=\tilde{\omega}\ast f\ \ \ \mbox{with}\ \
\tilde{\omega}=\mathcal{F}^{-1}\chi;
$$
$$
\Delta_{q}f:=\varphi(2^{-q}D)f=2^{qd}\int \omega(2^{q}y)f(x-y)dy\ \
\ \mbox{with}\ \ \omega=\mathcal{F}^{-1}\varphi,\ \ \mbox{if}\ \
q\geq0,
$$
where $\ast$ the convolution operator and $\mathcal{F}^{-1}$ the
inverse Fourier transform. The nonhomogeneous Littlewood-Paley
decomposition is
$$
f=\sum_{q \geq-1}\Delta_{q}f \qquad \forall f\in \mathcal{S'}.
$$
Define the low frequency cut-off by
$$S_{q}f:=\sum_{p\leq q-1}\Delta_{p}f.$$ Of course, $S_{0}f=\Delta_{-1}f$. Moreover, the above
Littlewood-Paley decomposition is almost orthogonal in $L^2$.
\begin{prop}\label{prop2.1}
For any $f\in\mathcal{S'}(\mathbb{R}^{d})$ and
$g\in\mathcal{S'}(\mathbb{R}^{d})$, the following properties hold:
$$\Delta_{p}\Delta_{q}f\equiv 0 \ \ \ \mbox{if}\ \ \ |p-q|\geq 2,$$
$$\Delta_{q}(S_{p-1}f\Delta_{p}g)\equiv 0\ \ \ \mbox{if}\ \ \ |p-q|\geq 5.$$
\end{prop}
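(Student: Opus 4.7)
The plan is to prove both almost-orthogonality statements directly on the Fourier side, exploiting that each dyadic block $\Delta_q$ is a Fourier multiplier whose symbol is compactly supported in a shell (or ball, when $q=-1$) whose scale is controlled by $2^q$.

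For the first identity, $\Delta_p\Delta_q$ corresponds to multiplication by the product of two localization symbols. For $p,q \geq 0$, these supports lie in the shells $\{(3/4)2^{p} \leq |\xi| \leq (8/3)2^{p}\}$ and $\{(3/4)2^{q} \leq |\xi| \leq (8/3)2^{q}\}$. Assuming $p > q$, these intersect iff $(3/4)2^{p} \leq (8/3)2^{q}$, i.e., $2^{p-q} \leq 32/9 < 4$, forcing $p-q \leq 1$; hence $|p-q| \geq 2$ makes the symbols disjointly supported and the composition vanishes identically. The mixed case $q=-1$ reduces to $\mathrm{supp}\,\chi \cap \mathrm{supp}\,\varphi(2^{-p}\cdot) = \emptyset$, which holds as soon as $(3/4)2^{p} > 4/3$, i.e., $p \geq 1$; this is precisely $|p-(-1)| \geq 2$.

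For the second identity I would first use the telescoping relation $\chi(\xi) + \sum_{p'=0}^{q-1}\varphi(2^{-p'}\xi) = \chi(2^{-q}\xi)$ (a consequence of the partition of unity) to identify $S_q f = \chi(2^{-q}D)f$. Hence $S_{p-1}f$ has spectrum in the ball $\{|\xi|\leq(2/3)2^{p}\}$, while $\Delta_p g$ has spectrum in the shell $\{(3/4)2^{p}\leq|\xi|\leq(8/3)2^{p}\}$. Since multiplication in $x$ becomes convolution in $\xi$, the spectrum of $S_{p-1}f\cdot\Delta_p g$ lies in the Minkowski sum of these two sets, and the triangle inequality yields
\[
\mathrm{supp}\,\mathcal{F}\bigl(S_{p-1}f\,\Delta_{p}g\bigr) \subset \bigl\{(1/12)2^{p} \leq |\xi| \leq (10/3)2^{p}\bigr\}.
\]
For $\Delta_q$ with $q \geq 0$ to annihilate this product, the shell of $\varphi(2^{-q}\cdot)$ must miss the annulus above, which occurs when either $(8/3)2^{q} \leq (1/12)2^{p}$ (giving $p-q \geq 5$) or $(3/4)2^{q} \geq (10/3)2^{p}$ (giving $q-p \geq 3$); both are implied by $|p-q|\geq 5$. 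The case $q=-1$ requires $\{|\xi|\leq 4/3\}$ to be disjoint from the annulus, amounting to $2^{p}\geq 16$, again consistent with $|p+1|\geq 5$.

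Overall the argument is just careful bookkeeping with shell radii and presents no real analytic difficulty. The only mildly subtle point is the boundary case $p-q=5$, where the two relevant shells touch on a single sphere; this is harmless because $\varphi$ vanishes on the boundary of its support by continuity, so the pointwise product of the symbols still vanishes identically.
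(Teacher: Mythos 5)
The paper itself offers no proof of Proposition \ref{prop2.1} (it is quoted as standard from \cite{BCD,D}), and your spectral-support argument is exactly the standard one: the radius bookkeeping for the first identity is correct (disjointness for $p-q\geq 2$ is strict, and the $q=-1$ case is right), the telescoping identity $S_qf=\chi(2^{-q}D)f$ does follow from the stated partition of unity by rescaling and subtracting, and the Minkowski-sum localization $\mathrm{supp}\,\mathcal{F}(S_{p-1}f\Delta_pg)\subset\{\tfrac{1}{12}2^{p}\leq|\xi|\leq\tfrac{10}{3}2^{p}\}$ is the same annulus used in the references.

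The one point that is not yet watertight is precisely the boundary case you flagged, and your repair is too weak as stated. When $p-q=5$ (and likewise in the second identity with $q=-1$, $p=4$, where the ball $\{|\xi|\leq 4/3\}$ touches the annulus at $|\xi|=4/3$ --- note your ``$2^{p}\geq16$'' silently allows this equality), the two supports meet on a sphere, and the object you are multiplying by $\varphi(2^{-q}\cdot)$ is not a function but the distribution $\mathcal{F}(S_{p-1}f\Delta_pg)$; mere pointwise vanishing of the smooth factor on the support of a distribution does not kill the product (e.g.\ $x\,\delta'=-\delta\neq0$). The correct fix: since $\varphi$ (resp.\ $\chi$) is smooth and supported in the closed shell (resp.\ ball), \emph{all} its derivatives vanish on the boundary sphere, hence $\varphi(2^{-q}\cdot)$ vanishes to infinite order on the whole support of $\mathcal{F}(S_{p-1}f\Delta_pg)$, which is a compactly supported distribution and therefore of finite order; a smooth function vanishing together with derivatives up to that order on the support annihilates it. With that (or by simply noting strict disjointness already for $p-q\geq6$, $q-p\geq3$ and treating only the touching cases this way), your proof is complete and coincides with the argument in the cited references.
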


Having defined the linear operators $\Delta_q (q\geq -1)$, we give
the definition of Besov spaces and Bony's decomposition.
\begin{defn}\label{defn2.1}
Let $1\leq p\leq\infty$ and $s\in \mathbb{R}$. For $1\leq r<\infty$,
Besov spaces $B^{s}_{p,r}\subset \mathcal{S'}$ are defined by
$$
f\in B^{s}_{p,r} \Leftrightarrow \|f\|_{B^s_{p, r}}=:
\Big(\sum_{q\geq-1}(2^{qs}\|\Delta_{q}f\|_{L^{p}})^{r}\Big)^{\frac{1}{r}}<\infty
$$
and $B^{s}_{p,\infty}\subset \mathcal{S'}$ are defined by
$$
f\in B^{s}_{p,\infty} \Leftrightarrow \|f\|_{B^s_{p, \infty}}=:
\sup_{q\geq-1}2^{qs}\|\Delta_{q}f\|_{L^{p}}<\infty.
$$
\end{defn}

\begin{defn}\label{defn2.2}
Let $f,g $ be two temperate distributions. The product $f\cdot g$
has the Bony's decomposition:
$$f\cdot g=T_{f}g+T_{g}f+R(f,g), $$
where $T_{f}g$ is paraproduct of $g$ by $f$,
$$ T_{f}g=\sum_{p\leq q-2}\Delta_{p}f\Delta_{q}g=\sum_{q}S_{q-1}f\Delta_{q}v$$
and the remainder $ R(f,g)$ is denoted by
$$R(f,g)=\sum_{q}\Delta_{q}f\tilde{\Delta}_{q}g\ \ \ \mbox{with} \ \
\tilde{\Delta}_{q}:=\Delta_{q-1}+\Delta_{q}+\Delta_{q+1}.$$
\end{defn}

As regards the remainder of paraproduct, we have the following
result.

\begin{prop} \label{prop2.2}
Let $(s_{1},s_{2})\in \mathbb{R}^2$ and $1\leq
p,p_{1},p_{2},r,r_{1},r_{2}\leq\infty$. Assume that
$$\frac{1}{p}\leq\frac{1}{p_{1}}+\frac{1}{p_{2}}\leq 1,\ \ \frac{1}{r}\leq\frac{1}{r_{1}}+\frac{1}{r_{2}},\ \ \mbox{and}\ \ s_{1}+s_{2}>0.$$
Then the remainder $R$ maps $B^{s_{1}}_{p_{1},r_{1}}\times
B^{s_{2}}_{p_{2},r_{2}}$ in
$B^{s_{1}+s_{2}+d(\frac{1}{p}-\frac{1}{p_{1}}-\frac{1}{p_{2}})}_{p,r}$
and there exists a constant $C$ such that
$$\|R(f,g)\|_{B^{s_{1}+s_{2}+d(\frac{1}{p}-\frac{1}{p_{1}}-\frac{1}{p_{2}})}_{p,r}}\leq \frac{C^{|s_{1}+s_{2}|+1}}{s_{1}+s_{2}}\|f\|_{B^{s_{1}}_{p_{1},r_{1}}}\|g\|_{B^{s_{2}}_{p_{2},r_{2}}}.$$
\end{prop}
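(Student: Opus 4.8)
The plan is to run the standard Littlewood--Paley argument for the remainder, the essential point being the \emph{diagonal} structure of $R$. First I would note that for each fixed $q$ the spectrum of the block $\Delta_q f\,\tilde\Delta_q g$ is contained in a ball $\textbf{B}(0,C_0 2^q)$ with $C_0$ an absolute constant, because the spectra of $\Delta_q f$ and $\tilde\Delta_q g$ both lie in balls of radius $\lesssim 2^q$ and Fourier supports add under pointwise products. Hence there is a fixed integer $N_0=N_0(C_0)$ such that $\Delta_j\big(\Delta_q f\,\tilde\Delta_q g\big)\equiv0$ whenever $q<j-N_0$, and therefore
$$\Delta_j R(f,g)=\sum_{q\geq j-N_0}\Delta_j\big(\Delta_q f\,\tilde\Delta_q g\big).$$
The decisive feature is that there is \emph{no} upper restriction on $q$; this is precisely why the hypothesis $s_1+s_2>0$ will be indispensable for the summation.

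Next I would estimate a single block. Put $\frac1{\tilde p}:=\frac1{p_1}+\frac1{p_2}$, which by hypothesis satisfies $1\leq\tilde p\leq p$. Hölder's inequality gives $\|\Delta_q f\,\tilde\Delta_q g\|_{L^{\tilde p}}\leq\|\Delta_q f\|_{L^{p_1}}\|\tilde\Delta_q g\|_{L^{p_2}}$, and since $\Delta_j(\Delta_q f\,\tilde\Delta_q g)$ has spectrum in a ball of radius $\sim2^j$, Bernstein's inequality yields
$$\big\|\Delta_j\big(\Delta_q f\,\tilde\Delta_q g\big)\big\|_{L^p}\leq C\,2^{j d\mu}\,\|\Delta_q f\|_{L^{p_1}}\|\tilde\Delta_q g\|_{L^{p_2}},\qquad \mu:=\frac1{p_1}+\frac1{p_2}-\frac1p\geq0.$$
With $s:=s_1+s_2-d\mu$ the target regularity index, I would multiply by $2^{js}$, sum over $q\geq j-N_0$, and set $c_q:=2^{qs_1}\|\Delta_q f\|_{L^{p_1}}$, $d_q:=2^{qs_2}\|\tilde\Delta_q g\|_{L^{p_2}}$ (the latter being controlled by three shifted copies of $2^{qs_2}\|\Delta_q g\|_{L^{p_2}}$), noting $s+d\mu=s_1+s_2$, which reduces matters to
$$2^{js}\big\|\Delta_j R(f,g)\big\|_{L^p}\leq C\sum_{q\geq j-N_0}2^{(j-q)(s_1+s_2)}\,c_q d_q.$$

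Finally I would view the right-hand side as a discrete convolution $\big(a\ast(c\,d)\big)_j$ with $a_k:=2^{k(s_1+s_2)}\mathbf 1_{\{k\leq N_0\}}$. Since $s_1+s_2>0$ the geometric series converges, with $\|a\|_{\ell^1}=\sum_{k\leq N_0}2^{k(s_1+s_2)}=\frac{2^{N_0(s_1+s_2)}}{1-2^{-(s_1+s_2)}}\leq\frac{C^{|s_1+s_2|+1}}{s_1+s_2}$; this single estimate produces both the $1/(s_1+s_2)$ blow-up and the $C^{|s_1+s_2|+1}$ growth in the asserted constant. Young's inequality for sequences then gives $\|a\ast(c\,d)\|_{\ell^r}\leq\|a\|_{\ell^1}\|c\,d\|_{\ell^r}$, and the sequence Hölder inequality (using $\frac1r\leq\frac1{r_1}+\frac1{r_2}$ together with $\ell^{\tilde r}\hookrightarrow\ell^r$ for $\frac1{\tilde r}=\frac1{r_1}+\frac1{r_2}$) gives $\|c\,d\|_{\ell^r}\leq\|c\|_{\ell^{r_1}}\|d\|_{\ell^{r_2}}=\|f\|_{B^{s_1}_{p_1,r_1}}\|g\|_{B^{s_2}_{p_2,r_2}}$. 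Taking the $\ell^r$ norm over $j$ of the last displayed inequality and combining everything delivers the claim.

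I expect the main obstacle to be bookkeeping rather than conceptual: one must keep the $s_1+s_2$ dependence of the constant \emph{sharp} (not merely finite) by summing the geometric series by hand instead of quoting an abstract convolution lemma, and one must make sure that the positivity $s_1+s_2>0$ is the only place where frequency summability is invoked --- in particular that no hidden high-frequency cut-off is secretly needed, which is exactly the structural difference between the remainder $R$ and the two paraproducts $T_f g,\,T_g f$.
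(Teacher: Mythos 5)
This proposition is stated in the paper without proof—it is quoted as a standard result from the references (Bahouri--Chemin--Danchin, Danchin)—and your argument is precisely the textbook proof those references give: locate the spectrum of each diagonal block $\Delta_q f\,\tilde\Delta_q g$ in a \emph{ball} (not an annulus) of radius $\lesssim 2^q$, so that $\Delta_j$ of it survives for all $q\geq j-N_0$, then apply Bernstein at frequency $2^j$ (the crucial choice, which produces the factor $2^{(j-q)(s_1+s_2)}$ rather than $2^{(j-q)s}$ and makes the hypothesis $s_1+s_2>0$ the only one needed), and finish with the $\ell^1*\ell^r$ convolution inequality and sequence H\"older. The only bookkeeping gap is that you gloss over the constant hidden in replacing $\|\tilde\Delta_q g\|_{L^{p_2}}$ by shifted copies of $\|\Delta_{q'}g\|_{L^{p_2}}$, which introduces a factor of order $2^{|s_2|}$ not visibly absorbed into $C^{|s_1+s_2|+1}/(s_1+s_2)$; this is a known blemish in how the constant is usually stated and does not affect the qualitative conclusion, but it is worth being aware that the claimed constant is not literally achieved by the naive estimate.
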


Some conclusions will be used in subsequent analysis. The first one
is the classical Bernstein's inequality.

\begin{lem}\label{lem2.1}
Let $k\in\mathbb{N}$ and $0<R_{1}<R_{2}$. There exists a constant
$C$, depending only on $R_{1},R_{2}$ and $d$, such that for all
$1\leq a\leq b\leq\infty$ and $f\in L^{a}$,
$$
\mathrm{Supp}\ \mathcal{F}f\subset
\mathbf{B}(0,R_{1}\lambda)\Rightarrow\sup_{|\alpha|=k}\|\partial^{\alpha}f\|_{L^{b}}
\leq C^{k+1}\lambda^{k+d(\frac{1}{a}-\frac{1}{b})}\|f\|_{L^{a}};
$$
$$
\mathrm{Supp}\ \mathcal{F}f\subset
\mathbf{C}(0,R_{1}\lambda,R_{2}\lambda) \Rightarrow
C^{-k-1}\lambda^{k}\|f\|_{L^{a}}\leq
\sup_{|\alpha|=k}\|\partial^{\alpha}f\|_{L^{a}}\leq
C^{k+1}\lambda^{k}\|f\|_{L^{a}}.
$$
Here $\mathcal{F}f$ represents the Fourier transform on $f$.
\end{lem}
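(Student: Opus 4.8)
This is the classical Bernstein inequality, and the plan is to treat its two parts separately, deriving the second (the annulus estimate, in both directions) from the first together with a short algebraic identity. Throughout, $C$ will be allowed to depend only on $R_1,R_2,d$; the crucial feature to keep track of is that the $k$-dependence of all constants is at most exponential, so that they can be absorbed into $C^{k+1}$.

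For the first inequality the idea is to write $f$ as a fixed convolution kernel applied to itself. Fix once and for all $\phi\in C_c^\infty(\mathbb{R}^d)$, valued in $[0,1]$, with $\phi\equiv 1$ on $\mathbf{B}(0,R_1)$ and $\mathrm{Supp}\,\phi\subset\mathbf{B}(0,2R_1)$, and set $h:=\mathcal{F}^{-1}\phi$, $h_\lambda:=\mathcal{F}^{-1}[\phi(\cdot/\lambda)]=\lambda^d h(\lambda\,\cdot)$. If $\mathrm{Supp}\,\mathcal{F}f\subset\mathbf{B}(0,R_1\lambda)$, then $\phi(\cdot/\lambda)\equiv 1$ on $\mathrm{Supp}\,\mathcal{F}f$, so $f=h_\lambda\ast f$ (this holds for $f\in L^a$ because a Fourier multiplier equal to $1$ on the spectrum of $f$ leaves $f$ unchanged), hence $\partial^\alpha f=(\partial^\alpha h_\lambda)\ast f$ with $\partial^\alpha h_\lambda(x)=\lambda^{d+|\alpha|}(\partial^\alpha h)(\lambda x)$. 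Define $c\in[1,\infty]$ by $\frac{1}{c}=1-(\frac{1}{a}-\frac{1}{b})$, which is admissible precisely because $1\leq a\leq b\leq\infty$; then Young's inequality gives $\|\partial^\alpha f\|_{L^b}\leq\|\partial^\alpha h_\lambda\|_{L^c}\|f\|_{L^a}$, and a change of variables yields $\|\partial^\alpha h_\lambda\|_{L^c}=\lambda^{|\alpha|+d(\frac{1}{a}-\frac{1}{b})}\|\partial^\alpha h\|_{L^c}$. So the first estimate reduces to the uniform bound $\sup_{|\alpha|=k}\|\partial^\alpha h\|_{L^c}\leq C^{k+1}$, with $C$ independent of $k$ and of $c\in[1,\infty]$.

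That uniform bound on the fixed Schwartz function $h$ is the real technical point, and the step I expect to need the most care, since the constant must grow only exponentially in $k$. I would argue thus: $\partial^\alpha h=\mathcal{F}^{-1}[(i\xi)^\alpha\phi]$, so $\|\partial^\alpha h\|_{L^\infty}\leq(2\pi)^{-d}\|(i\xi)^\alpha\phi\|_{L^1}\leq C^{k+1}$ by boundedness of $\phi$ on its support of radius $2R_1$; and, fixing an integer $N$ with $2N>d$, one has $(1+|x|^2)^N\partial^\alpha h(x)=\mathcal{F}^{-1}[(1-\Delta_\xi)^N((i\xi)^\alpha\phi)](x)$, whose sup-norm is at most $(2\pi)^{-d}\|(1-\Delta_\xi)^N((i\xi)^\alpha\phi)\|_{L^1}$. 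Leibniz's rule distributes at most $2N$ derivatives in $\xi$; those hitting $(i\xi)^\alpha$ produce a combinatorial factor $\leq|\alpha|^{2N}$ times a monomial of degree $\leq|\alpha|$ (hence $\leq(2R_1)^{|\alpha|}$ on $\mathrm{Supp}\,\phi$), while those hitting $\phi$ involve only the fixed quantity $\max_{|\gamma|\leq 2N}\|\partial^\gamma\phi\|_{L^1}$; since $|\alpha|^{2N}\leq C^{|\alpha|}$ after enlarging $C$, this gives $\|(1-\Delta_\xi)^N((i\xi)^\alpha\phi)\|_{L^1}\leq C^{k+1}$. Combining the sup-norm bound near the origin with the decay bound from $2N>d$ yields $\|\partial^\alpha h\|_{L^1}\leq C^{k+1}$, and interpolation $\|\partial^\alpha h\|_{L^c}\leq\|\partial^\alpha h\|_{L^1}^{1/c}\|\partial^\alpha h\|_{L^\infty}^{1-1/c}\leq C^{k+1}$ gives the claim for all $c\in[1,\infty]$.

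For the second inequality, the upper bound is the first one applied with $b=a$, since $\mathbf{C}(0,R_1\lambda,R_2\lambda)\subset\mathbf{B}(0,R_2\lambda)$. For the reverse bound I would use the elementary identity on $\{\xi\neq0\}$, $1=\big(\sum_j\xi_j^2/|\xi|^2\big)^k=\sum_{|\alpha|=k}\binom{k}{\alpha}\,\xi^\alpha\xi^\alpha/|\xi|^{2k}$, which is just the multinomial theorem. Fixing $\eta\in C_c^\infty(\mathbb{R}^d)$ with $\eta\equiv1$ on $\mathbf{C}(0,R_1,R_2)$ and $\mathrm{Supp}\,\eta$ a compact subset of $\mathbb{R}^d\setminus\{0\}$, multiplying through by $\eta(\xi/\lambda)\mathcal{F}f(\xi)$ and using $\xi^\alpha\mathcal{F}f=(-i)^k\mathcal{F}(\partial^\alpha f)$ gives, for $f$ with $\mathrm{Supp}\,\mathcal{F}f\subset\mathbf{C}(0,R_1\lambda,R_2\lambda)$, $f=(-i)^k\sum_{|\alpha|=k}\binom{k}{\alpha}\,n_{\alpha,\lambda}(D)\,\partial^\alpha f$, where $n_{\alpha,\lambda}(\xi)=\xi^\alpha\eta(\xi/\lambda)/|\xi|^{2k}=\lambda^{-k}n_{\alpha,1}(\xi/\lambda)$ is smooth and compactly supported away from the origin. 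The operator $n_{\alpha,\lambda}(D)$ is convolution with $\lambda^{d-k}(\mathcal{F}^{-1}n_{\alpha,1})(\lambda\,\cdot)$, of $L^1$-norm $\lambda^{-k}\|\mathcal{F}^{-1}n_{\alpha,1}\|_{L^1}\leq\lambda^{-k}C^{k+1}$ by exactly the estimate of the previous paragraph applied to the fixed function $n_{\alpha,1}$. Young's inequality then gives $\|f\|_{L^a}\leq\lambda^{-k}C^{k+1}\sum_{|\alpha|=k}\binom{k}{\alpha}\|\partial^\alpha f\|_{L^a}\leq\lambda^{-k}C^{k+1}d^k\sup_{|\alpha|=k}\|\partial^\alpha f\|_{L^a}$, using $\sum_{|\alpha|=k}\binom{k}{\alpha}=d^k$; after renaming $C$ and rearranging, $C^{-k-1}\lambda^k\|f\|_{L^a}\leq\sup_{|\alpha|=k}\|\partial^\alpha f\|_{L^a}$. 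The only points that genuinely require verification are the reproducing formulas $f=h_\lambda\ast f$ and its analogue on the annulus, and the exponential-in-$k$ control of the kernel norms discussed above; the various scalings and Young estimates are routine.
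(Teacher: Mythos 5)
The paper does not prove Lemma \ref{lem2.1}; it is stated as ``the classical Bernstein's inequality'' and deferred to the cited references \cite{BCD,D}. Your proof is correct and is essentially the standard argument one finds there: reproduce $f$ by convolution with a rescaled bump (resp.\ an annular cutoff divided by $|\xi|^{2k}$ via the multinomial identity), then control the kernel's $L^c$-norm by the weighted $L^1$--$L^\infty$ trick and apply Young. The scalings, the identification $1-\tfrac{1}{c}=\tfrac{1}{a}-\tfrac{1}{b}$, and the use of $\sum_{|\alpha|=k}\binom{k}{\alpha}=d^k$ are all right, and you correctly isolate the one nontrivial point — that the kernel bounds must grow at most exponentially in $k$, which you justify by the $|\alpha|^{2N}\le C^{|\alpha|}$ absorption.

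One small imprecision worth flagging: when you pass to the annulus kernel you say the $L^1$ bound on $\mathcal{F}^{-1}n_{\alpha,1}$ follows ``by exactly the estimate of the previous paragraph applied to the fixed function $n_{\alpha,1}$.'' But $n_{\alpha,1}(\xi)=\xi^\alpha\eta(\xi)/|\xi|^{2k}$ is not a fixed function — it depends on $\alpha$ and $k$, and it carries an extra $|\xi|^{-2k}$ factor not present before. The same \emph{technique} (Leibniz on $(1-\Delta_\xi)^N$, then the weighted $L^\infty$ bound and integrability from $2N>d$) does work, because on the fixed annulus $\mathrm{Supp}\,\eta$ the quantity $|\xi|^{-2k}$ is bounded by $r_1^{-2k}\le C^k$ and each of the at most $2N$ derivatives of $\xi^\alpha|\xi|^{-2k}$ contributes a factor polynomial in $k$; but you should say this explicitly rather than claim it is literally the previous estimate. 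With that clarification the proof is complete.
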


As a direct corollary of the above inequality, we have
\begin{rem}\label{rem2.1} For all
multi-index $\alpha$, it holds that
$$
\|\partial^\alpha f\|_{B^s_{p, r}}\leq C\|f\|_{B^{s + |\alpha|}_{p,
r}}.
$$
\end{rem}

The second one is the embedding properties in Besov spaces.
\begin{lem}\label{lem2.2} Let $s\in \mathbb{R}$ and $1\leq
p,r\leq\infty,$ then
$$B^{s}_{p,r}\hookrightarrow B^{\tilde{s}}_{p,\tilde{r}}\ \ \
\mbox{whenever}\ \ \tilde{s}<s\ \ \mbox{or}\ \ \tilde{s}=s \ \
\mbox{and}\ \ r\leq\tilde{r};$$
$$B^{s}_{p,r}\hookrightarrow B^{s-d(\frac{1}{p}-\frac{1}{\tilde{p}})}_{\tilde{p},r}\ \ \
\mbox{whenever}\ \ \tilde{p}>p;$$
$$B^{d/p}_{p,1}(1\leq p<\infty)\hookrightarrow\mathcal{C}_{0},\ \ \ B^{0}_{\infty,1}\hookrightarrow\mathcal{C}\cap L^{\infty},$$
where $\mathcal{C}_{0}$ is the space of continuous bounded functions
which decay at infinity.
\end{lem}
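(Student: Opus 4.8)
All three inclusions follow directly from Definition \ref{defn2.1} by estimating the dyadic pieces $\Delta_q f$ one frequency at a time and then comparing the resulting sequences, the only analytic input being Bernstein's inequality (Lemma \ref{lem2.1}). For the first inclusion, when $\tilde s=s$ and $r\le\tilde r$ the assertion is just the sequence embedding $\ell^r\hookrightarrow\ell^{\tilde r}$ applied to $\big(2^{qs}\|\Delta_q f\|_{L^p}\big)_{q\ge-1}$. When $\tilde s<s$, I would factor it through the chain $B^s_{p,r}\hookrightarrow B^s_{p,\infty}\hookrightarrow B^{\tilde s}_{p,1}\hookrightarrow B^{\tilde s}_{p,\tilde r}$: the outer two inclusions are again $\ell^r\subset\ell^\infty$ and $\ell^1\subset\ell^{\tilde r}$, and the middle one follows from
\[
\sum_{q\ge-1}2^{q\tilde s}\|\Delta_q f\|_{L^p}\le\Big(\sup_{q\ge-1}2^{qs}\|\Delta_q f\|_{L^p}\Big)\sum_{q\ge-1}2^{q(\tilde s-s)},
\]
the geometric series on the right being finite since $\tilde s-s<0$.

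For the second inclusion, I would apply the first part of Lemma \ref{lem2.1} with $a=p$, $b=\tilde p$, $k=0$: for $q\ge0$ the spectrum of $\Delta_q f$ lies in an annulus of size $\sim 2^q$, hence $\|\Delta_q f\|_{L^{\tilde p}}\le C\,2^{qd(1/p-1/\tilde p)}\|\Delta_q f\|_{L^p}$, while for $q=-1$ the spectrum lies in a fixed ball and the same bound holds with an absolute constant. Multiplying by $2^{q(s-d(1/p-1/\tilde p))}$ gives $2^{q(s-d(1/p-1/\tilde p))}\|\Delta_q f\|_{L^{\tilde p}}\le C\,2^{qs}\|\Delta_q f\|_{L^p}$ for every $q\ge-1$, and taking the $\ell^r$ norm in $q$ yields $B^s_{p,r}\hookrightarrow B^{s-d(1/p-1/\tilde p)}_{\tilde p,r}$.

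For the last line, the second inclusion with $\tilde p=\infty$ already gives $B^{d/p}_{p,1}\hookrightarrow B^0_{\infty,1}$, so it suffices to handle $B^0_{\infty,1}$, together with an extra argument for the decay at infinity. Given $f\in B^0_{\infty,1}$, the series $\sum_{q\ge-1}\Delta_q f$ converges absolutely in $L^\infty$ since $\sum_{q\ge-1}\|\Delta_q f\|_{L^\infty}=\|f\|_{B^0_{\infty,1}}$; each $\Delta_q f$ is a bounded band-limited function, hence continuous, so the limit --- which equals $f$ in $\mathcal{S}'$ --- lies in $\mathcal{C}\cap L^\infty$ with $\|f\|_{L^\infty}\le\|f\|_{B^0_{\infty,1}}$. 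To upgrade this to $\mathcal{C}_0$ when $f\in B^{d/p}_{p,1}$ with $p<\infty$, I would first note that $\sum_{q\ge-1}\|\Delta_q f\|_{L^p}\le C\|f\|_{B^{d/p}_{p,1}}$ (using $2^{-qd/p}\le1$ for $q\ge0$ since $d/p>0$, and treating $q=-1$ separately), so $f\in L^p$; then the low-frequency truncations $S_N f=\tilde\omega_N\ast f$ with $\tilde\omega_N\in\mathcal{S}$ and $f\in L^p$, $p<\infty$, lie in $\mathcal{C}_0$, and since $S_N f\to f$ in $L^\infty$ (tail of the convergent series above), $f$ is a uniform limit of functions in $\mathcal{C}_0$, hence in $\mathcal{C}_0$.

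The bulk of the argument is routine; the only step needing genuine care is the last one --- making precise that the Littlewood--Paley series reproduces $f$ in the right topology and that continuity and decay at infinity are inherited. In particular one must verify the embedding $B^{d/p}_{p,1}\hookrightarrow L^p$ for $p<\infty$, so that the smoothing operators $S_N$ actually map into $\mathcal{C}_0$; pointwise boundedness and continuity alone do not force decay at infinity.
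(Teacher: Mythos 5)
Your proof is correct: the first embedding via the $\ell^r\hookrightarrow\ell^{\tilde r}$ inclusion and a convergent geometric series, the second via Bernstein's inequality (Lemma \ref{lem2.1}) applied blockwise, and the last line via uniform convergence of the Littlewood--Paley series plus the $B^{d/p}_{p,1}\hookrightarrow L^p$ step to get decay at infinity, with the genuinely delicate point (that boundedness and continuity alone do not give membership in $\mathcal{C}_0$) handled properly. The paper does not prove Lemma \ref{lem2.2} at all, quoting it as standard from \cite{BCD,D}, and your argument is essentially the standard one found there, so there is nothing to reconcile.
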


The third one is the compactness result for Besov spaces.
\begin{prop}\label{prop2.3}
Let $1\leq p,r\leq \infty,\ s\in \mathbb{R}$ and $\varepsilon>0$.
For all $\phi\in C_{c}^{\infty}(\mathbb{R}^{d})$, the map
$f\mapsto\phi f$ is compact from
$B^{s+\varepsilon}_{p,r}(\mathbb{R}^{d})$ to
$B^{s}_{p,r}(\mathbb{R}^{d})$.
\end{prop}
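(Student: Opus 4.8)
The plan is to prove this by a localized Rellich-type argument: multiplication by $\phi$ turns a bounded sequence of $B^{s+\varepsilon}_{p,r}$ into a bounded sequence that is in addition \emph{supported in one fixed compact set}, and this spatial localization, together with the gain of $\varepsilon>0$ derivatives, yields relative compactness through a Littlewood-Paley frequency truncation combined with the Kolmogorov-Riesz compactness criterion in $L^{p}$. Let $(f_{n})_{n}$ be bounded in $B^{s+\varepsilon}_{p,r}(\mathbb{R}^{d})$ and set $g_{n}:=\phi f_{n}$. First I would record that multiplication by $\phi\in C_{c}^{\infty}$ maps $B^{s+\varepsilon}_{p,r}$ into itself boundedly: this is classical and follows from Bony's decomposition $\phi f_{n}=T_{\phi}f_{n}+T_{f_{n}}\phi+R(\phi,f_{n})$, estimating $T_{\phi}f_{n}$ by $\|T_{\phi}f_{n}\|_{B^{s+\varepsilon}_{p,r}}\le C\|\phi\|_{L^{\infty}}\|f_{n}\|_{B^{s+\varepsilon}_{p,r}}$, the term $T_{f_{n}}\phi$ by the rapid decay of $\Delta_{q}\phi$ in $q$ (since $\phi\in B^{m}_{\infty,\infty}$ for every $m$), and $R(\phi,f_{n})$ by Proposition~\ref{prop2.2} with first Lebesgue index $\infty$ and a smoothness exponent for $\phi$ large enough to make the exponents sum to a positive number. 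Consequently $(g_{n})$ is bounded in $B^{s+\varepsilon}_{p,r}$, say $\|g_{n}\|_{B^{s+\varepsilon}_{p,r}}\le M$, with $\mathrm{supp}\,g_{n}\subset K:=\mathrm{supp}\,\phi$ for all $n$, and it suffices to extract from $(g_{n})$ a subsequence converging in $B^{s}_{p,r}$.

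Next, for each $N\in\mathbb{N}$ I would split $g_{n}=S_{N}g_{n}+(\mathrm{Id}-S_{N})g_{n}$. The high-frequency tail is uniformly small: from Definition~\ref{defn2.1} and the almost orthogonality of the dyadic blocks, $\|(\mathrm{Id}-S_{N})g_{n}\|_{B^{s}_{p,r}}\le C\,2^{-N\varepsilon}\|g_{n}\|_{B^{s+\varepsilon}_{p,r}}\le C M\,2^{-N\varepsilon}$, which tends to $0$ as $N\to\infty$ uniformly in $n$. For the low-frequency part, fix $N$: then $S_{N}g_{n}$ is spectrally supported in a fixed ball $\mathbf{B}(0,R_{N})$ with $R_{N}\sim2^{N}$, so only finitely many blocks contribute and Bernstein's inequality (Lemma~\ref{lem2.1}) gives $\|S_{N}g_{n}\|_{B^{s}_{p,r}}\le C(N)\|S_{N}g_{n}\|_{L^{p}}$ and $\|\nabla S_{N}g_{n}\|_{L^{p}}\le C\,2^{N}\|S_{N}g_{n}\|_{L^{p}}$, while $\|S_{N}g_{n}\|_{L^{p}}\le\sum_{q\le N-1}\|\Delta_{q}g_{n}\|_{L^{p}}\le C(N)M$ via $B^{s+\varepsilon}_{p,r}\hookrightarrow B^{s+\varepsilon}_{p,\infty}$ (Lemma~\ref{lem2.2}). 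Hence $(S_{N}g_{n})_{n}$ is bounded in $L^{p}$ with $L^{p}$-bounded gradients, thus equicontinuous for translations in $L^{p}$; and since $S_{N}=\chi(2^{-N}D)$ has a rapidly decreasing convolution kernel $k_{N}$ while each $g_{n}$ is supported in the fixed compact $K$, pairing $g_{n}$ (which, being bounded in $B^{s+\varepsilon}_{p,r}$, lies in a fixed space of distributions acting continuously on $C^{\infty}$ functions) against $k_{N}(x-\cdot)$ gives $|S_{N}g_{n}(x)|\le C(N,M')\,(1+\mathrm{dist}(x,K))^{-M'}$ for every $M'>0$, uniformly in $n$; so the $L^{p}$-mass of $S_{N}g_{n}$ outside large balls is uniformly small. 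By the Kolmogorov-Riesz compactness theorem (and by Ascoli's theorem together with this decay when $p=\infty$), $(S_{N}g_{n})_{n}$ is relatively compact in $L^{p}$, hence in $B^{s}_{p,r}$.

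Finally I would conclude by a diagonal extraction: choose $N_{k}$ with $C M\,2^{-N_{k}\varepsilon}<1/k$, successively refine subsequences so that $(S_{N_{k}}g_{n})_{n}$ converges in $B^{s}_{p,r}$ for each $k$, and pass to the diagonal subsequence $(g_{n_{j}})_{j}$; bounding the two high-frequency remainders by the tail estimate above yields $\|g_{n_{j}}-g_{n_{j'}}\|_{B^{s}_{p,r}}\le\|S_{N_{k}}(g_{n_{j}}-g_{n_{j'}})\|_{B^{s}_{p,r}}+2/k$ for every $k$ and all large $j,j'$, so $(g_{n_{j}})_{j}$ is Cauchy, hence convergent, in $B^{s}_{p,r}$. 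This gives the asserted compactness of $f\mapsto\phi f$. The step I expect to be the main obstacle is the uniform spatial decay of $S_{N}g_{n}$ in the second paragraph: it is exactly there that the compact support of $\phi$ is indispensable, since the pure embedding $B^{s+\varepsilon}_{p,r}\hookrightarrow B^{s}_{p,r}$ loses compactness both at high frequencies (restored by the truncation) and at spatial infinity (restored by the localization).
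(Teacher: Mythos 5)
The paper states Proposition~\ref{prop2.3} as a black-box preliminary, citing \cite{BCD,D}, and supplies no proof of its own; so there is no internal argument against which to compare your approach. That said, your proof is correct and is the standard argument one finds in those references: boundedness of $f\mapsto\phi f$ on $B^{s+\varepsilon}_{p,r}$ via Bony's decomposition, the split $g_{n}=S_{N}g_{n}+(\mathrm{Id}-S_{N})g_{n}$ with the uniform tail bound $\|(\mathrm{Id}-S_{N})g_{n}\|_{B^{s}_{p,r}}\lesssim 2^{-N\varepsilon}\|g_{n}\|_{B^{s+\varepsilon}_{p,r}}$ coming from the gain of $\varepsilon$ derivatives, and then relative $L^{p}$-compactness of the band-limited piece obtained from the three Kolmogorov--Riesz ingredients: a uniform $L^{p}$ bound from finitely many blocks, translation equicontinuity from the Bernstein bound $\|\nabla S_{N}g_{n}\|_{L^{p}}\lesssim 2^{N}\|S_{N}g_{n}\|_{L^{p}}$, and tightness from the fixed compact support of $g_{n}$ together with the rapid off-diagonal decay of the kernel of $S_{N}$. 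Your closing diagonal extraction is also correct. The point you single out as the main obstacle --- the uniform spatial decay of $S_{N}g_{n}$ --- is indeed precisely where the compact support of $\phi$ enters, and it is exactly what fails for the bare embedding $B^{s+\varepsilon}_{p,r}\hookrightarrow B^{s}_{p,r}$, so your diagnosis of the structure of the proof is accurate. Two small remarks: for $p=\infty$ you correctly substitute Arzel\`a--Ascoli plus decay for Kolmogorov--Riesz, and for $r=\infty$ the $\ell^{r}$ tail estimate still holds as a supremum estimate, so no separate treatment is needed. I see no gap.
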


On the other hand, we also present the definition of Chemin-Lerner's
spaces first incited by J.-Y. Chemin and N. Lerner \cite{C2}, which
is the refinement of the spaces $L^{\theta}_{T}(B^{s}_{p,r})$.

\begin{defn}\label{defn2.3}
For $T>0, s\in\mathbb{R}, 1\leq r,\theta\leq\infty$, set (with the
usual convention if $r=\infty$)
$$\|f\|_{\widetilde{L}^{\theta}_{T}(B^{s}_{p,r})}:
=\Big(\sum_{q\geq-1}(2^{qs}\|\Delta_{q}f\|_{L^{\theta}_{T}(L^{p})})^{r}\Big)^{\frac{1}{r}}.$$
Then we define the space $\widetilde{L}^{\theta}_{T}(B^{s}_{p,r})$
as the completion of $\mathcal{S}$ over $(0,T)\times\mathbb{R}^{d}$
by the above norm.
\end{defn}
Furthermore, we define
$$\widetilde{\mathcal{C}}_{T}(B^{s}_{p,r}):=\widetilde{L}^{\infty}_{T}(B^{s}_{p,r})\cap\mathcal{C}([0,T],B^{s}_{p,r})
$$ and $$\widetilde{\mathcal{C}}^1_{T}(B^{s}_{p,r}):=\{f\in\mathcal{C}^1([0,T],B^{s}_{p,r})|\partial_{t}f\in\widetilde{L}^{\infty}_{T}(B^{s}_{p,r})\}.$$
The index $T$ will be omitted when $T=+\infty$. Let us emphasize
that

\begin{rem}\label{rem2.2}
\rm According to Minkowski's inequality, it holds that
$$\|f\|_{\widetilde{L}^{\theta}_{T}(B^{s}_{p,r})}\leq\|f\|_{L^{\theta}_{T}(B^{s}_{p,r})}\,\,\,
\mbox{if}\,\, r\geq\theta;\ \ \ \
\|f\|_{\widetilde{L}^{\theta}_{T}(B^{s}_{p,r})}\geq\|f\|_{L^{\theta}_{T}(B^{s}_{p,r})}\,\,\,
\mbox{if}\,\, r\leq\theta.
$$\end{rem}
Then, we state the property of continuity for product in
Chemin-Lerner's spaces $\widetilde{L}^{\theta}_{T}(B^{s}_{p,r})$.
\begin{prop}\label{prop2.4}
The following estimate holds:
$$
\|fg\|_{\widetilde{L}^{\theta}_{T}(B^{s}_{p,r})}\leq
C(\|f\|_{L^{\theta_{1}}_{T}(L^{\infty})}\|g\|_{\widetilde{L}^{\theta_{2}}_{T}(B^{s}_{p,r})}
+\|g\|_{L^{\theta_{3}}_{T}(L^{\infty})}\|f\|_{\widetilde{L}^{\theta_{4}}_{T}(B^{s}_{p,r})})
$$
whenever $s>0, 1\leq p\leq\infty,
1\leq\theta,\theta_{1},\theta_{2},\theta_{3},\theta_{4}\leq\infty$
and
$$\frac{1}{\theta}=\frac{1}{\theta_{1}}+\frac{1}{\theta_{2}}=\frac{1}{\theta_{3}}+\frac{1}{\theta_{4}}.$$
As a direct corollary, it holds that
$$\|fg\|_{\widetilde{L}^{\theta}_{T}(B^{s}_{p,r})}
\leq
C\|f\|_{\widetilde{L}^{\theta_{1}}_{T}(B^{s}_{p,r})}\|g\|_{\widetilde{L}^{\theta_{2}}_{T}(B^{s}_{p,r})}$$
whenever $s\geq d/p,
\frac{1}{\theta}=\frac{1}{\theta_{1}}+\frac{1}{\theta_{2}}.$
\end{prop}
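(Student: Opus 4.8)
The plan is to split the product with Bony's decomposition $fg=T_fg+T_gf+R(f,g)$ (Definition~\ref{defn2.2}) and to estimate each of the three pieces in $\widetilde{L}^{\theta}_{T}(B^{s}_{p,r})$ by combining the spectral localization of the dyadic blocks (Proposition~\ref{prop2.1}) with H\"older's inequality in the time variable applied \emph{at the level of each block} --- this last feature being precisely what the Chemin--Lerner norm is designed to permit. I would first establish all estimates for $f,g\in\mathcal{S}$ and then pass to general $f,g$ by the density built into Definition~\ref{defn2.3}. The only elementary ingredients are that $\Delta_q$ and $S_q$ are convolution operators whose kernels have $L^1$ norms bounded independently of $q$, whence $\|\Delta_qh\|_{L^p}\leq C\|h\|_{L^p}$ and $\|S_qh\|_{L^\infty}\leq C\|h\|_{L^\infty}$ with $C$ uniform in $q$, together with Young's inequality for convolutions of sequences.

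First I would treat the paraproduct $T_fg=\sum_qS_{q-1}f\,\Delta_qg$. By Proposition~\ref{prop2.1}, only the indices with $|q-j|\leq4$ contribute to $\Delta_j(T_fg)$. Using H\"older in $t$ (with $\frac{1}{\theta}=\frac{1}{\theta_1}+\frac{1}{\theta_2}$) and then in $x$,
$$\|\Delta_j(S_{q-1}f\,\Delta_qg)\|_{L^{\theta}_T(L^p)}\leq C\|S_{q-1}f\|_{L^{\theta_1}_T(L^\infty)}\|\Delta_qg\|_{L^{\theta_2}_T(L^p)}\leq C\|f\|_{L^{\theta_1}_T(L^\infty)}\|\Delta_qg\|_{L^{\theta_2}_T(L^p)},$$
so that $2^{js}\|\Delta_j(T_fg)\|_{L^{\theta}_T(L^p)}\leq C\|f\|_{L^{\theta_1}_T(L^\infty)}\sum_{|q-j|\leq4}2^{(j-q)s}\,2^{qs}\|\Delta_qg\|_{L^{\theta_2}_T(L^p)}$. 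Taking the $\ell^r$-norm in $j$ of this finite (at most nine-term) convolution gives $\|T_fg\|_{\widetilde{L}^{\theta}_T(B^{s}_{p,r})}\leq C\|f\|_{L^{\theta_1}_T(L^\infty)}\|g\|_{\widetilde{L}^{\theta_2}_T(B^{s}_{p,r})}$ with $C$ depending only on $s$. The term $T_gf$ is handled identically with the roles of $f,g$ and of $(\theta_1,\theta_2),(\theta_3,\theta_4)$ interchanged, producing $C\|g\|_{L^{\theta_3}_T(L^\infty)}\|f\|_{\widetilde{L}^{\theta_4}_T(B^{s}_{p,r})}$; note that neither paraproduct estimate needs $s>0$.

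Next, for the remainder $R(f,g)=\sum_q\Delta_qf\,\widetilde{\Delta}_qg$, the spectrum of the $q$-th summand lies in a ball of radius comparable to $2^{q}$, so there is a fixed integer $N_0$ with $\Delta_jR(f,g)=\sum_{q\geq j-N_0}\Delta_j(\Delta_qf\,\widetilde{\Delta}_qg)$. Estimating each term as above yields $2^{js}\|\Delta_jR(f,g)\|_{L^{\theta}_T(L^p)}\leq C\|f\|_{L^{\theta_1}_T(L^\infty)}\sum_{q\geq j-N_0}2^{(j-q)s}\,2^{qs}\|\widetilde{\Delta}_qg\|_{L^{\theta_2}_T(L^p)}$, and here $s>0$ enters decisively: the sequence $(c_k)$ given by $c_k=2^{-ks}$ for $k\geq-N_0$ and $c_k=0$ otherwise belongs to $\ell^1$, so Young's inequality on $\ell^r$ in $j$ bounds the right-hand side by $C\|f\|_{L^{\theta_1}_T(L^\infty)}\|g\|_{\widetilde{L}^{\theta_2}_T(B^{s}_{p,r})}$ (one could symmetrically keep $g$ in $L^\infty$). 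Summing the three contributions proves the first inequality. I do not anticipate a genuine obstacle here: the only points demanding attention are the $q$-uniformity of all constants and the convergence of the paraproduct and remainder series in $\widetilde{L}^{\theta}_T(B^{s}_{p,r})$, which is why reducing to Schwartz data and invoking density is the cleanest route.

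Finally, the corollary follows by absorbing the $L^{\vartheta}_T(L^\infty)$ norms into Chemin--Lerner--Besov norms: for $s\geq d/p$, the embedding $B^{d/p}_{p,1}\hookrightarrow L^\infty$ (Lemma~\ref{lem2.2}) together with Minkowski's inequality gives $\|h\|_{L^{\vartheta}_T(L^\infty)}\leq C\|h\|_{\widetilde{L}^{\vartheta}_T(B^{d/p}_{p,1})}$, while the obvious Chemin--Lerner analogue of the Besov embedding of Lemma~\ref{lem2.2} (the Chemin--Lerner norm being a weighted $\ell^r$-norm of the block norms $\|\Delta_q\cdot\|_{L^{\vartheta}_T(L^p)}$) gives $\|h\|_{\widetilde{L}^{\vartheta}_T(B^{d/p}_{p,1})}\leq C\|h\|_{\widetilde{L}^{\vartheta}_T(B^{s}_{p,r})}$. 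Choosing $(\theta_3,\theta_4)=(\theta_2,\theta_1)$ in the first inequality and applying this bound with $\vartheta=\theta_1$ to $\|f\|_{L^{\theta_1}_T(L^\infty)}$ and with $\vartheta=\theta_2$ to $\|g\|_{L^{\theta_2}_T(L^\infty)}$ collapses the right-hand side to $C\|f\|_{\widetilde{L}^{\theta_1}_T(B^{s}_{p,r})}\|g\|_{\widetilde{L}^{\theta_2}_T(B^{s}_{p,r})}$, as claimed.
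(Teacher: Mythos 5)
The paper states Proposition \ref{prop2.4} without proof, deferring to \cite{BCD,D}, so there is no internal argument to compare against; your task is simply to supply the standard proof, and you do so correctly. The Bony decomposition into $T_fg$, $T_gf$, $R(f,g)$, with H\"older applied in time at the level of each dyadic block (the key point making everything compatible with the Chemin--Lerner norm, which places the $L^\theta_T$ inside the $\ell^r$ sum), is exactly the right mechanism; you correctly identify that the two paraproducts involve finitely many indices $|q-j|\le 4$ and hence need no sign condition on $s$, while the remainder involves a one-sided tail $q\ge j-N_0$ and thus genuinely needs $s>0$ so that $\{2^{-ks}\}_{k\ge -N_0}\in\ell^1$ can be fed into Young's inequality for sequences. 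The derivation of the corollary by absorbing the $L^\vartheta_T(L^\infty)$ factors via $B^{d/p}_{p,1}\hookrightarrow L^\infty$, Minkowski, and the Besov embedding $B^s_{p,r}\hookrightarrow B^{d/p}_{p,1}$ is also correct.

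One small point worth flagging, although it is a looseness in the proposition's statement rather than a defect of your argument: in the corollary, the embedding $B^s_{p,r}\hookrightarrow B^{d/p}_{p,1}$ from Lemma \ref{lem2.2} holds for $s>d/p$ with arbitrary $r$, but in the borderline case $s=d/p$ it requires $r=1$. Your proof tacitly inherits this restriction through Lemma \ref{lem2.2}; the paper's ``whenever $s\ge d/p$'' should really carry the caveat ``(with $r=1$ if $s=d/p$)''. In the paper's applications one always has $r=1$, so nothing downstream is affected.
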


In addition, the estimate of commutators in
$\widetilde{L}^{\theta}_{T}(B^{s}_{p,1})$ spaces is also frequently
used in the subsequent analysis. The indices $s,p$ behave just as in
the stationary case \cite{BCD,D} whereas the time exponent $\rho$
behaves according to H\"{o}lder inequality.
\begin{lem}\label{lem2.3}
Let $1\leq p\leq\infty$ and $1\leq \theta\leq\infty$, then the
following inequality is true:
\begin{eqnarray*}
2^{qs}\|[f,\Delta_{q}]\mathcal{A}g\|_{L^{\theta}_{T}(L^{p})}\leq
 Cc_{q}\|f\|_{\widetilde{L}^{\theta_{1}}_{T}(B^{s}_{p,1})}\|g\|_{\widetilde{L}^{\theta_{2}}_{T}(B^{s}_{p,1})},\
\ s=1+d/p,
\end{eqnarray*}
where the commutator $[\cdot,\cdot]$ is defined by $[f,g]=fg-gf$,
the operator $\mathcal{A}=\mathrm{div}$ or $\mathrm{\nabla}$, $C$ is
a generic constant, and $c_{q}$ denotes a sequence such that
$\|(c_{q})\|_{ {l^{1}}}\leq
1,\frac{1}{\theta}=\frac{1}{\theta_{1}}+\frac{1}{\theta_{2}}.$
\end{lem}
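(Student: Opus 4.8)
The plan is to reduce the Chemin-Lerner estimate to the corresponding stationary (fixed-time) estimate, for which the indices $s,p$ are handled by Bony's decomposition exactly as in \cite{BCD,D}, and then to absorb the time exponent $\theta$ via H\"older's inequality in $t$. First I would decompose the commutator using Bony's paraproduct formula. Writing $\mathcal{A}=\mathrm{div}$ (the case $\mathcal{A}=\nabla$ being identical component by component), for each fixed $q$ one splits
$$[f,\Delta_q]\mathcal{A}g=[T_f,\Delta_q]\mathcal{A}g+T'_{\mathcal{A}\Delta_q g}f-\Delta_q T'_{\mathcal{A}g}f,$$
where $T'_ab=T_ab+R(a,b)$, and further $T_f\mathcal{A}g=\sum_p S_{p-1}f\,\Delta_p\mathcal{A}g$. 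The only genuinely delicate term is the first one, where the low-frequency factor $S_{p-1}f$ sits between $\Delta_q$ and the derivative; this is where the gain of one derivative in $\mathcal{A}$ is exactly compensated by a commutator gain. Using Proposition \ref{prop2.1}, only finitely many $p$ with $|p-q|\le 4$ contribute, and the classical commutator argument (first-order Taylor expansion of the kernel of $\Delta_q$) gives, for each fixed $t$,
$$\|[S_{p-1}f,\Delta_q]\partial_j\Delta_p g(t,\cdot)\|_{L^p}\le C2^{-q}\|\nabla S_{p-1}f(t,\cdot)\|_{L^\infty}\|\Delta_p g(t,\cdot)\|_{L^p}.$$

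Next I would take the $L^\theta$ norm in time of the resulting pointwise-in-$t$ bounds and apply H\"older with $\tfrac1\theta=\tfrac1{\theta_1}+\tfrac1{\theta_2}$, so that a product of the form $\|\nabla S_{p-1}f\|_{L^{\theta_1}_T(L^\infty)}\,\|\Delta_pg\|_{L^{\theta_2}_T(L^p)}$ appears. Then multiply by $2^{qs}$, use $2^{qs}2^{-q}\le 2^{ps}2^{-p}$ up to a constant since $|p-q|\le 4$, and distribute $2^{ps}=2^{p}2^{p(s-1)}$: the factor $2^p\|\nabla S_{p-1}f\|_{L^{\theta_1}_T(L^\infty)}\le 2^p\sum_{p'\le p-2}\|\Delta_{p'}\nabla f\|_{L^{\theta_1}_T(L^\infty)}$ is controlled, via Bernstein (Lemma \ref{lem2.1}) and the embedding $B^{d/p}_{p,1}\hookrightarrow L^\infty$ of Lemma \ref{lem2.2}, by $2^p\cdot 2^{-p}\,c_p^{(1)}\|f\|_{\widetilde L^{\theta_1}_T(B^s_{p,1})}$ with $s=1+d/p$; note here one crucially uses $\ell^1$ summability (the index $r=1$) to sum the telescoping low-frequency part without losing a logarithm. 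The remaining factor $2^{p(s-1)}\|\Delta_pg\|_{L^{\theta_2}_T(L^p)}=c_p^{(2)}2^{p(d/p)}\|\Delta_pg\|_{L^{\theta_2}_T(L^p)}$ is again summable in $\ell^1$; a discrete convolution of the two $\ell^1$ sequences against the finitely-supported weight $\mathbf 1_{|p-q|\le4}$ produces the desired sequence $c_q$ with $\|(c_q)\|_{\ell^1}\le 1$.

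The two remaining paraproduct pieces are easier and require no commutator structure: $T'_{\mathcal{A}g}f$ and $\Delta_qT'_{\mathcal{A}g}f$ are estimated directly by the product rules underlying Proposition \ref{prop2.4} and Proposition \ref{prop2.2}, again after taking $L^\theta_T$ in time and splitting via H\"older into $L^{\theta_1}_T$ and $L^{\theta_2}_T$; here one uses $\|\mathcal{A}g\|_{\widetilde L^{\theta_2}_T(B^{s-1}_{p,1})}\lesssim\|g\|_{\widetilde L^{\theta_2}_T(B^s_{p,1})}$ (Remark \ref{rem2.1}), and the condition $s=1+d/p>0$ together with $(s-1)+s>0$ guarantees the remainder $R$ and the paraproducts land in the right space with the stated norm control. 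The main obstacle is the first term: keeping track of the fact that the localization $\Delta_q$ together with the derivative $\mathcal{A}$ yields a net gain (the commutator costs $2^{-q}$ while $\mathcal{A}$ costs $2^{q}$, so they cancel and only the $L^\infty$ norm of $\nabla f$ survives), and organizing the two $\ell^1$ sequences so that their convolution still has $\ell^1$ norm bounded by $1$ after normalization — this is exactly the place where the Chemin-Lerner refinement (taking $\|\Delta_q\cdot\|_{L^\theta_T(L^p)}$ \emph{before} the $\ell^r$ sum) buys us the H\"older-in-time splitting that the usual $L^\theta_T(B^s_{p,r})$ norm would not allow.
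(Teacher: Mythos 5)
Your overall plan is the same one the paper uses: Bony's decomposition of the commutator, a first-order Taylor expansion of the kernel of $\Delta_q$ for the paraproduct piece $[T_f,\Delta_q]\mathcal{A}g$, and H\"older in time to distribute the $L^\theta_T$-exponent into $\theta_1,\theta_2$. In the paper the stationary estimate is organized as Proposition \ref{prop6.1} (with the extra device of splitting $f=\Delta_{-1}f+\tilde f$ into six pieces $F^1_q,\ldots,F^6_q$), and the closing remark of the Appendix observes that Lemma \ref{lem2.3} is the specialization $p_2=p$, $p_1=\infty$, $\sigma=1+d/p$, $r=1$ of (\ref{R-E36}), combined with Remark \ref{rem2.1} and H\"older in time. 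So the approaches agree.

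There is, however, a bookkeeping slip in your treatment of the decisive piece. You state the Taylor bound as
$\|[S_{p'-1}f,\Delta_q]\partial_j\Delta_{p'}g\|_{L^p}\leq C2^{-q}\|\nabla S_{p'-1}f\|_{L^\infty}\|\Delta_{p'}g\|_{L^p}$,
but the Young/Taylor argument actually produces $\|\partial_j\Delta_{p'}g\|_{L^p}\approx 2^{p'}\|\Delta_{p'}g\|_{L^p}$ on the right, not $\|\Delta_{p'}g\|_{L^p}$; that $2^{p'}$ is precisely what cancels the commutator gain $2^{-q}\approx2^{-p'}$ (since $|p'-q|\leq4$). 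Because you dropped it, your exponent arithmetic leaves a spurious residual $2^{-p'}$, and you then try to extract a summable sequence $c^{(1)}_{p'}$ from the $f$ factor via the claim $\|\nabla S_{p'-1}f\|_{L^{\theta_1}_T(L^\infty)}\leq C\,2^{-p'}c^{(1)}_{p'}\|f\|_{\widetilde L^{\theta_1}_T(B^s_{p,1})}$. That claim is false: $\|\nabla S_{p'-1}f\|_{L^\infty}$ is nondecreasing in $p'$ and is only controlled \emph{uniformly} (with no $2^{-p'}$ decay) by $\|\nabla f\|_{B^{d/p}_{p,1}}\leq C\|f\|_{B^{s}_{p,1}}$, via Bernstein and the embedding $B^{d/p}_{p,1}\hookrightarrow L^\infty$. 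Consequently the ``convolution of two $\ell^1$ sequences'' you invoke does not exist. The correct and simpler bookkeeping, as in the paper's estimate of $F^1_q$, keeps the $2^{p'}$ from the derivative, cancels it against $2^{-q}$, bounds the $f$ factor uniformly by $\|\nabla f\|_{L^\infty}$, and extracts the summable sequence $c_q$ entirely from the Besov coefficients of $g$, namely
$c_q\sim\sum_{|p'-q|\leq4}2^{p's}\|\Delta_{p'}g\|_{L^{\theta_2}_T(L^p)}\big/\|g\|_{\widetilde L^{\theta_2}_T(B^s_{p,1})}$.
With this correction the rest of your argument --- the two easier paraproduct/remainder pieces and the H\"older-in-time split --- goes through as you describe.
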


Finally, we state a continuity result for compositions to end up
this section.
\begin{prop}\label{prop2.4}
Let $s>0$, $1\leq p, r, \theta\leq \infty$, $F\in
W^{[s]+1,\infty}_{loc}(I;\mathbb{R})$ with $F(0)=0$, $T\in
(0,\infty]$ and $v\in \widetilde{L}^{\theta}_{T}(B^{s}_{p,r})\cap
L^{\infty}_{T}(L^{\infty}).$ Then
$$\|F(v)\|_{\widetilde{L}^{\theta}_{T}(B^{s}_{p,r})}\leq
C(1+\|v\|_{L^{\infty}_{T}(L^{\infty})})^{[s]+1}\|v\|_{\widetilde{L}^{\theta}_{T}(B^{s}_{p,r})}.$$
\end{prop}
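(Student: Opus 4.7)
The plan is to adapt the classical composition theorem in Besov spaces (see e.g. Runst--Sickel or Bahouri--Chemin--Danchin) to the Chemin--Lerner setting by tracking the time integrability throughout the argument. The starting point is Meyer's telescoping identity. Since $F(0)=0$, write
$$F(v)=\sum_{q\geq -1}\bigl[F(S_{q+1}v)-F(S_q v)\bigr]=\sum_{q\geq -1}m_q\,\Delta_q v,\qquad m_q:=\int_0^1 F'\bigl(S_q v+t\,\Delta_q v\bigr)\,dt,$$
with the convention $S_{-1}v\equiv 0$. Because $S_q v+t\Delta_q v$ is uniformly bounded by $\|v\|_{L^\infty_T(L^\infty)}$, the scalars $m_q$ take values in a fixed compact subset of $I$ on which $F'\in W^{[s],\infty}$; in particular one obtains the pointwise control $\|m_q\|_{L^\infty_T(L^\infty)}\leq C(1+\|v\|_{L^\infty_T(L^\infty)})^{[s]}$.

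First I would treat the case $0<s<1$ (so $[s]=0$). Applying $\Delta_j$ to the telescoping series and splitting the sum at $q=j$, the low-frequency part $\sum_{q\leq j}\Delta_j(m_q\Delta_q v)$ is controlled by the $L^\infty_T(L^\infty)$ bound on $m_q$ together with Bernstein's inequality, while the high-frequency part $\sum_{q>j}\Delta_j(m_q\Delta_q v)$ is handled directly using the same uniform bound on $m_q$. Taking $L^\theta$-norms in time inside the $\ell^r$ summation (as in the very definition of $\widetilde{L}^\theta_T(B^s_{p,r})$), one arrives at
$$\|F(v)\|_{\widetilde{L}^\theta_T(B^s_{p,r})}\leq C\bigl(1+\|v\|_{L^\infty_T(L^\infty)}\bigr)\|v\|_{\widetilde{L}^\theta_T(B^s_{p,r})},$$
which is the desired bound for $[s]=0$.

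For $s\geq 1$ I would proceed by induction on $[s]$, using the chain rule $\partial_i F(v)=F'(v)\,\partial_i v$ and the product estimate in Chemin--Lerner spaces from Proposition \ref{prop2.4}. Applied once, this reduces the question to bounding $\|F'(v)\|_{\widetilde{L}^\infty_T(B^{s-1}_{p,r})}$ against $\|\partial_i v\|_{\widetilde{L}^\theta_T(B^{s-1}_{p,r})}$ (which is comparable to $\|v\|_{\widetilde{L}^\theta_T(B^s_{p,r})}$ by Remark \ref{rem2.1}), while $F'$ now needs only $W^{[s],\infty}_{loc}$ regularity; the induction hypothesis applied to $F'$ then yields the factor $(1+\|v\|_{L^\infty_T(L^\infty)})^{[s]}$, and multiplying by the extra factor coming from the base step gives the stated exponent $[s]+1$. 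Hölder's inequality in time is invoked at each use of Proposition \ref{prop2.4} to redistribute $\theta$ between the two factors, one of which is taken in the $\theta=\infty$ slot.

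The main obstacle will be the inductive step: organising the recursion so that the polynomial factor $(1+\|v\|_{L^\infty})^{[s]+1}$ emerges with exactly the right exponent (each chain-rule step must contribute one power and no more), and so that the Chemin--Lerner time indices balance correctly at every application of the product rule without any loss of integrability. A delicate point is the transition at integer values of $s$, where one must rely on the algebra-type corollary of Proposition \ref{prop2.4} (which requires $s\geq d/p$) rather than its low-regularity form; this forces one to keep careful track of which factor supplies the $L^\infty$ control and which supplies the Besov regularity at each step of the induction.
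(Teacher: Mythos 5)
First, a remark on context: the paper does not prove this proposition at all; it is quoted as a known continuity result for compositions, with the reader referred to \cite{BCD,D}. So the comparison here is between your argument and the standard proof in those references, which proceeds by Meyer's telescoping $F(v)=\sum_{q}\bigl(F(S_{q+1}v)-F(S_{q}v)\bigr)$, estimates \emph{all} derivatives $\partial^{\alpha}\bigl(F(S_{q+1}v)-F(S_{q}v)\bigr)$ for $|\alpha|\le [s]+1$ (these involve only $F^{(k)}$ with $k\le [s]+1$, hence fit the hypothesis $F\in W^{[s]+1,\infty}_{loc}$), and then invokes the lemma on series $u_{q}$ satisfying $\|\partial^{\alpha}u_{q}\|_{L^{p}}\lesssim 2^{q(|\alpha|-s)}c_{q}$; the transfer to $\widetilde{L}^{\theta}_{T}$ is then immediate because every estimate is blockwise in time, which is the one point you do handle correctly.

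Your proposal has two genuine gaps. (1) In the base case $0<s<1$, the delicate part is precisely the sum over $q\le j$, and the tools you name do not suffice: with only $\|m_{q}\|_{L^{\infty}_{T}(L^{\infty})}$ and Bernstein applied to $\Delta_{q}v$ you get $2^{js}\|\Delta_{j}(m_{q}\Delta_{q}v)\|_{L^{p}}\lesssim 2^{(j-q)s}c_{q}$, which is not summable over $q\le j$. To gain the needed decay $2^{-(j-q)(1-s)}$ you must use $\|\Delta_{j}w\|_{L^{p}}\lesssim 2^{-j}\|\nabla w\|_{L^{p}}$ on the summands, and $\nabla m_{q}$ involves $F''$, which is \emph{not} available when $[s]=0$; the correct fix is to differentiate the unfactored differences $F(S_{q+1}v)-F(S_{q}v)$, whose gradient involves only $F'$. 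As written, the factorization $m_{q}\Delta_{q}v$ plus ``$L^{\infty}$ bound and Bernstein'' does not close the base case. (2) The inductive step via $\partial_{i}F(v)=F'(v)\partial_{i}v$ and the product estimate of Proposition 2.4 does not work in the stated generality: the symmetric product law in $B^{s-1}_{p,r}$ produces a term $\|\partial_{i}v\|_{L^{\infty}_{T}(L^{\infty})}\,\|F'(v)\|_{\widetilde{L}^{\cdot}_{T}(B^{s-1}_{p,r})}$, and $\nabla v\in L^{\infty}$ is not controlled by $v\in \widetilde{L}^{\theta}_{T}(B^{s}_{p,r})\cap L^{\infty}_{T}(L^{\infty})$; the algebra-type corollary instead requires $s-1\ge d/p$, which is not assumed (take e.g. $d=3$, $p=2$, $s=3/2$). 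So the induction closes only in the regime $B^{s-1}_{p,r}\hookrightarrow L^{\infty}$ (which happens to cover the paper's application $s=1+d/2$, $p=2$, $r=1$), not for the proposition as stated. The standard proof avoids the chain rule altogether for exactly this reason.
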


\section{Symmetrization and local existence}
\setcounter{equation}{0}

In terms of the ideas in \cite{STW}, we introduce a new variable
(sound speed) which transforms the
 equations (\ref{R-E1}) into a symmetric hyperbolic system.
For the isentropic case $(\gamma>1)$,
 denote the sound speed by
$$\psi(\rho)=\sqrt{P'(\rho)},$$ and set $\bar{\psi}=\psi(\bar{\rho})$ corresponding to the sound speed at a background density $\bar{\rho}>0$.
Let
$$\varrho=\frac{2}{\gamma-1}(\psi(\rho)-\bar{\psi}).$$
Then the Euler equations (\ref{R-E1}) is transformed into the
symmetric form for classical solutions:
\begin{equation}
\left\{
\begin{array}{l}\partial_{t}\varrho+\bar{\psi}\mbox{div}\textbf{v}=-\textbf{v}\cdot\nabla
\varrho-\frac{\gamma-1}{2}\varrho\mbox{div}\textbf{v},\\
 \partial_{t}\textbf{v}+\bar{\psi}\nabla \varrho+\frac{1}{\tau}\textbf{v}=-\textbf{v}\cdot\nabla\textbf{v}-\frac{\gamma-1}{2}\varrho\nabla \varrho.
\end{array} \right. \label{R-E8}
\end{equation}
The initial data (\ref{R-E2}) become
\begin{equation}(\varrho,\textbf{v})|_{t=0}=(\varrho_{0},\textbf{v}_{0}) \label{R-E9}\end{equation}
with
$$\varrho_{0}=\frac{2}{\gamma-1}(\psi(\rho_{0})-\bar{\psi}).$$

\begin{rem} \label{rem3.1}
The variable change is from the open set
$\{(\rho,\textbf{v})\in (0,+\infty)\times \mathbb{R}^{d}\}$ to the
whole space $\{(\varrho,\textbf{v})\in \mathbb{R}\times
\mathbb{R}^{d}\}$. It is easy to show that for classical solutions
$(\rho,\textbf{v})$ away from vacuum, (\ref{R-E1})-(\ref{R-E2}) is
equivalent to (\ref{R-E8})-(\ref{R-E9}) with
$\frac{\gamma-1}{2}\varrho+\bar{\psi}>0$.
\end{rem}

\begin{rem}
For the isothermal case $\gamma=1$, let us introduce the enthalpy
change $\varrho(t,x)=\sqrt{A}(\ln \rho-\ln \bar{\rho})$. In this
case, the equations (\ref{R-E1}) can be transformed into the system
(\ref{R-E8}) with $\gamma=1$, the reader is referred to \cite{FX}
for more details. In subsequent sections, we focus mainly on the
case $\gamma>1$, since the isothermal case can be dealt with at a
similar manner.
\end{rem}

Recently, we have achieved a local existence theory of classical
solutions in the framework of Chemin-Lerner's spaces for
compressible Euler-Maxwell equations, see \cite{X}. Actually, the
new result is applicable to generally symmetrizable hyperbolic
systems, including the current Euler equations of special form.
Here, we present the result only and the details of the proof are
omitted for brevity.
\begin{prop}\label{prop3.1} For any fixed relaxation time $\tau>0$,
assume that $(\varrho_{0},\mathbf{v}_{0})\in{B^{\sigma}_{2,1}}$
satisfying $\frac{\gamma-1}{2}\varrho_{0}+\bar{\psi}>0$, then there
exists a time $T_{0}>0$ (depending only on the initial data) and a
unique solution $(\varrho,\mathbf{v})$ to (\ref{R-E8})-(\ref{R-E9})
such that $(\varrho,\mathbf{v})\in \mathcal{C}^{1}([0,T_{0}]\times
\mathbb{R}^{d})$ with $\frac{\gamma-1}{2}\varrho+\bar{\psi}>0$ for
all $t\in[0,T_{0}]$ and $(\varrho,\mathbf{v})\in
\widetilde{\mathcal{C}}_{T_{0}}(B^{\sigma}_{2,1})\cap
\widetilde{\mathcal{C}}^1_{T_{0}}(B^{\sigma-1}_{2,1})$.
\end{prop}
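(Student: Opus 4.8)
The plan is to construct the solution by a standard iteration scheme built on the Littlewood--Paley machinery of Section 2. First I would set up a sequence of approximate solutions $(\varrho^n,\mathbf{v}^n)$: start from $(\varrho^0,\mathbf{v}^0)=(S_0\varrho_0,S_0\mathbf{v}_0)$, and define $(\varrho^{n+1},\mathbf{v}^{n+1})$ as the solution of the linear symmetric hyperbolic system obtained from (\ref{R-E8}) by freezing the variable coefficient $\mathbf{v}\cdot\nabla$ and the quadratic right-hand side at the previous iterate. Since the principal part is the constant-coefficient symmetric operator $(\bar\psi\,\mathrm{div}\,\mathbf{v},\ \bar\psi\nabla\varrho)$ plus the zeroth-order damping $\tfrac1\tau\mathbf{v}$, each linear step is uniquely solvable in $\widetilde{\mathcal{C}}_T(B^{\sigma}_{2,1})$ (equivalently one may first apply Friedrichs mollifiers and reduce to an ODE in a Banach space, then remove the mollification). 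Because the constraint $\tfrac{\gamma-1}{2}\varrho_0+\bar\psi>0$ is an open condition and the iterates depend continuously on time, it persists on a short interval uniformly in $n$.

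The heart of the matter is the uniform a priori bound, independent of $n$. Applying $\Delta_q$ to the linear system, running the $L^2$ energy method on the localized equations, and commuting $\Delta_q$ past the convection term produces the commutators $[\mathbf{v}^n\cdot\nabla,\Delta_q](\varrho^{n+1},\mathbf{v}^{n+1})$, which are handled by Lemma \ref{lem2.3} at the critical index $s=1+d/2=\sigma$ (with $p=2$). The quadratic terms $\varrho\,\mathrm{div}\,\mathbf{v}$ and $\varrho\nabla\varrho$ are controlled by the product estimate in Chemin--Lerner spaces (Proposition \ref{prop2.4}), using that $B^{\sigma}_{2,1}$ with $\sigma\ge d/2$ is an algebra, and — if one keeps the original density — by the composition estimate. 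Multiplying by $2^{q\sigma}$, summing in $q$ and using the $\ell^1$-summability of the sequences $(c_q)$ yields
\[
\|(\varrho^{n+1},\mathbf{v}^{n+1})(t)\|_{B^{\sigma}_{2,1}}\le C\|(\varrho_0,\mathbf{v}_0)\|_{B^{\sigma}_{2,1}}\exp\Big(C\int_0^t\|(\varrho^n,\mathbf{v}^n)(t')\|_{B^{\sigma}_{2,1}}\,dt'\Big).
\]
Crucially, the damping term $\tfrac1\tau\mathbf{v}$ only contributes the nonpositive quantity $-\tfrac1\tau\|\Delta_q\mathbf{v}^{n+1}\|_{L^2}^2$ to the energy identity, so for fixed $\tau>0$ it can only improve the estimate and introduces no new $\tau$-dependence; hence a continuation/bootstrap argument produces a time $T_0>0$ depending only on $\|(\varrho_0,\mathbf{v}_0)\|_{B^{\sigma}_{2,1}}$ on which the whole sequence is bounded in $\widetilde{L}^\infty_{T_0}(B^{\sigma}_{2,1})$, and, reading $\partial_t(\varrho^{n+1},\mathbf{v}^{n+1})$ off the equation, in $\widetilde{L}^\infty_{T_0}(B^{\sigma-1}_{2,1})$ as well.

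Next I would show the scheme is contractive in the lower-order norm: the differences $\delta\varrho^n=\varrho^{n+1}-\varrho^n$, $\delta\mathbf{v}^n=\mathbf{v}^{n+1}-\mathbf{v}^n$ solve a linear system whose source is a sum of products of a factor bounded in $B^{\sigma}_{2,1}$ with a difference measured in $B^{\sigma-1}_{2,1}$; the same energy-plus-commutator argument, now at regularity $\sigma-1$, gives $\|(\delta\varrho^n,\delta\mathbf{v}^n)\|_{\widetilde{L}^\infty_{T_0}(B^{\sigma-1}_{2,1})}\le\tfrac12\|(\delta\varrho^{n-1},\delta\mathbf{v}^{n-1})\|_{\widetilde{L}^\infty_{T_0}(B^{\sigma-1}_{2,1})}$ after possibly shrinking $T_0$. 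Thus $(\varrho^n,\mathbf{v}^n)$ converges in $\mathcal{C}([0,T_0],B^{\sigma-1}_{2,1})$; interpolating with the uniform $B^{\sigma}_{2,1}$ bound yields convergence in $\mathcal{C}([0,T_0],B^{s}_{2,1})$ for every $s<\sigma$, which suffices to pass to the limit in all nonlinear terms. Weak-$*$ compactness recovers $(\varrho,\mathbf{v})\in\widetilde{L}^\infty_{T_0}(B^{\sigma}_{2,1})$ and the equation gives $\partial_t(\varrho,\mathbf{v})\in\widetilde{L}^\infty_{T_0}(B^{\sigma-1}_{2,1})$; strong time continuity with values in $B^{\sigma}_{2,1}$ (not merely weak continuity) follows from the usual argument exploiting the equation and uniform spectral localization, so $(\varrho,\mathbf{v})\in\widetilde{\mathcal{C}}_{T_0}(B^{\sigma}_{2,1})\cap\widetilde{\mathcal{C}}^1_{T_0}(B^{\sigma-1}_{2,1})$. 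Uniqueness is the stability estimate applied to two solutions. Finally, since $\sigma-1=d/2$, Lemma \ref{lem2.2} gives $B^{\sigma}_{2,1}\hookrightarrow\mathcal{C}^1$, and combined with $\widetilde{\mathcal{C}}^1_{T_0}(B^{\sigma-1}_{2,1})$ this upgrades $(\varrho,\mathbf{v})$ to a classical $\mathcal{C}^1([0,T_0]\times\mathbb{R}^d)$ solution; the constraint $\tfrac{\gamma-1}{2}\varrho+\bar\psi>0$ on $[0,T_0]$ is preserved by continuity after a final shrinking of $T_0$ if necessary.

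\textbf{Main obstacle.} The delicate point is that everything must be done \emph{uniformly} (in the iteration index, and without losing $\tau$-uniformity of $T_0$) at the critical regularity $\sigma=1+d/2$, where there is no room to spare: this is precisely what forces the passage to the Chemin--Lerner spaces $\widetilde{L}^\theta_T(B^s_{p,r})$ and the sharp commutator estimate of Lemma \ref{lem2.3}, rather than the cruder $L^\theta_T(B^s_{p,r})$ bounds, and it requires checking that the one-derivative loss in the contraction step is genuinely recovered by interpolation against the top-order estimate.
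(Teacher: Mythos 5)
Your sketch is sound, but note that the paper does not actually prove Proposition~\ref{prop3.1}: immediately before the statement it cites reference~\cite{X} (a local/global theory for Euler--Maxwell said to apply to general symmetrizable hyperbolic systems) and explicitly omits the details. What you outline --- a Friedrichs/mollification iteration on the symmetric system, localized $L^2$ energy estimates with the critical commutator bound of Lemma~\ref{lem2.3} at $s=1+d/2$ and the product/composition estimates of Proposition~\ref{prop2.4}, a one-derivative-loss contraction at level $B^{\sigma-1}_{2,1}$ recovered by interpolation against the top-order bound, Fatou/weak-$*$ compactness in the Chemin--Lerner space plus the spectral-localization argument for strong time continuity, and the embedding $B^{1+d/2}_{2,1}\hookrightarrow\mathcal{C}^1$ to upgrade to a classical solution --- is precisely the standard Chemin--Lerner local-existence scheme that the cited reference carries out, so there is no genuine divergence to flag. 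Two small points worth tightening if you were to write this out: the a priori bound you display is the pointwise-in-time Gronwall form, whereas to close in $\widetilde{L}^\infty_{T}(B^{\sigma}_{2,1})$ one should phrase it directly in Chemin--Lerner norms (with the H\"older split of the time exponents built into Lemma~\ref{lem2.3}); and the commutator lemma as stated requires $s=1+d/p$, so at the contraction level $\sigma-1=d/2$ you should invoke the (cruder) remainder and paraproduct bounds of Proposition~\ref{prop2.2} rather than Lemma~\ref{lem2.3} itself, which is harmless since no sharpness is needed there. Your observation that the $\tfrac1\tau\mathbf{v}$ damping only adds a nonpositive term and hence leaves $T_0$ independent of $\tau$ is exactly the point the proposition is recording.
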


\section{Global existence}
\setcounter{equation}{0} In this section, we first establish a
crucial \textit{a priori} estimate in Chemin-Lerner's spaces. Then
by the standard boot-strap argument, we obtain the global existence
of classical solutions of (\ref{R-E8})-(\ref{R-E9}).

The  \textit{a priori} estimate is comprised in the following
proposition.
\begin{prop}\label{prop4.1}
 Let $(\varrho,\mathbf{v})\in
\widetilde{\mathcal{C}}_{T}(B^{\sigma}_{2,1})\cap
\widetilde{\mathcal{C}}^1_{T}(B^{\sigma-1}_{2,1})$ be the solution
of (\ref{R-E8})-(\ref{R-E9}) for any given time $T>0$. There exist
some positive constants $\delta_{1}, \lambda_{1}$ and $C_{1}$
independent of $\tau$ such that if
\begin{eqnarray}\|(\varrho,\mathbf{v})\|_{\widetilde{L}^\infty_{T}(B^{\sigma}_{2,1})}\leq
\delta_{1},\label{R-E10}\end{eqnarray} then
\begin{eqnarray}&&\|(\varrho,\mathbf{v})\|_{\widetilde{L}^\infty_{T}(B^{\sigma}_{2,1})}
\nonumber\\&&+\lambda_{1}\Big\{\Big\|\frac{1}{\sqrt{\tau}}\mathbf{v}\Big\|_{\widetilde{L}^2_{T}(B^{\sigma}_{2,1})}
+\Big\|\sqrt{\tau}\nabla\varrho\Big\|_{\widetilde{L}^2_{T}(B^{\sigma-1}_{2,1})}\Big\}
\nonumber\\&\leq&
C_{1}\|(\varrho_{0},\mathbf{v}_{0})\|_{B^{\sigma}_{2,1}}.\label{R-E11}
\end{eqnarray}
\end{prop}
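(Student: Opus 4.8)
The plan is to derive the energy estimate \eqref{R-E11} directly from the symmetric form \eqref{R-E8} by a frequency-localized (Littlewood--Paley) energy method, splitting into low and high frequencies to capture the dissipative structure of the damped hyperbolic system. First I would apply $\Delta_q$ to \eqref{R-E8}, obtaining
\begin{eqnarray*}
\partial_t\Delta_q\varrho+\bar\psi\,\mathrm{div}\,\Delta_q\mathbf v&=&\Delta_q F_1,\\
\partial_t\Delta_q\mathbf v+\bar\psi\,\nabla\Delta_q\varrho+\tfrac1\tau\Delta_q\mathbf v&=&\Delta_q F_2,
\end{eqnarray*}
where $F_1=-\mathbf v\cdot\nabla\varrho-\tfrac{\gamma-1}{2}\varrho\,\mathrm{div}\,\mathbf v$ and $F_2=-\mathbf v\cdot\nabla\mathbf v-\tfrac{\gamma-1}{2}\varrho\,\nabla\varrho$. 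Taking the $L^2$ inner product with $(\Delta_q\varrho,\Delta_q\mathbf v)$, the symmetric coupling terms $\bar\psi\,\mathrm{div}\,\Delta_q\mathbf v$ and $\bar\psi\,\nabla\Delta_q\varrho$ cancel after integration by parts, and the damping term yields $\tfrac1\tau\|\Delta_q\mathbf v\|_{L^2}^2$. This gives the basic $L^2$-in-$x$ identity controlling $\tfrac{d}{dt}\|(\Delta_q\varrho,\Delta_q\mathbf v)\|_{L^2}^2$, with a damping gain only on $\mathbf v$.

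To recover dissipation on $\varrho$ as well (needed for the $\sqrt\tau\,\nabla\varrho$ term in \eqref{R-E11}), the second step is to introduce the cross term $\int\nabla\Delta_q\varrho\cdot\Delta_q\mathbf v\,dx$: differentiating it in time and using both equations produces a term $-\bar\psi\|\nabla\Delta_q\varrho\|_{L^2}^2$ plus controllable remainders (including $\tfrac1\tau$ times the cross term itself, absorbed by Young's inequality, and the $\|\mathrm{div}\,\Delta_q\mathbf v\|_{L^2}^2$ term which is $\le\|\nabla\Delta_q\mathbf v\|_{L^2}^2$ and must be dominated by the damping after frequency splitting). I would then form a Lyapunov functional $\mathcal L_q^2:=\|(\Delta_q\varrho,\Delta_q\mathbf v)\|_{L^2}^2+\kappa\,\eta_q\int\nabla\Delta_q\varrho\cdot\Delta_q\mathbf v$ with a small constant $\kappa$ and a frequency weight $\eta_q$ (e.g.\ $\eta_q=\min(\tau,2^{-2q}\tau^{-1})$ or the scaling dictated by balancing $2^{2q}$ against $1/\tau$) chosen so that $\mathcal L_q\simeq\|(\Delta_q\varrho,\Delta_q\mathbf v)\|_{L^2}$ and
$$\tfrac{d}{dt}\mathcal L_q^2+c\Big(\tfrac1\tau\|\Delta_q\mathbf v\|_{L^2}^2+\tau\,2^{2q}\|\Delta_q\varrho\|_{L^2}^2\Big)\lesssim \|(\Delta_qF_1,\Delta_qF_2)\|_{L^2}\,\mathcal L_q + (\text{commutator terms}).$$
Here the low/high-frequency distinction matters: for high $q$ one uses $2^{2q}\|\Delta_q\varrho\|^2$ to beat $\|\nabla\Delta_q\mathbf v\|^2$, while for low $q$ the factor $\tau$ vs $1/\tau$ is arranged so the weights stay bounded; crucially the constants must not degenerate as $\tau\to0$, which is the whole point of carrying the explicit $\tfrac1{\sqrt\tau}$ and $\sqrt\tau$ weights in \eqref{R-E11}.

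The third step is to handle the nonlinear right-hand side. The terms $\mathbf v\cdot\nabla\varrho$ and $\mathbf v\cdot\nabla\mathbf v$ are the delicate ones because $\Delta_q(\mathbf v\cdot\nabla\cdot)$ is not $\mathbf v\cdot\nabla\Delta_q(\cdot)$; I would write $\Delta_q(\mathbf v\cdot\nabla w)=\mathbf v\cdot\nabla\Delta_q w+[\Delta_q,\mathbf v\cdot\nabla]w$, so that the first piece, after the inner product, contributes $\int(\mathbf v\cdot\nabla\Delta_q w)\Delta_q w=-\tfrac12\int(\mathrm{div}\,\mathbf v)|\Delta_q w|^2$, bounded by $\|\mathrm{div}\,\mathbf v\|_{L^\infty}\|\Delta_q w\|_{L^2}^2\lesssim\|\mathbf v\|_{B^\sigma_{2,1}}\|\Delta_q w\|_{L^2}^2$ using $B^\sigma_{2,1}\hookrightarrow W^{1,\infty}$. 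The commutator $[\Delta_q,\mathbf v\cdot\nabla]w$ is exactly the object controlled by Lemma~\ref{lem2.3} (with $\mathcal A=\mathrm{div}$ acting after Bony decomposition), giving $2^{q\sigma}\|[\Delta_q,\mathbf v]\cdot\nabla w\|_{L^2_T(L^2)}\le Cc_q\|\mathbf v\|_{\widetilde L^{\theta_1}_T(B^\sigma_{2,1})}\|w\|_{\widetilde L^{\theta_2}_T(B^\sigma_{2,1})}$ with $(c_q)\in\ell^1$. The quadratic terms $\varrho\,\mathrm{div}\,\mathbf v$ and $\varrho\,\nabla\varrho$ are estimated by the product law Proposition~\ref{prop2.4} in Chemin--Lerner spaces. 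Multiplying the resulting differential inequality for $\mathcal L_q$ by $2^{q\sigma}$, summing over $q\ge-1$ in $\ell^1$, and integrating in time yields
$$\|(\varrho,\mathbf v)\|_{\widetilde L^\infty_T(B^\sigma_{2,1})}+\lambda_1\Big(\big\|\tfrac1{\sqrt\tau}\mathbf v\big\|_{\widetilde L^2_T(B^\sigma_{2,1})}+\big\|\sqrt\tau\,\nabla\varrho\big\|_{\widetilde L^2_T(B^{\sigma-1}_{2,1})}\Big)\le C\|(\varrho_0,\mathbf v_0)\|_{B^\sigma_{2,1}}+C\|(\varrho,\mathbf v)\|_{\widetilde L^\infty_T(B^\sigma_{2,1})}\cdot(\text{same left-hand norms}),$$
and the smallness hypothesis \eqref{R-E10} with $\delta_1$ small lets the last term be absorbed, proving \eqref{R-E11}.

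I expect the main obstacle to be the $\tau$-uniform construction of the weights $\eta_q$ in the Lyapunov functional so that (i) $\mathcal L_q$ stays equivalent to the plain $L^2$ norm with constants independent of $\tau$ and $q$, (ii) the cross-term manipulation genuinely produces dissipation $\tau 2^{2q}\|\Delta_q\varrho\|^2$ rather than something that blows up at low frequencies, and (iii) the parasitic $\|\nabla\Delta_q\mathbf v\|^2$ (of order $2^{2q}$, not obviously smaller than the $\tfrac1\tau$-damping at high frequency when $\tau$ is tiny) is dominated. This is precisely the point where the paper departs from the energy method of \cite{CG}: the high/low frequency split must be made in terms of the threshold $2^q\sim 1/\tau$, and one must check the estimate separately in the two regimes and glue them. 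A secondary technical point is ensuring the commutator estimate of Lemma~\ref{lem2.3} is applied with time exponents $\theta_1,\theta_2$ chosen ($\theta_1=\infty$, $\theta_2=2$ or vice versa, and also the diagonal $\theta_1=\theta_2=2$) to land the nonlinear terms in $\widetilde L^2_T$ so they can be absorbed by the dissipative left-hand side rather than only by $\widetilde L^\infty_T$.
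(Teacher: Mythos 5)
Your overall blueprint is in the spirit of the paper's proof — localize with $\Delta_q$, take $L^2$ inner products to get the basic damping on $\mathbf v$, and then bring in a cross-term to recover dissipation on $\nabla\varrho$ — but there are two substantial points of divergence, one of which is a genuine gap.

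On the cross term: the paper does \emph{not} build a Lyapunov functional $\mathcal L_q^2=\|(\Delta_q\varrho,\Delta_q\mathbf v)\|_{L^2}^2+\kappa\eta_q\int\nabla\Delta_q\varrho\cdot\Delta_q\mathbf v$ with a $\tau$- and $q$-dependent weight. Instead it isolates $\bar\psi\nabla\varrho$ algebraically from the momentum equation, multiplies by $\tau\Delta_q\nabla\varrho$, integrates, and handles the $\tau\int\Delta_q\partial_t\mathbf v\cdot\Delta_q\nabla\varrho$ term by integrating by parts in space \emph{and then in time}, substituting the mass equation for $\partial_t\varrho$. The time boundary term $\tau\big[\int\Delta_q\mathrm{div}\mathbf v\,\Delta_q\varrho\big]_0^T$ is bounded by the $\widetilde L^\infty_T$ norms, and the parasitic $\tau\|\mathrm{div}\Delta_q\mathbf v\|^2_{L^2_T(L^2)}$ term carries a small factor $\tau$ that makes it absorbable by the already-established $\tfrac1\tau\|\Delta_q\mathbf v\|^2_{L^2_T(L^2)}$ after multiplying by $2^{2q(\sigma-1)}$ and summing. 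This sidesteps the high-frequency weight gymnastics you flag as your obstacle (iii); no threshold $2^q\sim 1/\tau$ is ever introduced. Your Lyapunov route could be made to work, but you should be aware that it is not what the paper does, and it forces you to verify that the weighted dissipation $\eta_q\|\nabla\Delta_q\varrho\|^2$ actually dominates $\tau\|\nabla\Delta_q\varrho\|^2$ in both regimes; the paper's direct computation avoids this delicacy entirely.

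The real gap is at the block $q=-1$. You propose to control the transport contribution by $\|\mathrm{div}\,\mathbf v\|_{L^\infty}\|\Delta_q w\|_{L^2}^2$ and invoke Lemma \ref{lem2.3} for the commutators, uniformly in $q$. This closes for $q\ge 0$ because Bernstein gives $\|\Delta_q w\|_{L^2}\approx 2^{-q}\|\nabla\Delta_q w\|_{L^2}$ and the right-hand side can be absorbed into the $\sqrt\tau\,\nabla\varrho$ dissipation. At $q=-1$ it does \emph{not} close: the only dissipative quantities available are $\tfrac1\tau\|\Delta_{-1}\mathbf v\|^2_{L^2_T(L^2)}$ and $\tau\|\Delta_{-1}\nabla\varrho\|^2_{L^2_T(L^2)}$, whereas your estimate produces $\|\Delta_{-1}\varrho\|^2_{L^2}$, which Bernstein cannot convert to $\|\Delta_{-1}\nabla\varrho\|^2_{L^2}$ (the lower Bernstein bound fails on the ball supporting $\Delta_{-1}$). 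Integrating in time then yields a term proportional to $\|\mathbf v\|_{L^1_T(B^\sigma_{2,1})}\|\Delta_{-1}\varrho\|^2_{L^\infty_T(L^2)}$, which cannot be bounded uniformly in $T$ by the available quantities. This is precisely the difficulty that the paper highlights in Remark \ref{rem3.1}'s companion Remark 1.1 and solves by re-pairing the $q=-1$ block in $L^{2d/(d+2)}$ versus $L^{2d/(d-2)}$ and invoking Gagliardo--Nirenberg--Sobolev to trade $\|\Delta_{-1}\varrho\|_{L^{2d/(d-2)}}$ for $\|\Delta_{-1}\nabla\varrho\|_{L^2}$; that in turn requires the new commutator estimates of Proposition \ref{prop6.1} and Corollary \ref{cor6.2} in $L^{2d/(d+2)}$, which Lemma \ref{lem2.3} does not supply. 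Your proposal omits this ingredient, and without it the low-frequency part of the \textit{a priori} estimate does not close.
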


\begin{proof}
The proof of Proposition \ref{prop4.1}, in fact, is to capture the
dissipation rates of $(\varrho,\textbf{v})$ in turn by using the
low- and high-frequency decomposition methods, so we divide it into
several steps.\\

\textbf{Step 1. The $\widetilde{L}^\infty_{T}(B^{\sigma}_{2,1})$
estimate of $(\varrho,\textbf{v})$ and the
$\widetilde{L}^2_{T}(B^{\sigma}_{2,1})$ one of $\textbf{v}$;}

Firstly, we complete the proof of step 1. Applying the localization
operator $\Delta_{q}$ to (\ref{R-E8}) yields
\begin{equation}
\left\{
\begin{array}{l}
\partial_{t}\Delta_{q}\varrho+\bar{\psi}\Delta_{q}\mbox{div}\textbf{v}+(\textbf{v}\cdot\nabla)\Delta_{q}\varrho=[\textbf{v},\Delta_{q}]\cdot\nabla
\varrho-\frac{\gamma-1}{2}\Delta_{q}(\varrho\mbox{div}\textbf{v}),\\
 \partial_{t}\Delta_{q}\textbf{v}+\bar{\psi}\Delta_{q}\nabla
\varrho+(\textbf{v}\cdot\nabla)\Delta_{q}\textbf{v}+\frac{\Delta_{q}\textbf{v}}{\tau}
=[\textbf{v},\Delta_{q}]\cdot\nabla\textbf{v}-\frac{\gamma-1}{2}\Delta_{q}(\varrho\nabla
 \varrho),
\end{array} \right. \label{R-E12}
\end{equation}
where the commutator$[\cdot,\cdot]$ is defined by $[f,g]=fg-gf.$

Multiplying the first equation of (\ref{R-E12}) by
$\Delta_{q}\varrho$ and the second one by $\Delta_{q}\textbf{v}$
respectively, then integrating them over $\mathbb{R}^{d}$, we get
\begin{eqnarray}
&&\frac{1}{2}\frac{d}{dt}\Big(\|\Delta_{q}\varrho\|^2_{L^2}+\|\Delta_{q}\textbf{v}\|^2_{L^2}\Big)+\frac{1}{\tau}\|\Delta_{q}\textbf{v}\|^2_{L^2}\nonumber\\
&=&\frac{1}{2}\int\mathrm{div}\textbf{v}(|\Delta_{q}\varrho|^2+|\Delta_{q}\textbf{v}|^2)
+\int\{[\textbf{v},\Delta_{q}]\cdot\nabla
\varrho\Delta_{q}\varrho+[\textbf{v},\Delta_{q}]\cdot\nabla\textbf{v}\Delta_{q}\textbf{v}\}
\nonumber\\&&+\frac{\gamma-1}{2}\int\Delta_{q}\varrho(\nabla
\varrho\cdot\Delta_{q}v)+\frac{\gamma-1}{2}\int[\varrho,\Delta_{q}]\nabla\varrho\cdot\Delta_{q}\textbf{v}
+\frac{\gamma-1}{2}\int[\varrho,\Delta_{q}]\mathrm{div}\textbf{v}\Delta_{q}\varrho.\label{R-E13}
\end{eqnarray}
In what follows, we first bound the low frequency part of the
quality (\ref{R-E13}). By performing integration by parts and using
H\"{o}lder- and Gagliardo-Nirenberg-Sobolev inequalities, we have
$(d\geq3)$
\begin{eqnarray}
&&\frac{1}{2}\frac{d}{dt}\Big(\|\Delta_{-1}\varrho\|^2_{L^2}+\|\Delta_{-1}\textbf{v}\|^2_{L^2}\Big)+\frac{1}{\tau}\|\Delta_{-1}\textbf{v}\|^2_{L^2}\nonumber\\
&\leq&\|\textbf{v}\|_{L^{d}}\|\Delta_{-1}\varrho\|_{L^{\frac{2d}{d-2}}}\|\Delta_{-1}\nabla
\varrho\|_{L^2}+\|\nabla
\textbf{v}\|_{L^{\infty}}\|\Delta_{-1}\textbf{v}\|^2_{L^2}
\nonumber\\&&+\|[\textbf{v},\Delta_{-1}]\cdot\nabla
\varrho\|_{L^{\frac{2d}{d+2}}}\|\Delta_{-1}\varrho\|_{L^{\frac{2d}{d-2}}}+\|[\textbf{v},\Delta_{-1}]\cdot\nabla\textbf{v}\|_{L^2}\|\Delta_{-1}\textbf{v}\|_{L^2}
\nonumber\\&&+\frac{\gamma-1}{2}\|\nabla\varrho\|_{L^{d}}\|\|\Delta_{-1}\varrho\|_{L^{\frac{2d}{d-2}}}\|\Delta_{-1}\textbf{v}\|_{L^2}
+\frac{\gamma-1}{2}\|[\varrho,\Delta_{-1}]\nabla\varrho\|_{L^2}\|\Delta_{-1}\textbf{v}\|_{L^2}\nonumber\\
&&+\frac{\gamma-1}{2}\|[\varrho,\Delta_{-1}]\mathrm{div}\textbf{v}\|_{L^{\frac{2d}{d+2}}}\|\Delta_{-1}\varrho\|_{L^{\frac{2d}{d-2}}}
\nonumber\\&\leq&\|\textbf{v}\|_{L^{d}}\|\Delta_{-1}\nabla
\varrho\|^2_{L^2}+\|\nabla
\textbf{v}\|_{L^{\infty}}\|\Delta_{-1}\textbf{v}\|^2_{L^2}
+\|[\textbf{v},\Delta_{-1}]\cdot\nabla
\varrho\|_{L^{\frac{2d}{d+2}}}\|\Delta_{-1}\nabla\varrho\|_{L^{2}}\nonumber\\&&+\|[\textbf{v},\Delta_{-1}]\cdot\nabla\textbf{v}\|_{L^2}\|\Delta_{-1}\textbf{v}\|_{L^2}
+\frac{\gamma-1}{2}\|\nabla\varrho\|_{L^{d}}\|\|\Delta_{-1}\nabla\varrho\|_{L^{2}}\|\Delta_{-1}\textbf{v}\|_{L^2}
\nonumber\\&&+\frac{\gamma-1}{2}\|[\varrho,\Delta_{-1}]\nabla\varrho\|_{L^2}\|\Delta_{-1}\textbf{v}\|_{L^2}
+\frac{\gamma-1}{2}\|[\varrho,\Delta_{-1}]\mathrm{div}\textbf{v}\|_{L^{\frac{2d}{d+2}}}\|\Delta_{-1}\nabla\varrho\|_{L^{2}}.\label{R-E14}
\end{eqnarray} Integrating
(\ref{R-E14}) with respect to $t\in[0,T]$ implies
\begin{eqnarray}
&&\frac{1}{2}\Big(\|\Delta_{-1}\varrho\|^2_{L^2}+\|\Delta_{-1}\textbf{v}\|^2_{L^2}\Big)\Big|^t_0+\frac{1}{\tau}\|\Delta_{-1}\textbf{v}\|^2_{L^2_{t}(L^2)}\nonumber\\
&\leq&\|\textbf{v}\|_{L^2_{t}(L^d)}\|\Delta_{-1}\nabla\varrho\|_{L^2_{t}(L^2)}\|\Delta_{-1}\nabla
\varrho\|_{L^\infty_{t}(L^2)}+\|\nabla
\textbf{v}\|_{L^\infty_{t}(L^{\infty})}\|\Delta_{-1}\textbf{v}\|^2_{L^2_{t}(L^2)}
\nonumber\\&&+\|[\textbf{v},\Delta_{-1}]\cdot\nabla
\varrho\|_{L^2_{t}(L^{\frac{2d}{d+2}})}\|\Delta_{-1}\nabla\varrho\|_{L^2_{t}(L^{2})}+\|[\textbf{v},\Delta_{-1}]\cdot\nabla\textbf{v}\|_{L^2_{t}(L^2)}\|\Delta_{-1}\textbf{v}\|_{L^2_{t}(L^2)}
\nonumber\\&&+\frac{\gamma-1}{2}\|\nabla\varrho\|_{L^{\infty}_{t}(L^{d})}\|\Delta_{-1}\nabla\varrho\|_{L^2_{t}(L^2)}\|\Delta_{-1}\textbf{v}\|_{L^2_{t}(L^2)}
\nonumber\\&&+\frac{\gamma-1}{2}\|[\varrho,\Delta_{-1}]\nabla\varrho\|_{L^2_{t}(L^2)}\|\Delta_{-1}\textbf{v}\|_{L^2_{t}(L^2)}
\nonumber\\&&+\frac{\gamma-1}{2}\|[\varrho,\Delta_{-1}]\mathrm{div}\textbf{v}\|_{L^2_{t}(L^{\frac{2d}{d+2}})}\|\Delta_{-1}\nabla\varrho\|_{L^2_{t}(L^2)}
.\label{R-E15}
\end{eqnarray}
Then multiplying the factor $2^{-2\sigma}$ on both sides of
(\ref{R-E15}), we can get
\begin{eqnarray}
&&\frac{1}{2}2^{-2\sigma}\Big(\|\Delta_{-1}\varrho\|^2_{L^2}+\|\Delta_{-1}\textbf{v}\|^2_{L^2}\Big)+\frac{2^{-2\sigma}}{\tau}\|\Delta_{-1}\textbf{v}\|^2_{L^2_{t}(L^2)}\nonumber\\
&\leq&\frac{2^{-2\sigma}}{2}\Big(\|\Delta_{-1}\varrho_{0}\|^2_{L^2}+\|\Delta_{-1}\textbf{v}_{0}\|^2_{L^2}\Big)
+Cc^2_{-1}\|\textbf{v}\|_{\widetilde{L}^2_{T}(B^{\sigma}_{2,1})}\|\nabla\varrho\|_{\widetilde{L}^2_{T}(B^{\sigma-1}_{2,1})}\|\nabla
\varrho\|_{\widetilde{L}^\infty_{T}(B^{\sigma-1}_{2,1})}\nonumber\\&&+Cc^2_{-1}\|\nabla
\textbf{v}\|_{\widetilde{L}^\infty_{T}(B^{\sigma-1}_{2,1})}\|\textbf{v}\|^2_{\widetilde{L}^2_{T}(B^{\sigma}_{2,1})}+Cc^2_{-1}\|\textbf{v}\|_{\widetilde{L}^\infty_{T}(B^{\sigma}_{2,1})}\|\textbf{v}\|^2_{\widetilde{L}^2_{T}(B^{\sigma-1}_{2,1})}
\nonumber\\&&+Cc^2_{-1}\|\nabla\textbf{v}\|_{\widetilde{L}^2_{T}(B^{\sigma-1}_{2,1})}\|\varrho\|_{\widetilde{L}^\infty_{T}(B^{\sigma}_{2,1})}
\|\nabla\varrho\|_{\widetilde{L}^2_{T}(B^{\sigma-1}_{2,1})}
\nonumber\\&&+Cc^2_{-1}\|\nabla\varrho\|_{\widetilde{L}^{\infty}_{T}(B^{\sigma-1}_{2,1})}\|\nabla\varrho\|_{\widetilde{L}^2_{T}(B^{\sigma-1}_{2,1})}\|\textbf{v}\|_{\widetilde{L}^2_{T}(B^{\sigma}_{2,1})}
\nonumber\\&&+Cc^2_{-1}\|\varrho\|_{\widetilde{L}^\infty_{T}(B^{\sigma}_{2,1})}\|\nabla\varrho\|_{\widetilde{L}^2_{T}(B^{\sigma-1}_{2,1})}\|\textbf{v}\|_{\widetilde{L}^2_{T}(B^{\sigma}_{2,1})}
\nonumber\\&&+Cc^2_{-1}\|\nabla\varrho\|_{\widetilde{L}^\infty_{T}(B^{\sigma-1}_{2,1})}\|\textbf{v}\|_{\widetilde{L}^2_{T}(B^{\sigma}_{2,1})}\|\nabla\varrho\|_{\widetilde{L}^2_{T}(B^{\sigma-1}_{2,1})}
,\label{R-E16}
\end{eqnarray}
where we used Remark \ref{rem2.2}, Lemma \ref{lem2.3} and Corollary
\ref{cor6.2} which will be shown in the Appendix. Here and below
$C>0$ denotes a uniform constant independent of $\tau$; $\{c_{-1}\}$
denotes some sequence which satisfies $\|(c_{-1})\|_{ {l^{1}}}\leq
1$ although each $\{c_{-1}\}$ is possibly different in
(\ref{R-E16}).

Next, we turn to estimate the high-frequency part ($q\geq0$) of the
quality (\ref{R-E13}). With the aid of Cauchy-Schwartz inequality,
we have
\begin{eqnarray}
&&\frac{1}{2}\frac{d}{dt}\Big(\|\Delta_{q}\varrho\|^2_{L^2}+\|\Delta_{q}\textbf{v}\|^2_{L^2}\Big)+\frac{1}{\tau}\|\Delta_{q}\textbf{v}\|^2_{L^2}\nonumber\\
&\leq&\frac{1}{2}\|\nabla\textbf{v}\|_{L^\infty}(\|\Delta_{q}\varrho\|^2_{L^2}+\|\Delta_{q}\textbf{v}\|^2_{L^2})
+\|[\textbf{v},\Delta_{q}]\cdot\nabla
\varrho\|_{L^2}\|\Delta_{q}\varrho\|_{L^2}\nonumber\\&&+\|[\textbf{v},\Delta_{q}]\cdot\nabla\textbf{v}\|_{L^2}\|\Delta_{q}\textbf{v}\|_{L^2}
+\frac{\gamma-1}{2}\|\nabla
\varrho\|_{L^\infty}\|\Delta_{q}\varrho\|_{L^2}\|\Delta_{q}\textbf{v}\|_{L^2}\nonumber\\&&+\frac{\gamma-1}{2}\|[\varrho,\Delta_{q}]\nabla\varrho\|_{L^2}\|\Delta_{q}\textbf{v}\|_{L^2}
+\frac{\gamma-1}{2}\|[\varrho,\Delta_{q}]\mathrm{div}\textbf{v}\|_{L^2}\|\Delta_{q}\varrho\|_{L^2}.\label{R-E17}
\end{eqnarray}
By integrating (\ref{R-E17}) with respect to $t\in[0,T]$, we arrive
at
\begin{eqnarray}
&&\frac{1}{2}\Big(\|\Delta_{q}\varrho\|^2_{L^2}+\|\Delta_{q}\textbf{v}\|^2_{L^2}\Big)\Big|^t_0+\frac{1}{\tau}\|\Delta_{q}\textbf{v}\|^2_{L^2_{t}(L^2)}\nonumber\\
&\leq&\frac{1}{2}\|\nabla\textbf{v}\|_{L^2_{t}(L^\infty)}(\|\Delta_{q}\varrho\|_{L^2_{t}(L^2)}\|\Delta_{q}\varrho\|_{L^\infty_{t}(L^2)}+\|\Delta_{q}\textbf{v}\|_{L^2_{t}(L^2)}\|\Delta_{q}\textbf{v}\|_{L^\infty_{t}(L^2)})
\nonumber\\&&+\|[\textbf{v},\Delta_{q}]\cdot\nabla
\varrho\|_{L^2_{t}(L^2)}\|\Delta_{q}\varrho\|_{L^2_{t}(L^2)}+\|[\textbf{v},\Delta_{q}]\cdot\nabla\textbf{v}\|_{L^2_{t}(L^2)}\|\Delta_{q}\textbf{v}\|_{L^2_{t}(L^2)}
\nonumber\\&&+\frac{\gamma-1}{2}\|\nabla
\varrho\|_{L^\infty_{t}(L^\infty)}\|\Delta_{q}\varrho\|_{L^2_{t}(L^2)}\|\Delta_{q}\textbf{v}\|_{L^2_{t}(L^2)}\nonumber\\&&+\frac{\gamma-1}{2}\|[\varrho,\Delta_{q}]\nabla\varrho\|_{L^2_{t}(L^2)}\|\Delta_{q}\textbf{v}\|_{L^2_{t}(L^2)}
\nonumber\\&&+\frac{\gamma-1}{2}\|[\varrho,\Delta_{q}]\mathrm{div}\textbf{v}\|_{L^2_{t}(L^2)}\|\Delta_{q}\varrho\|_{L^2_{t}(L^2)}.\label{R-E18}
\end{eqnarray}
Then multiplying the factor $2^{2q\sigma}$ on both sides of
(\ref{R-E18}) and using Lemma \ref{lem2.3}, we obtain
\begin{eqnarray}
&&\frac{1}{2}2^{2q\sigma}\Big(\|\Delta_{q}\varrho\|^2_{L^2}+\|\Delta_{q}\textbf{v}\|^2_{L^2}\Big)+\frac{2^{2q\sigma}}{\tau}\|\Delta_{q}\textbf{v}\|^2_{L^2_{t}(L^2)}\nonumber\\
&\leq&\frac{1}{2}2^{2q\sigma}\Big(\|\Delta_{q}\varrho_{0}\|^2_{L^2}+\|\Delta_{q}\textbf{v}_{0}\|^2_{L^2}\Big)
+Cc_{q}^2\|\nabla\textbf{v}\|_{\widetilde{L}^2_{T}(B^{\sigma-1}_{2,1})}(\|\nabla\varrho\|_{\widetilde{L}^2_{T}(B^{\sigma-1}_{2,1})}\|\varrho\|_{\widetilde{L}^\infty_{T}(B^{\sigma}_{2,1})}\nonumber\\&&
+\|\textbf{v}\|_{\widetilde{L}^2_{T}(B^{\sigma}_{2,1})}\|\textbf{v}\|_{\widetilde{L}^\infty_{T}(B^{\sigma}_{2,1})})
+Cc_{q}^2\|\textbf{v}\|_{\widetilde{L}^2_{T}(B^{\sigma}_{2,1})}\|\varrho\|_{\widetilde{L}^\infty_{T}(B^{\sigma}_{2,1})}\|\nabla\varrho\|_{\widetilde{L}^2_{T}(B^{\sigma-1}_{2,1})}
\nonumber\\&&+Cc_{q}^2\|\textbf{v}\|_{\widetilde{L}^\infty_{T}(B^{\sigma}_{2,1})}\|\textbf{v}\|^2_{\widetilde{L}^2_{T}(B^{\sigma}_{2,1})}
+Cc_{q}^2\|\nabla
\varrho\|_{\widetilde{L}^\infty_{T}(B^{\sigma-1}_{2,1})}\|\nabla\varrho\|_{\widetilde{L}^2_{T}(B^{\sigma-1}_{2,1})}\|\textbf{v}\|_{\widetilde{L}^2_{T}(B^{\sigma}_{2,1})}
\nonumber\\&&+Cc_{q}^2\|\nabla\varrho\|_{\widetilde{L}^\infty_{T}(B^{\sigma-1}_{2,1})}\|\nabla\varrho\|_{\widetilde{L}^2_{T}(B^{\sigma-1}_{2,1})}\|\textbf{v}\|_{\widetilde{L}^2_{T}(B^{\sigma}_{2,1})}
\nonumber\\&&+Cc_{q}^2\|\nabla\varrho\|_{\widetilde{L}^\infty_{T}(B^{\sigma-1}_{2,1})}\|\textbf{v}\|_{\widetilde{L}^2_{T}(B^{\sigma}_{2,1})}\|\nabla\varrho\|_{\widetilde{L}^2_{T}(B^{\sigma-1}_{2,1})},\label{R-E19}
\end{eqnarray}
where we have used the fact $\|\Delta_{q}\nabla
f\|_{L^2}\approx2^{q}\|\Delta_{q}f\|_{L^2}(q\geq0)$ derived by Lemma
\ref{lem2.1}. The constant $C>0$ is a uniform constant independent
of $\tau$; $\{c_{q}\}$ denotes some sequence which satisfies
$\|(c_{q})\|_{ {l^{1}}}\leq 1$ although each $\{c_{q}\}$ is possibly
different in (\ref{R-E19}).

To conclude, combining (\ref{R-E16}) with (\ref{R-E19}) gives
\begin{eqnarray}
&&\frac{1}{2}2^{2q\sigma}\Big(\|\Delta_{q}\varrho\|^2_{L^2}+\|\Delta_{q}\textbf{v}\|^2_{L^2}\Big)+\frac{2^{2q\sigma}}{\tau}\|\Delta_{q}\textbf{v}\|^2_{L^2_{t}(L^2)}\nonumber\\
&\leq&\frac{1}{2}2^{2q\sigma}\Big(\|\Delta_{q}\varrho_{0}\|^2_{L^2}+\|\Delta_{q}\textbf{v}_{0}\|^2_{L^2}\Big)
+Cc_{q}^2\|\textbf{v}\|_{\widetilde{L}^2_{T}(B^{\sigma}_{2,1})}(\|\nabla\varrho\|_{\widetilde{L}^2_{T}(B^{\sigma-1}_{2,1})}\|\varrho\|_{\widetilde{L}^\infty_{T}(B^{\sigma}_{2,1})}\nonumber\\&&
+\|\textbf{v}\|_{\widetilde{L}^2_{T}(B^{\sigma}_{2,1})}\|\textbf{v}\|_{\widetilde{L}^\infty_{T}(B^{\sigma}_{2,1})})
+Cc_{q}^2\|\textbf{v}\|_{\widetilde{L}^\infty_{T}(B^{\sigma}_{2,1})}\|\textbf{v}\|^2_{\widetilde{L}^2_{T}(B^{\sigma}_{2,1})}
\nonumber\\&&+Cc_{q}^2\|
\varrho\|_{\widetilde{L}^\infty_{T}(B^{\sigma}_{2,1})}\|\nabla\varrho\|_{\widetilde{L}^2_{T}(B^{\sigma-1}_{2,1})}\|\textbf{v}\|_{\widetilde{L}^2_{T}(B^{\sigma}_{2,1})}
\ (q\geq-1).\label{R-E20}
\end{eqnarray}
By Young's inequality, we have
\begin{eqnarray}
&&2^{q\sigma}\Big(\|\Delta_{q}\varrho\|_{L^\infty_{T}(L^2)}+\|\Delta_{q}\textbf{v}\|_{L^\infty_{T}(L^2)}\Big)+\frac{\mu_{1}2^{q\sigma}}{\sqrt{\tau}}\|\Delta_{q}\textbf{v}\|_{L^2_{T}(L^2)}\nonumber\\
&\leq&C2^{q\sigma}\Big(\|\Delta_{q}\varrho_{0}\|_{L^2}+\|\Delta_{q}\textbf{v}_{0}\|_{L^2}\Big)
+Cc_{q}\sqrt{\|\varrho\|_{\widetilde{L}^\infty_{T}(B^{\sigma}_{2,1})}}\Big(\frac{1}{\sqrt{\tau}}\|\textbf{v}\|_{\widetilde{L}^2_{T}(B^{\sigma}_{2,1})}+\sqrt{\tau}\|\nabla\varrho\|_{\widetilde{L}^2_{T}(B^{\sigma-1}_{2,1})}\Big)\nonumber\\&&
+Cc_{q}\sqrt{\|\textbf{v}\|_{\widetilde{L}^\infty_{T}(B^{\sigma}_{2,1})}}\frac{1}{\sqrt{\tau}}\|\textbf{v}\|_{\widetilde{L}^2_{T}(B^{\sigma}_{2,1})}
\ (q\geq-1),\label{R-E21}
\end{eqnarray}
where $\mu_{1}$ is a positive constant independent of $\tau$.

Summing up (\ref{R-E21}) on $q\geq-1$, we immediately get
\begin{eqnarray}
&&\|(\varrho,\textbf{v})\|_{\widetilde{L}^\infty_{T}(B^{\sigma}_{2,1})}+\frac{\mu_{1}}{\sqrt{\tau}}\|\textbf{v}\|_{\widetilde{L}^2_{T}(B^{\sigma}_{2,1})}\nonumber\\
&\leq&C\|(\varrho_{0},\textbf{v}_{0})\|_{B^{\sigma}_{2,1}}
+C\sqrt{\|\varrho\|_{\widetilde{L}^\infty_{T}(B^{\sigma}_{2,1})}}\Big(\frac{1}{\sqrt{\tau}}\|\textbf{v}\|_{\widetilde{L}^2_{T}(B^{\sigma}_{2,1})}+\sqrt{\tau}\|\nabla\varrho\|_{\widetilde{L}^2_{T}(B^{\sigma-1}_{2,1})}\Big)\nonumber\\&&
+C\sqrt{\|\textbf{v}\|_{\widetilde{L}^\infty_{T}(B^{\sigma}_{2,1})}}\frac{1}{\sqrt{\tau}}\|\textbf{v}\|_{\widetilde{L}^2_{T}(B^{\sigma}_{2,1})}.\label{R-E22}
\end{eqnarray}

\textbf{Step 2. The $\widetilde{L}^2_{T}(B^{\sigma-1}_{2,1})$
estimate of $\nabla\varrho$.}

Using the second equation of (\ref{R-E8}), we have
\begin{eqnarray}
\bar{\psi}\nabla
\varrho=-\Big(\partial_{t}\textbf{v}+\frac{1}{\tau}\textbf{v}+\textbf{v}\cdot\nabla\textbf{v}+\frac{\gamma-1}{2}\varrho\nabla
\varrho\Big)\label{R-E23}.
\end{eqnarray}
Apply the operator $\Delta_{q}$ to (\ref{R-E23}) to get
\begin{eqnarray}
&&\bar{\psi}\tau\Delta_{q}\nabla
\varrho\nonumber\\&=&-\Big(\tau\Delta_{q}\partial_{t}\textbf{v}+\Delta_{q}\textbf{v}-\tau[\textbf{v},\Delta_{q}]\nabla\textbf{v}+\tau\textbf{v}\cdot\Delta_{q}\nabla\textbf{v}
\nonumber\\&&-\frac{\gamma-1}{2}\tau[\varrho,\Delta_{q}]\nabla
\varrho+\frac{\gamma-1}{2}\tau\varrho\Delta_{q}\nabla\varrho\Big).\label{R-E24}
\end{eqnarray}
Integrating the resulting equality over $\mathbb{R}^{d}$ after
multiplying $\Delta_{q}\nabla\varrho$, we have
\begin{eqnarray}
&&\bar{\psi}\tau\|\Delta_{q}\nabla
\varrho\|^2_{L^2}\nonumber\\&=&-\int\Big(\tau\Delta_{q}\partial_{t}\textbf{v}+\Delta_{q}\textbf{v}-\tau[\textbf{v},\Delta_{q}]\nabla\textbf{v}+\tau\textbf{v}\cdot\Delta_{q}\nabla\textbf{v}
\nonumber\\&&-\frac{\gamma-1}{2}\tau[\varrho,\Delta_{q}]\nabla
\varrho+\frac{\gamma-1}{2}\tau\varrho\Delta_{q}\nabla\varrho\Big)\cdot\Delta_{q}\nabla\varrho
\label{R-E25},
\end{eqnarray}
where the first integral can be estimated as
\begin{eqnarray}
-\tau\int\Delta_{q}\partial_{t}\textbf{v}\cdot\Delta_{q}\nabla\varrho&=&\tau\int\Delta_{q}\mathrm{div}\partial_{t}\textbf{v}\Delta_{q}\varrho
\nonumber\\&=&\tau\frac{d}{dt}\int\Delta_{q}\mathrm{div}\textbf{v}\Delta_{q}\varrho-\tau\int\Delta_{q}\mathrm{div}\textbf{v}\Delta_{q}\partial_{t}\varrho
\nonumber\\&=&\tau\frac{d}{dt}\int\Delta_{q}\mathrm{div}\textbf{v}\Delta_{q}\varrho\nonumber\\&&-\tau\int\Delta_{q}\mathrm{div}\textbf{v}\Delta_{q}\Big(-\bar{\psi}\mathrm{div}\textbf{v}
-\textbf{v}\cdot\nabla\varrho-\frac{\gamma-1}{2}\varrho\mathrm{div}\textbf{v}\Big)
\nonumber\\&\leq&\tau\frac{d}{dt}\int\Delta_{q}\mathrm{div}\textbf{v}\Delta_{q}\varrho+\tau\bar{\psi}\|\mathrm{div}\textbf{v}\|^2_{L^2}
+\tau\|\textbf{v}\|_{L^\infty}\|\Delta_{q}\mathrm{div}\textbf{v}\|_{L^2}\|\Delta_{q}\nabla\varrho\|_{L^2}\nonumber\\&&+\tau\|\Delta_{q}\mathrm{div}\textbf{v}\|_{L^2}
\|[\textbf{v},\Delta_{q}]\nabla\varrho\|_{L^2}+\frac{\gamma-1}{2}\tau\|\varrho\|_{L^\infty}\|\mathrm{div}\textbf{v}\|^2_{L^2}
\nonumber\\&&+\frac{\gamma-1}{2}\tau\|\mathrm{div}\textbf{v}\|_{L^2}\|[\varrho,\Delta_{q}]\mathrm{div}\textbf{v}\|_{L^2}.\label{R-E26}
\end{eqnarray}

\begin{rem}
In the inequality (\ref{R-E26}), the information behind the mass and
momentum equations of (\ref{R-E8}) help us eventually to estimate
the term $\partial_{t}\textbf{v}$ well. Otherwise, as in \cite{FX},
we have to establish an auxiliary inequality with respect to the
variable $(\varrho_{t},\textbf{v}_{t})$ to close the {\it a priori}
estimate, which leads to the tedious proof of global existence
consequently.
\end{rem}

Together with (\ref{R-E25})-(\ref{R-E26}), we are led to the
estimate
\begin{eqnarray}
&&\bar{\psi}\tau\|\Delta_{q}\nabla \varrho\|^2_{L^2}
\nonumber\\&\leq&\tau\frac{d}{dt}\int\Delta_{q}\mathrm{div}\textbf{v}\Delta_{q}\varrho+\tau\bar{\psi}\|\mathrm{div}\textbf{v}\|^2_{L^2}
+\|\Delta_{q}\textbf{v}\|_{L^2}\|\Delta_{q}\nabla\varrho\|_{L^2}
\nonumber\\&&+\tau\|\textbf{v}\|_{L^\infty}\|\Delta_{q}\mathrm{div}\textbf{v}\|_{L^2}\|\Delta_{q}\nabla\varrho\|_{L^2}\nonumber\\&&+\tau\|\Delta_{q}\mathrm{div}\textbf{v}\|_{L^2}
\|[\textbf{v},\Delta_{q}]\nabla\varrho\|_{L^2}+\frac{\gamma-1}{2}\tau\|\varrho\|_{L^\infty}\|\Delta_{q}\mathrm{div}\textbf{v}\|^2_{L^2}
\nonumber\\&&+\frac{\gamma-1}{2}\tau\|\Delta_{q}\mathrm{div}\textbf{v}\|_{L^2}\|[\varrho,\Delta_{q}]\mathrm{div}\textbf{v}\|_{L^2}
+\tau\|\textbf{v}\|_{L^\infty}\|\Delta_{q}\nabla\textbf{v}\|_{L^2}\|\Delta_{q}\nabla\varrho\|_{L^2}
\nonumber\\&&+\tau\|[\textbf{v},\Delta_{q}]\nabla\textbf{v}\|_{L^2}\|\Delta_{q}\nabla\varrho\|_{L^2}
+\frac{\gamma-1}{2}\|\varrho\|_{L^\infty}\|\Delta_{q}\nabla\varrho\|^2_{L^2}
\nonumber\\&&+\frac{\gamma-1}{2}\tau\|[\varrho,\Delta_{q}]\nabla\varrho\|_{L^2}\|\Delta_{q}\nabla\varrho\|_{L^2}.\label{R-E27}
\end{eqnarray}
Integrating (\ref{R-E27}) in $t\in[0,T]$ gives
\begin{eqnarray}
&&\bar{\psi}\tau\|\Delta_{q}\nabla \varrho\|^2_{L^2_{t}(L^2)}
\nonumber\\&\leq&\tau\Big(\|\Delta_{q}\mathrm{div}\textbf{v}\|_{L^2}\|\Delta_{q}\varrho\|_{L^2}
+\|\Delta_{q}\mathrm{div}\textbf{v}_{0}\|_{L^2}\|\Delta_{q}\varrho_{0}\|_{L^2}\Big)+\tau\bar{\psi}\|\Delta_{q}\mathrm{div}\textbf{v}\|^2_{L^2_{T}(L^2)}
\nonumber\\&&+\|\Delta_{q}\textbf{v}\|_{L^2_{T}(L^2)}\|\Delta_{q}\nabla\varrho\|_{L^2_{T}(L^2)}
+\tau\|\textbf{v}\|_{L^\infty_{T}(L^\infty)}\|\Delta_{q}\mathrm{div}\textbf{v}\|_{L^2_{T}(L^2)}\|\Delta_{q}\nabla\varrho\|_{L^2_{T}(L^2)}\nonumber\\&&+\tau\|\Delta_{q}\mathrm{div}\textbf{v}\|_{L^2_{T}(L^2)}
\|[\textbf{v},\Delta_{q}]\nabla\varrho\|_{L^2_{T}(L^2)}+\frac{\gamma-1}{2}\tau\|\varrho\|_{L^\infty_{T}(L^\infty)}\|\Delta_{q}\mathrm{div}\textbf{v}\|^2_{L^2_{T}(L^2)}
\nonumber\\&&+\frac{\gamma-1}{2}\tau\|\Delta_{q}\mathrm{div}\textbf{v}\|_{L^2_{T}(L^2)}\|[\varrho,\Delta_{q}]\mathrm{div}\textbf{v}\|_{L^2_{T}(L^2)}
+\tau\|\textbf{v}\|_{L^\infty_{T}(L^\infty)}\|\Delta_{q}\nabla\textbf{v}\|_{L^2_{T}(L^2)}\|\Delta_{q}\nabla\varrho\|_{L^2_{T}(L^2)}
\nonumber\\&&+\tau\|[\textbf{v},\Delta_{q}]\nabla\textbf{v}\|_{L^2_{T}(L^2)}\|\Delta_{q}\nabla\varrho\|_{L^2_{T}(L^2)}
+\frac{\gamma-1}{2}\|\varrho\|_{L^\infty_{T}(L^\infty)}\|\Delta_{q}\nabla\varrho\|^2_{L^2_{T}(L^2)}
\nonumber\\&&+\frac{\gamma-1}{2}\tau\|[\varrho,\Delta_{q}]\nabla\varrho\|_{L^2_{T}(L^2)}\|\Delta_{q}\nabla\varrho\|_{L^2_{T}(L^2)}.\label{R-E28}
\end{eqnarray}
Multiply the factor $2^{2q(\sigma-1)}$ on both sides of
(\ref{R-E28}) to get
\begin{eqnarray}
&&\tau2^{2q(\sigma-1)}\|\Delta_{q}\nabla \varrho\|^2_{L^2_{t}(L^2)}
\nonumber\\&\leq&C\tau
c_{q}^2\Big(\|\mathrm{div}\textbf{v}\|_{\widetilde{L}^{\infty}_{T}(B^{\sigma-1}_{2,1})}\|\varrho\|_{\widetilde{L}^{\infty}_{T}(B^{\sigma-1}_{2,1})}
+\|\mathrm{div}\textbf{v}_{0}\|_{B^{\sigma-1}_{2,1}}\|\varrho_{0}\|_{B^{\sigma-1}_{2,1}}\Big)\nonumber\\&&+C\tau
c_{q}^2\|\mathrm{div}\textbf{v}\|^2_{\widetilde{L}^2_{T}(B^{\sigma-1}_{2,1})}
+Cc_{q}^2\|\textbf{v}\|_{\widetilde{L}^2_{T}(B^{\sigma-1}_{2,1})}\|\nabla\varrho\|_{\widetilde{L}^2_{T}(B^{\sigma-1}_{2,1})}
\nonumber\\&&+C\tau
c_{q}^2\|\textbf{v}\|_{\widetilde{L}^\infty_{T}(B^{\sigma-1}_{2,1})}\|\mathrm{div}\textbf{v}\|_{\widetilde{L}^2_{T}(B^{\sigma-1}_{2,1})}\|\nabla\varrho\|_{\widetilde{L}^2_{T}(B^{\sigma-1}_{2,1})}\nonumber\\&&
+C\tau
c_{q}^2\|\varrho\|_{\widetilde{L}^{\infty}_{T}(B^{\sigma}_{2,1})}\|\textbf{v}\|^2_{\widetilde{L}^2_{T}(B^{\sigma-1}_{2,1})}
+C\tau
c_{q}^2\|\textbf{v}\|_{\widetilde{L}^{\infty}_{T}(B^{\sigma}_{2,1})}\|\textbf{v}\|_{\widetilde{L}^2_{T}(B^{\sigma}_{2,1})}\|\nabla
m\|_{\widetilde{L}^2_{T}(B^{\sigma-1}_{2,1})}\nonumber\\&&+C\tau
c_{q}^2\|\varrho\|_{\widetilde{L}^{\infty}_{T}(B^{\sigma}_{2,1})}\|\nabla\varrho\|^2_{\widetilde{L}^2_{T}(B^{\sigma-1}_{2,1})},\label{R-E29}
\end{eqnarray}
where we used Lemma \ref{lem2.3}, and $\{c_{q}\}$ denotes some
sequence which satisfies $\|(c_{q})\|_{l^1}\leq 1$.

Then it follows from Young's inequality that
\begin{eqnarray}
&&\sqrt{\tau}2^{q(\sigma-1)}\|\Delta_{q}\nabla
\varrho\|_{L^2_{T}(L^2)}\nonumber\\&\leq&
Cc_{q}\Big(\|(\varrho,\textbf{v})\|_{\widetilde{L}^{\infty}_{T}(B^{\sigma}_{2,1})}
+\|(\varrho_{0},\textbf{v}_{0})\|_{B^{\sigma}_{2,1}}\Big)+\frac{Cc_{q}}{\sqrt{\tau}}\|
\textbf{v}\|_{\widetilde{L}^2_{T}(B^{\sigma}_{2,1})}\nonumber\\&& +
Cc_{q}\sqrt{\|\textbf{v}\|_{\widetilde{L}^\infty_{T}(B^{\sigma}_{2,1})}}
\Big(\frac{1}{\sqrt{\tau}}\|\textbf{v}\|_{\widetilde{L}^2_{T}(B^{\sigma}_{2,1})}
+\sqrt{\tau}\|\nabla\varrho\|_{\widetilde{L}^2_{T}(B^{\sigma-1}_{2,1})}\Big)\nonumber\\&&
+Cc_{q}\sqrt{\|\varrho\|_{\widetilde{L}^\infty_{T}(B^{\sigma}_{2,1})}}
\frac{1}{\sqrt{\tau}}\|\textbf{v}\|_{\widetilde{L}^2_{T}(B^{\sigma}_{2,1})}
\nonumber\\&&+Cc_{q}\sqrt{\|\varrho\|_{\widetilde{L}^\infty_{T}(B^{\sigma}_{2,1})}}
\sqrt{\tau}\|\nabla\varrho\|_{\widetilde{L}^2_{T}(B^{\sigma-1}_{2,1})},
\label{R-E30}
\end{eqnarray}
where we have used the smallness of $\tau(0<\tau\leq1)$.

Finally, summing up (\ref{R-E30}) on $q\geq-1$, we deduce that
\begin{eqnarray}
&&\sqrt{\tau}\|\nabla
\varrho\|_{\widetilde{L}^2_{T}(B^{\sigma-1}_{2,1})}\nonumber\\&\leq&
C\Big(\|(\varrho,\textbf{v})\|_{\widetilde{L}^{\infty}_{T}(B^{\sigma}_{2,1})}
+\|(\varrho_{0},\textbf{v}_{0})\|_{B^{\sigma}_{2,1}}\Big)+\frac{C}{\sqrt{\tau}}\|
\textbf{v}\|_{\widetilde{L}^2_{T}(B^{\sigma}_{2,1})}\nonumber\\&& +
C\sqrt{\|\textbf{v}\|_{\widetilde{L}^\infty_{T}(B^{\sigma}_{2,1})}}
\Big(\frac{1}{\sqrt{\tau}}\|\textbf{v}\|_{\widetilde{L}^2_{T}(B^{\sigma}_{2,1})}
+\sqrt{\tau}\|\nabla\varrho\|_{\widetilde{L}^2_{T}(B^{\sigma-1}_{2,1})}\Big)\nonumber\\&&
+C\sqrt{\|\varrho\|_{\widetilde{L}^\infty_{T}(B^{\sigma}_{2,1})}}
\frac{1}{\sqrt{\tau}}\|\textbf{v}\|_{\widetilde{L}^2_{T}(B^{\sigma}_{2,1})}
+C\sqrt{\|\varrho\|_{\widetilde{L}^\infty_{T}(B^{\sigma}_{2,1})}}
\sqrt{\tau}\|\nabla\varrho\|_{\widetilde{L}^2_{T}(B^{\sigma-1}_{2,1})}.
\label{R-E31}
\end{eqnarray}

\textbf{Step 3. Combining the above analysis.}

Combining with (\ref{R-E22}) and (\ref{R-E31}), we end up with
\begin{eqnarray}
&&\|(\varrho,\textbf{v})\|_{\widetilde{L}^\infty_{T}(B^{\sigma}_{2,1})}+\frac{\mu_{1}}{\sqrt{\tau}}\|\textbf{v}\|_{\widetilde{L}^2_{T}(B^{\sigma}_{2,1})}
+\sqrt{\tau}K\|\nabla
\varrho\|_{\widetilde{L}^2_{T}(B^{\sigma-1}_{2,1})}\nonumber\\
&\leq&C\|(\varrho_{0},\textbf{v}_{0})\|_{B^{\sigma}_{2,1}}
+C\sqrt{\|\varrho\|_{\widetilde{L}^\infty_{T}(B^{\sigma}_{2,1})}}\Big(\frac{1}{\sqrt{\tau}}\|\textbf{v}\|_{\widetilde{L}^2_{T}(B^{\sigma}_{2,1})}+\sqrt{\tau}\|\nabla\varrho\|_{\widetilde{L}^2_{T}(B^{\sigma-1}_{2,1})}\Big)\nonumber\\&&
+C\sqrt{\|\textbf{v}\|_{\widetilde{L}^\infty_{T}(B^{\sigma}_{2,1})}}\frac{1}{\sqrt{\tau}}\|\textbf{v}\|_{\widetilde{L}^2_{T}(B^{\sigma}_{2,1})}
+CK\Big(\|(\varrho,\textbf{v})\|_{\widetilde{L}^{\infty}_{T}(B^{\sigma}_{2,1})}
+\|(\varrho_{0},\textbf{v}_{0})\|_{B^{\sigma}_{2,1}}\Big)\nonumber\\&&+\frac{CK}{\sqrt{\tau}}\|
\textbf{v}\|_{\widetilde{L}^2_{T}(B^{\sigma}_{2,1})}+
CK\sqrt{\|\textbf{v}\|_{\widetilde{L}^\infty_{T}(B^{\sigma}_{2,1})}}
\Big(\frac{1}{\sqrt{\tau}}\|\textbf{v}\|_{\widetilde{L}^2_{T}(B^{\sigma}_{2,1})}
+\sqrt{\tau}\|\nabla\varrho\|_{\widetilde{L}^2_{T}(B^{\sigma-1}_{2,1})}\Big)\nonumber\\&&
+CK\sqrt{\|\varrho\|_{\widetilde{L}^\infty_{T}(B^{\sigma}_{2,1})}}
\frac{1}{\sqrt{\tau}}\|\textbf{v}\|_{\widetilde{L}^2_{T}(B^{\sigma}_{2,1})}
+CK\sqrt{\|\varrho\|_{\widetilde{L}^\infty_{T}(B^{\sigma}_{2,1})}}
\sqrt{\tau}\|\nabla\varrho\|_{\widetilde{L}^2_{T}(B^{\sigma-1}_{2,1})},\label{R-E32}
\end{eqnarray}
where $K>0$ is a uniform constant independent of $\tau$. In order to
eliminate the term
$\|(\varrho,\textbf{v})\|_{\widetilde{L}^{\infty}_{T}(B^{\sigma}_{2,1})}$
and the singular one
$\|\textbf{v}\|_{\widetilde{L}^2_{T}(B^{\sigma}_{2,1})}/\sqrt{\tau}$,
we take the constant $K$ such that
$$0<K\leq\min\Big\{\frac{1}{2C},\frac{\mu_{1}}{2C}\Big\}.$$
Furthermore, it is not difficult to obtain
\begin{eqnarray}
&&\frac{1}{2}\|(\varrho,\textbf{v})\|_{\widetilde{L}^\infty_{T}(B^{\sigma}_{2,1})}+\frac{\mu_{1}}{2\sqrt{\tau}}\|\textbf{v}\|_{\widetilde{L}^2_{T}(B^{\sigma}_{2,1})}
+\sqrt{\tau}K\|\nabla
\varrho\|_{\widetilde{L}^2_{T}(B^{\sigma-1}_{2,1})}\nonumber\\
&\leq&C\|(\varrho_{0},\textbf{v}_{0})\|_{B^{\sigma}_{2,1}}
+C\sqrt{\|\varrho\|_{\widetilde{L}^\infty_{T}(B^{\sigma}_{2,1})}}\Big(\frac{1}{\sqrt{\tau}}\|\textbf{v}\|_{\widetilde{L}^2_{T}(B^{\sigma}_{2,1})}+\sqrt{\tau}\|\nabla\varrho\|_{\widetilde{L}^2_{T}(B^{\sigma-1}_{2,1})}\Big)\nonumber\\&&
+C\sqrt{\|\textbf{v}\|_{\widetilde{L}^\infty_{T}(B^{\sigma}_{2,1})}}\frac{1}{\sqrt{\tau}}\|\textbf{v}\|_{\widetilde{L}^2_{T}(B^{\sigma}_{2,1})}
+C K\|(\varrho_{0},\textbf{v}_{0})\|_{B^{\sigma}_{2,1}}\nonumber\\&&
+CK\sqrt{\|\textbf{v}\|_{\widetilde{L}^\infty_{T}(B^{\sigma}_{2,1})}}
\Big(\frac{1}{\sqrt{\tau}}\|\textbf{v}\|_{\widetilde{L}^2_{T}(B^{\sigma}_{2,1})}
+\sqrt{\tau}\|\nabla\varrho\|_{\widetilde{L}^2_{T}(B^{\sigma-1}_{2,1})}\Big)\nonumber\\&&
+CK\sqrt{\|\varrho\|_{\widetilde{L}^\infty_{T}(B^{\sigma}_{2,1})}}
\Big(\frac{1}{\sqrt{\tau}}\|\textbf{v}\|_{\widetilde{L}^2_{T}(B^{\sigma}_{2,1})}
+\sqrt{\tau}\|\nabla\varrho\|_{\widetilde{L}^2_{T}(B^{\sigma-1}_{2,1})}\Big)
\nonumber\\
&\leq&C\|(\varrho_{0},\textbf{v}_{0})\|_{B^{\sigma}_{2,1}}+C\sqrt{\|(\varrho,\textbf{v})\|_{\widetilde{L}^\infty_{T}(B^{\sigma}_{2,1})}}
\Big(\frac{1}{\sqrt{\tau}}\|\textbf{v}\|_{\widetilde{L}^2_{T}(B^{\sigma}_{2,1})}+\sqrt{\tau}\|\nabla\varrho\|_{\widetilde{L}^2_{T}(B^{\sigma-1}_{2,1})}\Big)
\nonumber\\
&\leq&C\|(\varrho_{0},\textbf{v}_{0})\|_{B^{\sigma}_{2,1}}+C\sqrt{\delta_{1}}
\Big(\frac{1}{\sqrt{\tau}}\|\textbf{v}\|_{\widetilde{L}^2_{T}(B^{\sigma}_{2,1})}+\sqrt{\tau}\|\nabla\varrho\|_{\widetilde{L}^2_{T}(B^{\sigma-1}_{2,1})}\Big)
,\label{R-E33}
\end{eqnarray}
where we have used the \textit{a priori} assumption (\ref{R-E10}) in the last
step of (\ref{R-E33}).

Lastly, we choose the positive constant $\delta_{1}$ satisfying
$$C\sqrt{\delta_{1}}<\min\Big\{\frac{\mu_{1}}{2},K\Big\},$$ then the
desired inequality  (\ref{R-E11}) follows immediately.
\end{proof}

With the help of the standard boot-strap argument, for instance, see
\cite{MN}, Theorem \ref{thm1.1} follows from the local existence
result (Proposition \ref{prop3.1}) and \textit{a priori} estimate
(Proposition \ref{prop4.1}). Here, we give the outline of the proof.
\\

\noindent\textit{\underline{Proof of Theorem \ref{thm1.1}.}} If the
initial data satisfy $
\|(\varrho_{0},\textbf{v}_{0})\|_{B^{\sigma}_{2,1}}\leq\frac{\delta_{1}}{2}$,
by Proposition \ref{prop3.1}, then we determine a time
$T_{1}>0(T_{1}\leq T_{0})$ such that the local solutions of
(\ref{R-E8})-(\ref{R-E9}) exists in
$\widetilde{\mathcal{C}}_{T_{1}}(B^{\sigma}_{2,1})$ and
$\|(\varrho,\textbf{v})\|_{\widetilde{L}^\infty_{T_{1}}(B^{\sigma}_{2,1})}\leq\delta_{1}$.
Therefore from Proposition \ref{prop4.1} the solutions satisfy the
\textit{a priori} estimate
$\|(\varrho,\textbf{v})\|_{\widetilde{L}^\infty_{T_{1}}(B^{\sigma}_{2,1})}\leq
C_{1}\|(\varrho_{0},\textbf{v}_{0})\|_{B^{\sigma}_{2,1}}\leq\frac{\delta_{1}}{2}$
provided $
\|(\varrho_{0},\textbf{v}_{0})\|_{B^{\sigma}_{2,1}}\leq\frac{\delta_{1}}{2C_{1}}.$
Thus by Proposition \ref{prop3.1} the system
(\ref{R-E8})-(\ref{R-E9}) for $t\geq T_{1}$ with the initial data
$(\varrho,\textbf{v})(T_{1})$ has again a unique solution
$(\varrho,\textbf{v})$ satisfying
$\|(\varrho,\textbf{v})\|_{\widetilde{L}^\infty_{(T_{1},2T_{1})}(B^{\sigma}_{2,1})}\leq\delta_{1}$,
further
$\|(\varrho,\textbf{v})\|_{\widetilde{L}^\infty_{2T_{1}}(B^{\sigma}_{2,1})}\leq\delta_{1}$.
Then by Proposition \ref{prop4.1} we have
$\|(\varrho,\textbf{v})\|_{\widetilde{L}^\infty_{2T_{1}}(B^{\sigma}_{2,1})}\leq
C_{1}\|(\varrho_{0},\textbf{v}_{0})\|_{B^{\sigma}_{2,1}}\leq\frac{\delta_{1}}{2}$.
Thus we can continuous the same process for $0\leq t\leq nT_{1},
n=3,4,...$ and finally get a global solution
$(\varrho,\textbf{v})\in\widetilde{\mathcal{C}}(B^{\sigma}_{2,1})$
satisfying
\begin{eqnarray}&&\|(\varrho,\mathbf{v})\|_{\widetilde{L}^\infty(B^{\sigma}_{2,1})}
\nonumber\\&&+\lambda_{1}\Big\{\Big\|\frac{1}{\sqrt{\tau}}\mathbf{v}\Big\|_{\widetilde{L}^2(B^{\sigma}_{2,1})}
+\Big\|\sqrt{\tau}\nabla\varrho\Big\|_{\widetilde{L}^2(B^{\sigma-1}_{2,1})}\Big\}
\nonumber\\&\leq&
C_{1}\|(\varrho_{0},\mathbf{v}_{0})\|_{B^{\sigma}_{2,1}}\leq\frac{\delta_{1}}{2}.\label{R-E34}
\end{eqnarray}
The choice of $\delta_{1}$ is sufficient to ensure
$\frac{\gamma-1}{2}\varrho+\bar{\psi}>0$. Then it follows form
Remark \ref{rem3.1} that $(\rho,\textbf{v})\in
\mathcal{C}^{1}([0,\infty)\times \mathbb{R}^{d})$ is a classical
solution of (\ref{R-E1})-(\ref{R-E2}) with $\rho>0$.\ Furthermore,
we arrive at Theorem \ref{thm1.1} with
$\delta_{0}=\min(\delta_{1}/2,\delta_{1}/2C_{1})$. \hspace{100mm}
$\square$

\section{Relaxation limit}\setcounter{equation}{0}
In this section, we give the proof of Theorem \ref{thm1.2}.
\begin{proof}
From (\ref{R-E7}) and Remark \ref{rem2.2}, we deduce that quantities
$\sup_{s\geq0}\|\rho^{\tau}-\bar{\rho}\|_{B^{\sigma}_{2,1}}$ and
$$\frac{1}{\tau}\int^\infty_{0}\|\rho\textbf{v}(t)\|^2_{B^{\sigma}_{2,1}}dt=\frac{1}{\tau^2}\int^\infty_{0}\|\rho^\tau\textbf{v}^\tau(s)\|^2_{B^{\sigma}_{2,1}}ds$$
are bounded uniformly with respect to $\tau$. Therefore, the
left-hand side of (\ref{R-E4}) reads as $\tau^2\times$ the time
derivative of a quantity which is bounded in
$L^2(\mathbb{R}^{+}\times \mathbb{R}^{d})$, plus $\tau^2\times$ the
space derivative of a quantity which is bounded in
$L^1(\mathbb{R}^{+}\times \mathbb{R}^{d})$. So, this allows us to
pass to the limit $\tau\rightarrow0$ in the sense of distributions, and
we arrive at
$$-\frac{\rho^\tau\textbf{v}^\tau}{\tau}-\nabla P(\rho^\tau)\rightharpoonup 0\ \ \ \ \mbox{in}\ \ \ \mathcal{D}'(\mathbb{R}^{+}\times \mathbb{R}^{d}).$$
Inserting the weak convergence property into the first equation of
(\ref{R-E4}), we have
$$\partial_{s}\rho^\tau-\Delta P(\rho^\tau)\rightharpoonup 0\ \ \ \ \mbox{in}\ \ \ \mathcal{D}'(\mathbb{R}^{+}\times \mathbb{R}^{d})$$
as $\tau\rightarrow0$.

On the other hand, by (\ref{R-E4}), we conclude that
$\partial_{s}\rho^\tau$ is bounded in
$L^2(\mathbb{R}^{+},B^{\sigma-1}_{2,1})$. Hence, it follows from
Proposition \ref{prop2.3} and Aubin-Lions compactness lemma in
\cite{S} that there exists some function $\mathcal{N}\in
\mathcal{C}(\mathbb{R}^{+}, \bar{\rho}+B^{\sigma}_{2,1})$ such that
as $\tau\rightarrow0$, it holds that
$$
\{\rho^{\tau}\}\rightarrow \mathcal{N}\ \ \ \ \mbox{strongly in}\ \
\ \mathcal{C}([0,T],(B^{\sigma-\delta}_{2,1})(B_{r})),$$ for any
$T>0$ and $\delta\in(0,1)$, which implies that $\mathcal{N}$ is a
global weak solution to the porous medium equation (\ref{R-E6})
satisfying (\ref{R-E1000}). For more details, the reader is referred
to e.g. \cite{CG}.

Therefore, the proof of Theorem \ref{thm1.2} is complete.
\end{proof}

\section{Appendix}\setcounter{equation}{0}
As we known, Vishik, Bahouri, Chemin and Danchin \textit{et al.}
\cite{V,BCD,D} have obtained some estimates of commutator, however,
their results are unable to be applied to our case directly. Hence,
following from their arguments, we develop a new estimate of
commutator.
\begin{prop} \label{prop6.1}Let $s>0,\ 1\leq r\leq\infty$ and $p,p_{1},p_{2}\in[1,\infty]^3$ with
$1/p=1/p_{1}+1/p_{2}$. There exists a generic constant $C>0$
depending only on $p,p_{1},p_{2},\sigma, r, d$ such that
\begin{equation}
2^{qs}\|[f,\Delta_{q}]\mathcal{A}g\|_{L^{p}}\leq Cc_{q}\|\nabla
f\|_{L^{p_{1}}\cap B^{\sigma-1}_{p_{2},r}\cap
B^0_{p_{1},r}}(\|g\|_{B^{\sigma}_{p_{2},r}}+\|\nabla
g\|_{L^{p_{1}}}),\label{R-E35}
\end{equation}
where the operator $\mathcal{A}:=\mathrm{div}$ or $\nabla$. As a
direct consequence, when $1\leq p\leq p_{2}\leq p_{1}\leq\infty$, if
$$s>1+d\Big(\frac{1}{p_{2}}-\frac{1}{p_{1}}\Big)\ \ \ \mbox{or}\ \ \ s=1+d\Big(\frac{1}{p_{2}}-\frac{1}{p_{1}}\Big)\ \ \mbox{and}\ \ r=1,$$
then
\begin{equation}
2^{qs}\|[f,\Delta_{q}]\mathcal{A}g\|_{L^{p}}\leq Cc_{q}\|\nabla
f\|_{B^{s-1}_{p_{2},r}}\|g\|_{B^{s}_{p_{2},r}}, \label{R-E36}
\end{equation}
where $\{c_{q}\}$ denotes a sequence such that $\|(c_{q})\|_{
{l^{r}}}\leq 1.$
\end{prop}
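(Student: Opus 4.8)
The plan is to prove the commutator estimate \eqref{R-E35} via Bony's decomposition and then deduce \eqref{R-E36} as a special case by Besov embeddings. Write $\mathcal{A}g$ generically; since $\mathcal{A}$ is a first-order constant-coefficient operator, $[f,\Delta_q]\mathcal{A}g = f\,\Delta_q\mathcal{A}g - \Delta_q(f\,\mathcal{A}g)$, and we decompose the product $f\,\mathcal{A}g$ by Bony's formula $f\,\mathcal{A}g = T_f(\mathcal{A}g) + T_{\mathcal{A}g}f + R(f,\mathcal{A}g)$. Correspondingly split the commutator into three pieces. The paraproduct term $[f,\Delta_q]$ acting on the low-high part gives the classical commutator $\sum_{|q'-q|\le 4}[S_{q'-1}f,\Delta_q]\Delta_{q'}\mathcal{A}g$, which is handled by a first-order Taylor expansion: writing $\Delta_q h(x) = 2^{qd}\!\int \check\varphi(2^q y)\,h(x-y)\,dy$, one gets $[S_{q'-1}f,\Delta_q]\Delta_{q'}\mathcal{A}g(x) = 2^{qd}\!\int \check\varphi(2^q y)\big(S_{q'-1}f(x)-S_{q'-1}f(x-y)\big)\Delta_{q'}\mathcal{A}g(x-y)\,dy$, and the mean-value bound $|S_{q'-1}f(x)-S_{q'-1}f(x-y)|\le |y|\,\|\nabla S_{q'-1}f\|_{L^\infty}$ together with $\int |y|\,|\check\varphi(2^q y)|\,2^{qd}dy \lesssim 2^{-q}$ produces the factor $2^{-q}\|\nabla f\|_{L^{p_1}}$ times $\|\Delta_{q'}\mathcal{A}g\|_{L^{p_2}}\approx 2^{q'}\|\Delta_{q'}g\|_{L^{p_2}}$. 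After multiplying by $2^{qs}$ and using $|q'-q|\le 4$, this contributes $Cc_q\|\nabla f\|_{L^{p_1}}\|g\|_{B^\sigma_{p_2,r}}$ with $\|(c_q)\|_{\ell^r}\le 1$ (the $2^q\cdot 2^{-q}$ cancel and the $\sigma$-shift lands on $g$).

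For the remaining two pieces the commutator structure is less essential and one estimates each of $f\,\Delta_q\mathcal{A}g$ and $\Delta_q(f\,\mathcal{A}g)$ separately. In the term $T_{\mathcal{A}g}f$ (high-low), $\Delta_q$ localizes to frequencies $\sim 2^q$, so $\Delta_q T_{\mathcal{A}g}f = \sum_{|q'-q|\le 4}\Delta_q(S_{q'-1}(\mathcal{A}g)\,\Delta_{q'}f)$; bounding $\|S_{q'-1}\mathcal{A}g\|_{L^\infty}$ is where one must use the hypothesis $\|\nabla g\|_{L^{p_1}}$ (via Bernstein, summing the low-frequency blocks of $\nabla g$) and pair it with $\|\Delta_{q'}f\|_{L^{p_2}}\approx 2^{-q'}\|\Delta_{q'}\nabla f\|_{L^{p_2}}$, giving $Cc_q\,2^{-q(\sigma-1)}\cdot 2^q\cdot\|\nabla g\|_{L^{p_1}}\|\nabla f\|_{B^{\sigma-1}_{p_2,r}}$ after the $2^{qs}$ weight — here the norm $\|\nabla f\|_{B^{\sigma-1}_{p_2,r}}$ appears, matching the stated right-hand side. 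The remainder term $R(f,\mathcal{A}g) = \sum_{q'}\Delta_{q'}f\,\widetilde\Delta_{q'}\mathcal{A}g$ is the delicate one because it is a sum over $q'\gtrsim q$ (frequencies of the product can be small while each factor is large); one estimates $\|\Delta_q R(f,\mathcal{A}g)\|_{L^p}\le \sum_{q'\gtrsim q}\|\Delta_q(\Delta_{q'}f\,\widetilde\Delta_{q'}\mathcal{A}g)\|_{L^p}$, uses Bernstein to gain $2^{qd/p}$ from the outer $\Delta_q$ acting on an $L^{p/?}$ object, and bounds $\|\Delta_{q'}f\|_{L^{p_1}}\le 2^{-q'}c_{q'}\|\nabla f\|_{B^0_{p_1,r}}$ against $\|\widetilde\Delta_{q'}\mathcal{A}g\|_{L^{p_2}}\approx 2^{q'}\|\widetilde\Delta_{q'}g\|_{L^{p_2}}$; the two powers of $2^{q'}$ cancel, the tail sum $\sum_{q'\ge q-N}$ converges because $g\in B^\sigma_{p_2,r}$ with $\sigma>0$, and Young's convolution inequality for series delivers an $\ell^r$ sequence. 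This is why the hypothesis requires $\|\nabla f\|\in B^0_{p_1,r}$ and $s=\sigma>0$: positivity of the regularity index is exactly what makes the remainder sum summable.

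Assembling the three contributions gives \eqref{R-E35}. For \eqref{R-E36}, assume $1\le p\le p_2\le p_1\le\infty$ and invoke the Besov embeddings of Lemma \ref{lem2.2}: under $s>1+d(1/p_2-1/p_1)$ (or equality with $r=1$) one has $B^{s-1}_{p_2,r}\hookrightarrow L^{p_1}$ and $B^{s-1}_{p_2,r}\hookrightarrow B^{0}_{p_1,r}$ and $B^{s-1}_{p_2,r}\hookrightarrow B^{\sigma-1}_{p_2,r}$, hence $\|\nabla f\|_{L^{p_1}\cap B^{\sigma-1}_{p_2,r}\cap B^0_{p_1,r}}\lesssim \|\nabla f\|_{B^{s-1}_{p_2,r}}$; similarly $\|\nabla g\|_{L^{p_1}}\lesssim \|g\|_{B^s_{p_2,r}}$ and $\|g\|_{B^\sigma_{p_2,r}}\lesssim\|g\|_{B^s_{p_2,r}}$. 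Substituting into \eqref{R-E35} and relabeling $\sigma=s$ yields \eqref{R-E36} with $\|(c_q)\|_{\ell^r}\le 1$. The main obstacle is the remainder term $R(f,\mathcal{A}g)$: getting the Bernstein gain from the outer block to compensate the derivative loss, keeping the bookkeeping of the two sequences $(c_{q'})$ clean so that after Young's inequality one still has a single $\ell^r$ sequence, and making sure the borderline case $s=1+d(1/p_2-1/p_1)$ with $r=1$ works (where one must be careful that all the embeddings and the tail summation are still valid). Everything else is a routine, if somewhat lengthy, paraproduct computation.
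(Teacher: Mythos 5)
Your overall route — Bony decomposition, Taylor expansion for the para-commutator, remainder continuity for the hard piece, then Besov embeddings to pass from \eqref{R-E35} to \eqref{R-E36} — matches the paper's, and the \eqref{R-E35} $\Rightarrow$ \eqref{R-E36} step is essentially identical. But there is a genuine gap in the proof of \eqref{R-E35}: you apply Bony's decomposition to $f\,\mathcal{A}g$ directly, whereas the paper first splits $f=\Delta_{-1}f+\tilde f$ and applies Bony only to the high-frequency part $\tilde f$, treating $[\Delta_{-1}f,\Delta_q]\mathcal{A}g$ separately by the same Taylor argument as the para-commutator. This split is not cosmetic. Your remainder estimate uses the bound $\|\Delta_{q'}f\|_{L^{p_1}}\lesssim 2^{-q'}\|\Delta_{q'}\nabla f\|_{L^{p_1}}$ and, implicitly, $\|f\|_{B^{1}_{p_1,r}}\lesssim\|\nabla f\|_{B^{0}_{p_1,r}}$. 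Both are false at $q'=-1$: the block $\Delta_{-1}$ has Fourier support containing the origin, so the reverse Bernstein inequality does not apply, and the constant part of $f$ is invisible to $\nabla f$. Since $\Delta_q R(f,\mathcal{A}g)=\sum_{q'\geq q-N_0}\Delta_q(\Delta_{q'}f\,\widetilde\Delta_{q'}\mathcal{A}g)$ and $R(f,\Delta_q\mathcal{A}g)=\sum_{|q'-q|\le 1}\Delta_{q'}f\,\widetilde\Delta_{q'}\Delta_q\mathcal{A}g$ both include the $q'=-1$ term for small $q$, your separate estimates of these two pieces cannot be closed in terms of $\|\nabla f\|$. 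A simple test: for $f$ constant, $[f,\Delta_q]\mathcal{A}g\equiv 0$ and $\nabla f\equiv 0$, yet $\Delta_q R(f,\mathcal{A}g)$ and $R(f,\Delta_q\mathcal{A}g)$ are individually nonzero; your decomposition loses exactly the cancellation that makes the commutator vanish, while the paper's split $f=\Delta_{-1}f+\tilde f$ captures it automatically (for constant $f$, $\tilde f=0$ and $[\Delta_{-1}f,\Delta_q]=0$). To fix your argument, insert the low/high split at the very start, use the spectral-support property of $\tilde f$ to justify $\|\Delta_{q'}\nabla\tilde f\|_{L^p}\approx 2^{q'}\|\Delta_{q'}\tilde f\|_{L^p}$ for all $q'\geq -1$ (this is precisely the paper's display \eqref{R-E38}), invoke Proposition \ref{prop2.2} for the global remainder term $\Delta_q R(\tilde f,g^j)$ (your sketch of a ``Bernstein gain from the outer $\Delta_q$'' is too vague to produce the needed $\ell^r$ sequence — the remainder continuity lemma is the clean tool, and it requires $\sigma+1>0$), and add the extra term $F^6_q=[\Delta_{-1}f,\Delta_q]\mathcal{A}g$ estimated via Taylor, which contributes $\|\nabla\Delta_{-1}f\|_{L^{p_1}}\lesssim\|\nabla f\|_{L^{p_1}}$.
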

\begin{proof}
To show that the gradient part of $f$ is involved in the estimate,
we need to split $f$ into low and high frequencies:
$f=\Delta_{-1}f+\tilde{f}$. Obviously, there exists a constant $C>0$
such that
\begin{eqnarray}
\|\Delta_{-1}\nabla f\|_{L^{p}}\leq C \|\nabla f\|_{L^{p}},\ \,\
\|\nabla\tilde{f}\|_{L^{p}}\leq C \|\nabla f\|_{L^{p}}, \ \
p\in[1,\infty].\label{R-E37}
\end{eqnarray}
Since $\tilde{\varrho}$ is spectrally supported away from the
origin, that is, there exists a radius $0<R<\frac{3}{4}$ such that
$\mathrm{Supp}\ \mathcal{F}\tilde{f}\bigcap B(0,R)=\emptyset$, Lemma
\ref{lem2.1} implies
\begin{eqnarray}\|\Delta_{q}\nabla\tilde{f}\|_{L^{p}}\approx
2^{q}\|\Delta_{q}\tilde{f}\|_{L^{p}}, \ \ \ p\in[1,\infty],\ \ \
q\geq-1.\label{R-E38}\end{eqnarray}

Without loss of generality, we proceed the proof with
$\mathcal{A}g=\mathrm{div}g$. Taking advantage of Bony's
decomposition, we have
\begin{eqnarray*}
[f,\Delta_{q}]\mbox{div}g&=&[\tilde{f},\Delta_{q}]\mbox{div}g+[\Delta_{-1}f,\Delta_{q}]\mbox{div}g\\
&=&\tilde{f}\Delta_{q}\mbox{div}g-\Delta_{q}(\tilde{f}\mbox{div}g)+[\Delta_{-1}f,\Delta_{q}]\mbox{div}g
\\&=&T_{\tilde{f}}\Delta_{q}\mbox{div}g+T_{\Delta_{q}\mbox{div}g}\tilde{f}+R(\tilde{f},\Delta_{q}\mbox{div}g)\\
&&-\Delta_{q}(T_{\tilde{f}}\mbox{div}g+T_{\mbox{div}g}\tilde{f}+R(\tilde{f},\mbox{div}g))+[\Delta_{-1}f,\Delta_{q}]\mbox{div}g.
\end{eqnarray*}
Set $[f,\Delta_{q}]\mbox{div}g\equiv\sum^6_{i=1}F^{i}_{q} $, where
\begin{eqnarray*}F^1_{q}&=&T_{\tilde{f}}\Delta_{q}\partial_{j}g^{j}-\Delta_{q}T_{\tilde{f}}\partial_{j}g^{j},\ \ \ \ (\mbox{div}g:=\partial_{j}g^{j})\\
F^2_{q}&=&T_{\Delta_{q}\partial_{j}g^{j}}\tilde{f},\\
F^3_{q}&=&-\Delta_{q}T_{\partial_{j}g^{j}}\tilde{f},\\
F^4_{q}&=&\partial_{j}R(\tilde{f},\Delta_{q}g^{j})-\partial_{j}\Delta_{q}R(\tilde{f},g^{j}),\\
F^5_{q}&=&\Delta_{q}R(\partial_{j}\tilde{f},g^{j})-R(\partial_{j}\tilde{f},\Delta_{q}g^{j})\\
F^6_{q}&=&[\Delta_{-1}f,\Delta_{q}]\mbox{div}g.
\end{eqnarray*}
By Proposition \ref{prop2.1}, we have
\begin{eqnarray*}
F^1_{q}&=&\sum_{q'}S_{q'-1}\tilde{f}\Delta_{q'}\Delta_{q}\partial_{j}g^{j}-\Delta_{q}\sum_{q'}S_{q'-1}\tilde{f}\Delta_{q'}\partial_{j}g^{j}\\
&=&\sum_{|q-q'|\leq4}[S_{q'-1}\tilde{f},\Delta_{q}]\partial_{j}\Delta_{q'}g^{j}\\
&=&\sum_{|q-q'|\leq4}\int_{\mathbb{R}^{d}}
h(y)[S_{q'-1}\tilde{f}(x)-S_{q'-1}\tilde{f}(x-2^{-q}y)]\partial_{j}\Delta_{q'}g^{j}(x-2^{-q}y)dy.
\end{eqnarray*}
Then, applying first order Taylor's formula, Young's inequality,
Lemma \ref{lem2.1} and (\ref{R-E37}), we get
\begin{eqnarray*}
2^{q\sigma}\|F^1_{q}\|_{L^{p}}&\leq&C\sum_{|q-q'|\leq4}\|\nabla \tilde{f}\|_{L^{p_{1}}}2^{(\sigma-1)(q-q')}2^{q'\sigma}\|\Delta_{q'}g^{j}\|_{L^{p_{2}}}\\
&\leq&\ Cc_{q1}\|\nabla
f\|_{L^{p_{1}}}\|g\|_{B^{\sigma}_{p_{2},r}},\ \ \ \
c_{q1}:=\sum_{|q-q'|\leq4}\frac{2^{q'\sigma}\|\Delta_{q'}g\|_{L^{p_{2}}}}{9\|g\|_{B^{\sigma}_{p_{2},r}}}.
\end{eqnarray*}
and
\begin{eqnarray*}
2^{q\sigma}\|F^2_{q}\|_{L^{p}}&=& 2^{q\sigma}\Big\|\sum_{q'\geq q-3}S_{q'-1}\partial_{j}\Delta_{q}g^{j}\Delta_{q'}\tilde{f}\Big\|_{L^{p}}\\
&\leq& 2^{q\sigma}\sum_{q'\geq q-3}\|\Delta_{q'}\tilde{f}\|_{L^{p_{1}}}\|S_{q'-1}\partial_{j}\Delta_{q}g^{j}\|_{L^{p_{2}}}\\
&\leq&C\sum_{q'\geq q-3}2^{q-q'}\|\nabla f\|_{L^{p_{1}}}2^{q\sigma}\|\Delta_{q}g\|_{L^{p_{2}}}\\
&\leq& Cc_{q2}\|\nabla f\|_{L^{p_{1}}}\|g\|_{B^{\sigma}_{p_{2},r}},\
\ \
c_{q2}:=\frac{2^{q\sigma}\|\Delta_{q}g\|_{L^{p_{2}}}}{\|g\|_{B^{\sigma}_{p_{2},r}}}.
\end{eqnarray*}
The third part $F^3_{q}$ is proceeded as follows:
\begin{eqnarray*}
F^3_{q}&=&-\Delta_{q}T_{\partial_{j}g^{j}}\tilde{f}\\
&=&-\sum_{|q-q'|\leq4}\Delta_{q}(S_{q'-1}\partial_{j}g^{j}\Delta_{q'}\tilde{f}),
\end{eqnarray*}
then
\begin{eqnarray*}
2^{q\sigma}\|F^3_{q}\|_{L^{p}}&\leq& C\sum_{|q-q'|\leq4}2^{(q-q')\sigma}2^{q'\sigma}\|S_{q'-1}\partial_{j}g^{j}\Delta_{q'}\tilde{f}\|_{L^p}\\
&\leq& C\sum_{|q-q'|\leq4}2^{(q-q')\sigma}\|S_{q'-1}\partial_{j}g^{j}\|_{L^{p_{1}}}2^{q'(\sigma-1)}\|\Delta_{q'}\nabla\tilde{f}\|_{L^{p_2}}\\
&\leq& Cc_{q3}\|\nabla f\|_{B^{\sigma-1}_{p_{2},r}}\|\nabla
g\|_{L^{p_{1}}},\ \ \
c_{q3}:=\sum_{|q-q'|\leq4}\frac{2^{q'(\sigma-1)}\|\Delta_{q'}\nabla
f\|_{L^{p_{2}}}}{9\|\nabla f\|_{B^{\sigma-1}_{{p_{2}},r}}}.
\end{eqnarray*}
By the definition \ref{defn2.2} and Proposition \ref{prop2.1}, we
have
\begin{eqnarray*}F^4_{q}&=&\partial_{j}R(\tilde{f},\Delta_{q}g^{j})-\partial_{j}\Delta_{q}R(\tilde{f},g^{j})\\
&=&\sum_{|q-q'|\leq1}\partial_{j}(\Delta_{q'}\tilde{f}\tilde{\Delta}_{q'}\Delta_{q}g^{j})-\partial_{j}\Delta_{q}R(\tilde{f},g^{j})\\
&=&F^{4,1}_{q}+F^{4,2}_{q}.
\end{eqnarray*}
For the first term, using (\ref{R-E38}) and Lemma \ref{lem2.1}, we
obtain
\begin{eqnarray*}
2^{q\sigma}\|F^{4,1}_{q}\|_{L^{p}}&\leq&
2^{q\sigma}\sum_{|q-q'|\leq1}\|\Delta_{q'}\nabla\tilde{f}\|_{L^{p_1}}\|\tilde{\Delta}_{q'}g^{j}\|_{L^{p_{2}}}
+2^{q\sigma}\sum_{|q-q'|\leq1}2^{q-q'}\|\Delta_{q'}\nabla\tilde{f}\|_{L^{p_{1}}}\|\tilde{\Delta}_{q'}g^{j}\|_{L^{p_{2}}}
\nonumber\\&\leq& C\|\nabla
f\|_{L^{p_{1}}}\sum_{|q-q'|\leq1}2^{(q-q')\sigma}2^{q'\sigma}\|\tilde{\Delta}_{q'}g^{j}\|_{L^{p_{2}}}\nonumber\\&&
+C\|\nabla
f\|_{L^{p_{1}}}\sum_{|q-q'|\leq1}2^{(q-q')(\sigma+1)}2^{q'\sigma}\|\tilde{\Delta}_{q'}g^{j}\|_{L^{p_{2}}}\\
&\leq& Cc_{q4(1)}\|\nabla
f\|_{L^{p_{1}}}\|g\|_{B^{\sigma}_{p_{2},r}},\ \ \
c_{4(1)}:=\sum_{|q-q'|\leq1}\frac{2^{q'\sigma}\|\Delta_{q'}g\|_{L^{p_{2}}}}{4\|g\|_{B^{\sigma}_{p_{2},r}}}.
\end{eqnarray*}
The second term is estimated as:
\begin{eqnarray*}
2^{q\sigma}\|F^{4,2}_{q}\|_{L^{p}}&=&2^{q\sigma}\|\partial_{j}\Delta_{q}R(\tilde{f},g^{j})\|_{L^{p}}\\
&\leq&C2^{q(\sigma+1)}\|\Delta_{q}R(\tilde{f},g^{j})\|_{L^{p}}\\
&\leq& Cc_{q4(2)}\|R(\tilde{f},g^{j})\|_{B^{\sigma+1}_{p,r}}\\
&\leq&
Cc_{q4(2)}\|\tilde{f}\|_{B^{1}_{p_{1},r_{1}}}\|g\|_{B^{\sigma}_{p_{2},r_{2}}}\Big(\frac{1}{r}=\frac{1}{r_{1}}+\frac{1}{r_{2}}\Big)\\
&\leq&
Cc_{q4(2)}\|\tilde{f}\|_{B^{1}_{p_{1},r}}\|g\|_{B^{\sigma}_{p_{2},r}}\\
&\leq&
Cc_{q4(2)}\|\nabla\tilde{f}\|_{B^{0}_{p_{1},r}}\|g\|_{B^{\sigma}_{p_{2},r}}\\
&\leq& Cc_{q4(2)}\|\nabla
f\|_{B^{0}_{p_{1},r}}\|g\|_{B^{\sigma}_{p_{2},r}},\ \  \
c_{q4(2)}:=\frac{2^{q(\sigma+1)}\|\Delta_{q}R(\tilde{f},g^{j})\|_{L^{p}}}{4\|R(\tilde{f},g^{j})\|_{B^{\sigma+1}_{p,r}}},
\end{eqnarray*}
where we have used Lemma \ref{lem2.2} and the result of continuity
for the remainder (Proposition \ref{prop2.2}). Among them, $s+1>0$
is required.

For $F^5_{q}$, it follows from the same argument as  $F^4_{q}$ that
$$2^{q\sigma}\|F^{5}_{q}\|_{L^{p}}\leq
Cc_{q5}\|\nabla f\|_{L^{p_{1}}\cap
B^{0}_{p_{1},r}}\|g\|_{B^{\sigma}_{p_{2},r}},$$where$$
c_{q5}:=\Big(\sum_{|q-q'|\leq1}\frac{2^{q'\sigma}\|\Delta_{q'}g\|_{L^{p_{2}}}}{4\|g\|_{B^{\sigma}_{p_{2},r}}}\Big)
+\frac{2^{q\sigma}\|\Delta_{q}R(\partial_{j}\tilde{f},g^{j})\|_{L^{p}}}{4\|R(\partial_{j}\tilde{f},g^{j})\|_{B^{\sigma}_{p,r}}},
$$
and $s>0$ is required.

For
$F^6_{q}=\sum_{|q-q'|\leq1}[\Delta_{q}(\Delta_{-1}f\partial_{j}\Delta_{q'}g^{j})-\Delta_{-1}f\Delta_{q}\Delta_{q'}\partial_{j}g^{j}]
 \ (g^{j}=\sum_{q'}\Delta_{q'}g^{j}),$ by applying first
order Taylor's formula, Young's inequality, Lemma \ref{lem2.1} and
(\ref{R-E37}), we have
\begin{eqnarray*}
2^{q\sigma}\|F^6_{q}\|_{L^{p}}&=&\Big\|\sum_{|q-q'|\leq1}\int_{\mathbb{R}^{d}}
h(y)\Big[\Delta_{-1}f(x)+\Delta_{-1}f(x-2^{-q}y)\Big]\Delta_{q'}\partial_{j}g^{j}(x-2^{-q}y)dy\Big\|_{L^{p}}\nonumber\\
&\leq& C\sum_{|q-q'|\leq1}2^{(q-q')(\sigma-1)}\|\nabla
\Delta_{-1}f\|_{L^{p_{1}}}2^{q'\sigma}\|\Delta_{q'}g\|_{L^{p_{2}}}\\
&\leq& Cc_{q6}\|\nabla f\|_{L^{p_{1}}}\|g\|_{B^{\sigma}_{p_{2},r}},\
\ \ \
c_{q6}:=\sum_{|q-q'|\leq1}\frac{2^{q'\sigma}\|\Delta_{q'}g\|_{L^{p_{2}}}}{3\|g\|_{B^{\sigma}_{p_{2},r}}}.
\end{eqnarray*}
Adding above these inequalities together, the inequality
(\ref{R-E35}) is followed with $c_{q}=\frac{1}{6}\sum_{i=1}^6c_{qi}$
satisfying $\|(c_{q})\|_{\ell^r}\leq1$.

Furthermore, if
$$s>1+d\Big(\frac{1}{p_{2}}-\frac{1}{p_{1}}\Big)\ \ \ \mbox{or}\ \ \ s=1+d\Big(\frac{1}{p_{2}}-\frac{1}{p_{1}}\Big)\ \ \mbox{and}\ \ r=1$$
with $1\leq p\leq p_{2}\leq p_{1}\leq\infty$, we have the following
embedding properties:
$$B^{s-1}_{p_{2},r}\hookrightarrow L^{p_{1}},
\ \ \ B^{s-1}_{p_{2},r}\hookrightarrow
B^{s-1-d(\frac{1}{p_{2}}-\frac{1}{p_{1}})}_{p_{1},r}\hookrightarrow
B^{0}_{p_{1},r},$$ the inequality (\ref{R-E36}) follows immediately.

Therefore, the proof of Proposition \ref{prop6.1} is complete.
\end{proof}

Having Proposition \ref{prop6.1}, we may deal with some estimates of
commutator of special form in the proof of \textit{a priori}
estimate, which are not covered by Lemma \ref{lem2.3}. For clarity,
we give them by a corollary.
\begin{cor} \label{cor6.1} Let $\sigma=1+d/2$.
There exists a generic constant $C>0$ depending only on $\sigma, d$
such that
\begin{equation}
\left\{
\begin{array}{l}2^{q\sigma}\|[\varrho,\Delta_{q}]\mathrm{div}\mathbf{v}\|_{L^{2d/d+2}}\leq
Cc_{q}\|\nabla\varrho\|_{B^{\sigma-1}_{2,1}}\|\mathbf{v}\|_{B^{\sigma}_{2,1}};\\
 2^{q\sigma}\|[\mathbf{v},\Delta_{q}]\cdot\nabla\varrho\|_{L^{2d/d+2}}\leq
Cc_{q}\|\nabla\mathbf{v}\|_{B^{\sigma-1}_{2,1}}\|\varrho\|_{B^{\sigma}_{2,1}};\\
 \end{array} \right.\label{R-E39}
\end{equation}
where $\{c_{q}\}$ denotes a sequence such that $\|(c_{q})\|_{
{l^{1}}}\leq 1.$
\end{cor}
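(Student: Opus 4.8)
The plan is to obtain both inequalities of (\ref{R-E39}) as direct specializations of the second commutator estimate (\ref{R-E36}) in Proposition \ref{prop6.1}, so that essentially no new computation is needed. First I would fix the free exponents in Proposition \ref{prop6.1} by taking $p_{1}=d$, $p_{2}=2$ and $r=1$; then $\frac{1}{p}=\frac{1}{p_{1}}+\frac{1}{p_{2}}=\frac{1}{d}+\frac{1}{2}=\frac{d+2}{2d}$, i.e. $p=\frac{2d}{d+2}$, which is exactly the Lebesgue exponent appearing on the left-hand side of (\ref{R-E39}). It then remains to check that the hypotheses of (\ref{R-E36}) hold with $s=\sigma=1+\frac{d}{2}$: for every $d\geq 2$ one has $1\leq\frac{2d}{d+2}\leq 2\leq d$, so the ordering $1\leq p\leq p_{2}\leq p_{1}\leq\infty$ is satisfied, and $1+d\big(\frac{1}{p_{2}}-\frac{1}{p_{1}}\big)=1+d\big(\frac{1}{2}-\frac{1}{d}\big)=\frac{d}{2}<1+\frac{d}{2}=\sigma$, so we sit in the strict-inequality case and the endpoint restriction $r=1$ is not even invoked. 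Hence (\ref{R-E36}) yields, for arbitrary scalar functions $f$ and $g$,
$$2^{q\sigma}\big\|[f,\Delta_{q}]\mathcal{A}g\big\|_{L^{2d/(d+2)}}\leq Cc_{q}\|\nabla f\|_{B^{\sigma-1}_{2,1}}\|g\|_{B^{\sigma}_{2,1}},\qquad \mathcal{A}\in\{\mathrm{div},\nabla\},$$
with $\|(c_{q})\|_{\ell^{1}}\leq 1$.

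The first line of (\ref{R-E39}) is then the displayed estimate applied with $f=\varrho$, $g=\mathbf{v}$ and $\mathcal{A}=\mathrm{div}$. For the second line I would split the vector commutator into scalar ones,
$$[\mathbf{v},\Delta_{q}]\cdot\nabla\varrho=\sum_{j=1}^{d}[v^{j},\Delta_{q}]\partial_{j}\varrho,$$
apply the displayed estimate to each summand with $f=v^{j}$, $g=\varrho$ and $\mathcal{A}=\nabla$ (noting that $[v^{j},\Delta_{q}]\partial_{j}\varrho$ is a single component of the vector $[v^{j},\Delta_{q}]\nabla\varrho$), and sum over $j$. Since $\sum_{j=1}^{d}\|\nabla v^{j}\|_{B^{\sigma-1}_{2,1}}\leq C\|\nabla\mathbf{v}\|_{B^{\sigma-1}_{2,1}}$ and a finite sum of sequences normalized in $\ell^{1}$ is again normalized in $\ell^{1}$ up to a constant (here $d$ is fixed, so this constant is harmless and can be absorbed into $C$), I arrive at
$$2^{q\sigma}\big\|[\mathbf{v},\Delta_{q}]\cdot\nabla\varrho\big\|_{L^{2d/(d+2)}}\leq Cc_{q}\|\nabla\mathbf{v}\|_{B^{\sigma-1}_{2,1}}\|\varrho\|_{B^{\sigma}_{2,1}},$$
which is precisely the asserted estimate.

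The whole argument is bookkeeping around Proposition \ref{prop6.1}; the only points requiring a moment's care are the elementary inequalities $\frac{2d}{d+2}\leq 2\leq d$ (needed so that the exponents can be ordered as $p\leq p_{2}\leq p_{1}$) and the identity $1+d\big(\frac{1}{p_{2}}-\frac{1}{p_{1}}\big)=\sigma-1$ (which keeps us comfortably in the strict regime of (\ref{R-E36})). I do not anticipate any real obstacle, since all the substantive analysis has already been carried out in the proof of Proposition \ref{prop6.1}.
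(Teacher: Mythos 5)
Your proof is correct and follows essentially the same route as the paper: the paper's proof of Corollary \ref{cor6.1} simply instructs the reader to take $p=\frac{2d}{d+2}$, $p_{2}=2$, $r=1$ (which forces $p_{1}=d$) in Proposition \ref{prop6.1} and invoke (\ref{R-E36}). You have the same specialization, and your extra bookkeeping — verifying $1\leq p\leq p_{2}\leq p_{1}$, computing $1+d(\tfrac{1}{p_2}-\tfrac{1}{p_1})=\tfrac{d}{2}<\sigma$, and splitting $[\mathbf{v},\Delta_q]\cdot\nabla\varrho$ componentwise — is a more careful spelling-out of what the paper leaves to the phrase ``the conclusions follow obviously.''
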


\begin{proof} In Proposition \ref{prop6.1}, it suffices to take
$$p=\frac{2d}{d+2}(d\geq2),\ p_{2}=2,\  \sigma=1+\frac{d}{2}\ \  \mbox{and}\ \ r=1,$$
the conclusions follow obviously.
\end{proof}

According to H\"{o}lder inequality and Remark \ref{rem2.2}, it is
not difficult to achieve the estimates of commutators in
$L^{r}_{T}(L^{2d/d+2})$ spaces.

\begin{cor}\label{cor6.2}
Let $\sigma=1+d/2$ and $1\leq \theta\leq\infty$. Then there exists a
generic constant $C>0$ depending only on $\sigma, d$ such that
\begin{equation}
\left\{
\begin{array}{l}2^{q\sigma}\|[\varrho,\Delta_{q}]\mathrm{div}\mathbf{v}\|_{L^{\theta}_{T}(L^{2d/d+2})}\leq
Cc_{q}\|\nabla\varrho\|_{\widetilde{L}^{\theta_{1}}_{T}(B^{\sigma-1}_{2,1})}\|\mathbf{v}\|_{\widetilde{L}^{\theta_{2}}_{T}(B^{\sigma}_{2,1})};\\
 2^{q\sigma}\|[\mathbf{v},\Delta_{q}]\cdot\nabla\varrho\|_{L^{\theta}_{T}(L^{2d/d+2})}\leq
Cc_{q}\|\nabla\mathbf{v}\|_{\widetilde{L}^{\theta_{1}}_{T}(B^{\sigma-1}_{2,1})}\|\varrho\|_{\widetilde{L}^{\theta_{2}}_{T}(B^{\sigma}_{2,1})};\\
 \end{array} \right.\label{R-E40}
\end{equation}
where $\{c_{q}\}$ denotes a sequence such that $\|(c_{q})\|_{
{l^{1}}}\leq 1$ and
$\frac{1}{\theta}=\frac{1}{\theta_{1}}+\frac{1}{\theta_{2}}$.
\end{cor}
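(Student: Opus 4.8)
The plan is to deduce Corollary \ref{cor6.2} directly from Corollary \ref{cor6.1} by combining the pointwise-in-$q$ frequency-localized bounds with H\"older's inequality in the time variable, exactly as the paragraph preceding the statement announces. First I would fix $q\geq-1$ and recall from \eqref{R-E39} that for almost every $t\in[0,T]$ we have the bound
\begin{equation*}
2^{q\sigma}\|[\varrho,\Delta_{q}]\mathrm{div}\mathbf{v}(t)\|_{L^{2d/(d+2)}}\leq Cc_{q}(t)\|\nabla\varrho(t)\|_{B^{\sigma-1}_{2,1}}\|\mathbf{v}(t)\|_{B^{\sigma}_{2,1}},
\end{equation*}
where for each $t$ the sequence $(c_{q}(t))_{q}$ has $\ell^1$-norm at most $1$; concretely one reads off from the proof of Proposition \ref{prop6.1} that $c_q(t)$ is built from the normalized blocks $2^{q'\sigma}\|\Delta_{q'}\mathbf v(t)\|_{L^2}/\|\mathbf v(t)\|_{B^\sigma_{2,1}}$ and $2^{q'(\sigma-1)}\|\Delta_{q'}\nabla\varrho(t)\|_{L^2}/\|\nabla\varrho(t)\|_{B^{\sigma-1}_{2,1}}$ summed over $|q-q'|\le 4$, plus a remainder term. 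I would then take the $L^\theta$ norm in $t$ over $[0,T]$ of both sides and apply H\"older's inequality with exponents $\theta_1,\theta_2$ satisfying $1/\theta=1/\theta_1+1/\theta_2$ to split
\begin{equation*}
\big\|\,\|\nabla\varrho(t)\|_{B^{\sigma-1}_{2,1}}\|\mathbf{v}(t)\|_{B^{\sigma}_{2,1}}\,\big\|_{L^\theta_T}\leq \|\nabla\varrho\|_{L^{\theta_1}_T(B^{\sigma-1}_{2,1})}\|\mathbf{v}\|_{L^{\theta_2}_T(B^{\sigma}_{2,1})}.
\end{equation*}

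The only genuine point requiring care is that the right-hand side should involve the Chemin--Lerner norms $\widetilde{L}^{\theta_i}_T(B^{s}_{p,r})$ rather than the ordinary $L^{\theta_i}_T(B^{s}_{p,r})$ norms, and that the output sequence $(c_q)$ (now no longer $t$-dependent) still lies in the unit ball of $\ell^1$. Here is where Remark \ref{rem2.2} enters: since the inner summability index is $r=1$, and $1\le 1\le\theta_i$ fails in general, one must instead argue at the level of each dyadic block. The cleaner route is to avoid taking the time norm of the product bound and instead go back one step: from the proof of Proposition \ref{prop6.1}, the estimate for $2^{q\sigma}\|F^i_q(t)\|_{L^p}$ is, for each piece, of the schematic form $\sum_{|q-q'|\le 4} a_{q-q'}\, 2^{q'(\sigma-1)}\|\Delta_{q'}\nabla\varrho(t)\|_{L^2}\,\cdot\,(\text{an }L^\infty\text{-type factor of }\mathbf v)$ or the symmetric version; taking $\|\cdot\|_{L^\theta_T}$, applying H\"older in $t$ to each summand, and then using the triangle inequality together with $\|a_{q-q'}\|_{\ell^1_{q'}}<\infty$ produces
\begin{equation*}
2^{q\sigma}\|[\varrho,\Delta_{q}]\mathrm{div}\mathbf{v}\|_{L^{\theta}_{T}(L^{2d/(d+2)})}\leq Cc_{q}\|\nabla\varrho\|_{\widetilde{L}^{\theta_{1}}_{T}(B^{\sigma-1}_{2,1})}\|\mathbf{v}\|_{\widetilde{L}^{\theta_{2}}_{T}(B^{\sigma}_{2,1})},
\end{equation*}
with $c_q=\sum_{|q-q'|\le4}2^{q'(\sigma-1)}\|\Delta_{q'}\nabla\varrho\|_{L^{\theta_1}_T(L^2)}/\big(9\|\nabla\varrho\|_{\widetilde L^{\theta_1}_T(B^{\sigma-1}_{2,1})}\big)$ (and analogous terms), which by construction satisfies $\|(c_q)\|_{\ell^1}\le1$. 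The second inequality of \eqref{R-E40} is obtained verbatim by exchanging the roles of $\varrho$ and $\mathbf v$, i.e. by invoking the second line of \eqref{R-E39} in place of the first.

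I expect the main obstacle to be purely bookkeeping: one has to make sure that the $L^\infty$-in-$x$ factors that appear in the proof of Proposition \ref{prop6.1} (such as $\|S_{q'-1}\partial_j g^j\|_{L^{p_1}}$ with $p_1=\infty$, bounded via Bernstein by $\|\nabla g\|_{B^0_{\infty,1}}$ or by $\|g\|_{B^\sigma_{2,1}}$ through the embedding $B^{\sigma-1}_{2,1}\hookrightarrow B^{d/2}_{2,1}\hookrightarrow L^\infty$) are the ones whose time-exponent is paired correctly under H\"older, and that the Chemin--Lerner norm on the right is taken \emph{after} summing in $q'$, not before. Since $\sigma=1+d/2$ with $p_2=2$, $p=2d/(d+2)$, $r=1$ all fall in the admissible range of Proposition \ref{prop6.1}, no borderline case needs separate treatment, and the constant $C$ depends only on $\sigma$ and $d$ as claimed. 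Thus the corollary follows by combining Corollary \ref{cor6.1}, H\"older's inequality in $t$, and Remark \ref{rem2.2}.
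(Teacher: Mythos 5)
Your proposal is correct and matches the route the paper sketches in its one-sentence justification before the corollary (``According to H\"older inequality and Remark \ref{rem2.2}\ldots''); your write-up simply supplies the details the paper omits. In particular, you have correctly spotted the one genuine subtlety that the paper glosses over: taking the $L^\theta_T$ norm of the \emph{product} bound from Corollary \ref{cor6.1} would leave a time-dependent normalizing sequence $c_q(t)$ and ordinary $L^{\theta_i}_T(B^s_{2,1})$ norms on the right, and there is no reason for $\sup_t c_q(t)$ to be $\ell^1$-summable. Your resolution --- returning to each frequency-localized piece $F^i_q(t)$ in the proof of Proposition \ref{prop6.1}, applying H\"older in $t$ at the level of the finite sums over $|q-q'|\le 4$ (or $\le 1$), and only then normalizing by the Chemin--Lerner norm to define a \emph{time-independent} $c_q\in\ell^1$ --- is exactly the correct maneuver, and it is the only way to produce $\widetilde{L}^{\theta_i}_T$ norms with a summable sequence. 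Your observation that the $L^\infty$-type factors such as $\|S_{q'-1}\partial_j g^j\|_{L^{p_1}}$ must be paired under H\"older with the complementary time exponent, and that the sum in $q'$ must precede the $q$-summation, is also the right bookkeeping. No gap.
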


\begin{rem}
Actually, if we take $p=p_{2}, \ p_{1}=\infty,\ \sigma=1+d/p$\ \ and
$r=1$ in Proposition \ref{prop6.1}, we can also deduce Lemma
\ref{lem2.3}.
\end{rem}

\section*{Acknowledgments}
The research of Jiang Xu is partially supported by the NSFC
(11001127), China Postdoctoral Science Foundation (20110490134) and
NUAA Research Funding (NS2010204). The research of Zejun Wang is
partially supported by the NSFC(10901082) and China Postdoctoral
Science Foundation (20090450149).

\end{document}